 \newtheorem{thm}{Theorem}[section]
 \newtheorem{cor}[thm]{Corollary}
 \newtheorem{lem}[thm]{Lemma}
 \newtheorem{prop}[thm]{Proposition}
 \theoremstyle{definition}
 \newtheorem{rem}[thm]{Remark}
 \newtheorem{rems}[thm]{Remarks}
 \newtheorem{exa}[thm]{Example}
 \numberwithin{equation}{section}
\newcommand{\uni}{\mathrm{uni}}
\newcommand{\spec}{\sigma}
\newcommand{\sct}{\mathrm{se}}
\newcommand{\op}{\mathrm{op}}
\newcommand{\ancgen}[1]{\langle#1\rangle}
\newcommand{\bp}{\mathrm{bp}}
\def\cent{\calZ}
\def\BMb{\calM_\mathrm{b}}
\newcommand{\vanish}[1]{\relax}
\newcommand{\beq}{\begin{equation}}
\newcommand{\eeq}{\end{equation}}
\newcommand{\defiff}{\stackrel{\text{\rm def}}{\Longleftrightarrow}}
\newcommand{\Ball}{\mathrm{Ball}}
\newcommand{\hs}{\hskip-0.1em}
\newcommand{\set}[1]{\hs\left[\,#1\,\right]}
\newcommand{\bfz}{\mathbf{z}}
\newcommand{\Clo}{\mathcal{C}}
\newcommand{\prfnoi}{\smallskip\noindent}
\newcommand{\upi}{\pi}
\newcommand{\emdf}{\bf}
\newcommand{\Id}{\mathrm{I}}
\newcommand{\suchthat}{\,\,|\,\,}
\DeclareMathOperator{\calL}{\mathcal{L}}
\DeclareMathOperator{\calN}{\mathcal{N}}
\newcommand{\calM}{\mathcal{M}}
\DeclareMathOperator{\calA}{\mathcal{A}}
\DeclareMathOperator{\calB}{\mathcal{B}}
\DeclareMathOperator{\calC}{\mathcal{C}}
\DeclareMathOperator{\calD}{\mathcal{D}}
\newcommand{\calE}{\mathcal{E}}
\DeclareMathOperator{\calF}{\mathcal{F}}
\DeclareMathOperator{\calG}{\mathcal{G}}
\DeclareMathOperator{\calT}{\mathcal{T}}
\DeclareMathOperator{\calS}{\mathcal{S}}
\DeclareMathOperator{\calZ}{\mathcal{Z}}
\newcommand{\N}{\mathbb{N}}
\newcommand{\R}{\mathbb{R}}
\newcommand{\C}{\mathbb{C}}
\newcommand{\K}{\mathbb{K}}
\newcommand{\ud}{\mathrm{d}}
\newcommand{\ue}{\mathrm{e}}
\newcommand{\ui}{\mathrm{i}}
\newcommand{\eM}{\mathrm{M}}
\newcommand{\Ha}{\mathrm{H}}
\newcommand{\vphi}{\varphi}
\newcommand{\sector}[1]{\mathrm{S}_{#1}}
\newcommand{\Lap}{\mathcal{L}}
\newcommand{\res}[1]{|_{#1}}
\newcommand{\Sum}[2][\relax]{%
 \ifx#1\relax \sideset{}{_{#2}}\sum 
 \else \sideset{}{^{#1}_{#2}}\sum
 \fi}
\newcommand{\car}{\mathbf{1}}
\DeclareMathOperator{\re}{Re}
\newcommand{\konj}[1]{\overline{#1}}
\newcommand{\abs}[1]{\vert #1 \vert}
\DeclareMathOperator{\Mer}{\mathcal{M}}
\newcommand{\Ce}{\mathrm{C}}
\newcommand{\Cb}{\mathrm{C}_{\mathrm{b}}}
\newcommand{\Ell}[1]{\mathrm{L}_{#1}}
\newcommand{\Meas}{\mathcal{M}}
\newcommand{\resolv}{\varrho}
\DeclareMathOperator{\reg}{reg}
\DeclareMathOperator{\bdd}{bdd}
\newcommand{\ohne}{\setminus}
\newcommand{\leer}{\emptyset}
\newcommand{\dann}{\Rightarrow}
\newcommand{\gdw}{\Leftrightarrow}
\newcommand{\Gdw}{\,\,\Longleftrightarrow\,\,}
\newcommand{\nach}{\circ}
\DeclareMathOperator{\dom}{dom}
\DeclareMathOperator{\ran}{ran}
\newcommand{\cls}[1]{\overline{#1}}
\newcommand{\rand}{\partial}
\newcommand{\spann}{\mathrm{span}}
\newcommand{\BL}{\mathcal{L}}
\newcommand{\norm}[2][\relax]{%
   \ifx#1\relax \ensuremath{\lVert#2\rVert}
   \else \ensuremath{\left\Vert#2\right\Vert_{#1}}
   \fi}
\newcommand{\sprod}[2]{\ensuremath{%
  \setbox0=\hbox{\ensuremath{#2}}
  \dimen@\ht0
  \advance\dimen@ by \dp0
  \left[ #1\rule[-\dp0]{0pt}{\dimen@}, #2\hspace{1pt}\right]}}
\newcommand{\bsprod}[2]{\ensuremath{%
  \setbox0=\hbox{\ensuremath{#2}}
  \dimen@\ht0
  \advance\dimen@ by \dp0
  \bigl[ #1\rule[-\dp0]{0pt}{\dimen@}, #2\hspace{1pt}\bigr]}}
\newcommand{\dprod}[2]{\ensuremath{\langle#1,#2\rangle}}
\newcommand{\fourier}[1]{\widehat{#1}}
\newcounter{aufzi}
\newenvironment{aufzi}{\begin{list}{ {\upshape\alph{aufzi})}}{
        \usecounter{aufzi}
        \topsep1ex
        \parsep0cm
        \itemsep1ex
        \leftmargin0.8cm
        \labelwidth0.5cm
        \labelsep0.3cm
}}
{\end{list}}
\newcounter{aufzii}
\newenvironment{aufzii}{\begin{list}{\hfill {\upshape 
(\roman{aufzii})}}{
        \usecounter{aufzii}
        \topsep1ex
        \parsep0cm
        \itemsep1ex
        \leftmargin0.8cm
        \labelwidth0.5cm
        \labelsep0.3cm
         \itemindent0cm
}}
{\end{list}}
\newcounter{aufziii}
\newenvironment{aufziii}{\begin{list}{ {\upshape\arabic{aufziii})}}{
        \usecounter{aufziii}
        \topsep1ex
        \parsep0cm
        \itemsep1ex
        \leftmargin0.8cm
        \labelwidth0.5cm
        \labelsep0.3cm
}}
{\end{list}}
\newcommand{\cnt}{\mathrm{Z}}
\begin{document}
%
%
%
%
%
%
%
%
%
\title{On the Fundamental Principles of Unbounded Functional Calculi}



\author[Markus Haase]{Markus Haase}

\address{%
Kiel University\\
Mathematisches Seminar\\
Ludewig-Meyn-Str.4\\
42118 Kiel, Germany}

\email{haase@math.uni-kiel.de}


\subjclass{Primary 47D06}

\keywords{functional calculus, algebraic extension, topological
  extension, sectorial operator, Hirsch calculus, Stieltjes algebra, Hille--Phillips, spectral theorem, }

\date{\today}
\dedicatory{Dedicated to Wolfgang Arendt and Lutz Weis on the occasion
of their 70th birthdays}

\begin{abstract}
In this paper, a new axiomatization for unbounded functional
calculi is proposed and the associated theory is elaborated,
comprising, among others,  uniqueness and compatibility results and
extension theorems of algebraic and topological nature. 
 In contrast to earlier approaches, no commutativity assumptions
need to be made about the underlying algebras. 

In a second part, the abstract theory is illustrated in  familiar
situations (sectorial operators, semigroup generators). 
New topological extension theorems are proved for
the sectorial calculus and the Hille--Phillips calculus. 
Moreover, it is shown that the Stieltjes and the Hirsch calculus
for sectorial operators are subcalculi of a (small) topological
extension of the sectorial calculus. 
\end{abstract}

\maketitle

\section{Introduction}\label{s.intro}

Some instances of functional calculi are as old as modern mathematics.
The very concept of a functional calculus, however, even nowadays 
still remains partly heuristic, with no definitive and widely accepted 
precise mathematical definition. One reason for this situation
lies in the variety of instances such a definition has to cover. In
particular those calculi are of interest, where unbounded operators are not just
the starting point but the target. 
These calculi are called {\em unbounded} in the following, in
order to distinguish them from the {\em bounded} calculi,  which
by (our) definition are those that yield only bounded operators.
(In particular, we do not intend any continuity or suppose any
topology when we speak of a bounded calculus here.)

To illustrate our terminology, consider the
Borel calculus of a normal operator $A$ on a Hilbert space. 
Even if $A$ is unbounded, the calculus $f\mapsto f(A)$ 
yields bounded operators as long as $f$ is a bounded function. Hence, 
restricting the Borel calculus to bounded functions yields
a bounded calculus in our terminology. In contrast to that,
even if $A$ is bounded, there are Borel functions $f$ such that 
$f(A)$ is unbounded. Hence, the full Borel calculus is an unbounded
functional calculus.

Now, there is little controversy about what a bounded functional
calculus should be, namely  an algebra homomorphism
$\Phi: \calF \to \BL(X)$ where $\calF$ is an algebra and $X$ is a
Banach space. Of course, there are obvious variations possible
concerning, e.g.,  the properties of $\calF$ (unital? commutative? a function algebra?)
or the space $X$ (just locally convex?) or topological requirements
for $\calF$ and $\Phi$. All these, however,  are somehow
unproblematic because the whole situation lies still within the terminological
realm of classical representation theory.
In contrast, unbounded functional calculi lie beyond that realm, as
the set of unbounded operators on a Banach space is not an algebra any
more. There is simply no classical terminological framework to cover
unbounded calculi.

Surprisingly, despite the  ever-growing
importance of the functional calculus for sectorial operators
since its inception by McIntosh \cite{McI86} in 1986 and subsequently of 
related calculi on strips, parabolas and other regions, 
there 
have been only a few authors (most notable: deLaubenfels \cite{deLaubenfels1995})
showing a strong will to
operate  with a reasonably abstract definition of a functional
calculus.


The first attempt, to the best of our knowlegde, to not
just give an axiomatic definition of an unbounded  functional 
calculus but
also to develop the associated abstract theory 
is due to the author of the present article. It was 
published in \cite{Haa05b} and then incorporated in 
and made widely known through the book \cite{HaaseFC}.  However, 
although not without merit and already quite abstract,
it has some shortcomings, which we adress in the following.

A  {\em first shortcoming} is that  
the definition of a functional calculus given there is intimately 
tied to a construction, algebraic extension by regularization. 
At the time, when the book was written, 
this was natural: algebraic extension
is a central tool, an elegant and easily manageable way of
reducing an unbounded calculus to its bounded part. Nevertheless,
from an advanced point of view it should be  obvious that 
a definition based on a specific construction 
cannot be regarded the definite answer to the axiomatization problem.

A {\em second shortcoming} of the framework from \cite{HaaseFC} 
is that only commutative algebras $\calF$ were allowed. 
Admittedly, the field of applications up to now 
almost exclusively involves
commutative algebras (algebras of scalar functions). But
in the future, genuinely non-commutative situations like
functional calculi arising from Lie group representations  will
become more and more important. Hence, there is a desire
to have a setting that does not rely on commutativity. 

Finally, a {\em third shortcoming} of the approach from 
\cite{HaaseFC} is that  topological ways of extending a functional
calculus are disregarded. (This is of course not a failure of
the axiomatic framing itself, but of its theoretical elaboration.) 
Actually, the need for such extensions had been formulated already in
\cite{Haa05b} and there was also a somewhat halfhearted attempt 
to provide them,  but that did not find much resonance.

With the present article, we are making a new attempt to 
find an adequate axiomatization of the notion of an (unbounded)
functional calculus and to develop its theory while 
avoiding the named shortcomings.

\bigskip

The paper is divided into two parts. The first part
is devoted to  the elaboration of the theory. As such, it is quite
abstract and sometimes technical (due to the lack of commutativity
assumptions). The second illustrates
the theory in some familiar situations, but with a stress on 
formerly unknown aspects, mainly regarding 
topological extensions. A reader chiefly interested in the second part
may safely skip the technical sections of the first part for the
time being and only revert to them when necessary. In the following
we give a short synopsis of the two parts.

\medskip
The first part starts with  the axioms of a calculus 
and their immediate consequences (Section \ref{s.afc}) and then proceeds
with the introduction of basic auxiliary notions like 
{\em determination}, {\em algebraic cores} (Section \ref{s.det}), 
and {\em anchor sets} (Section \ref{s.anc}).
The main theoretical problem  here consists in providing
criteria ensuring that an anchor set is actually determining. 
Whereas this is almost trivially true in a commutative situation (Theorem \ref{anc.t.com}),
some work is necessary to find such criteria without commutativity
(Theorem \ref{anc.t.main}). 
This dichotomy permeates also the subsequent
Section \ref{s.urc}, where the problems of uniqueness and
compatibility
are addressed. 

In Section \ref{s.ext} we discuss the {\em algebraic extension}. 
Surprisingly, 
no commutativity hypothesis whatsoever is needed to make
algebraic extension work (Theorems \ref{ext.t.ext} and
\ref{ext.t.ancgen}).  However, compatibility of successive extensions cannot be guaranteed
without additional assumptions (Theorem \ref{ext.t.succ-comp}).

In Section \ref{s.api} we briefly touch upon {\em approximate identities}.
This had been missed out in \cite{HaaseFC}, a first abstract result
was given by Clark \cite{Clark2009}. In Section \ref{s.dua} we introduce
the concept of a {\em dual calculus}.  In Section \ref{s.top} we discuss
{\em topological extensions}. This concludes the first part.

\medskip
In the second part, we illustrate the theory with some
familiar examples: sectorial operators (Section \ref{s.sec}), 
semigroup generators (Section \ref{s.sgr}) and 
normal operators (Section \ref{s.spt}). We provide new
topological extension theorems for the sectorial calculus
(Section \ref{s.sectop}) and the Hille--Phillips calculus 
(Section \ref{sgr.s.topext}).
In the sectorial case, we show how these topological extensions
covers calculi defined in the literature like 
the Stieltjes calculus and the  Hirsch calculus (Section \ref{s.hir}).
For generators of bounded semigroups we show that  the Hille--Phillips
calculus  is included in a certain topological extension 
of the sectorial calculus (Section \ref{sgr.s.sec-sgr}). 
In the Section \ref{s.spt}  on  normal operators on Hilbert space we 
report on a consistent functional calculus approach to the spectral
theorem (elaborated in \cite{Haase2020bpre}). 

\subsection*{Notation and Terminology}

We use the letters $X,Y, \dots$ generically to denote Banach spaces.
By default and unless otherwise stated, the scalar field is $\K =\C$.
The space of bounded linear operators from $X$ to $Y$ is denoted by
$\BL(X;Y)$, and $\BL(X)$ if $X= Y$.

A subset $\calD
\subseteq \BL(X)$ is called {\emdf point-separating} if
\[ \bigcap_{D\in \calD} \ker(D) = \{0\}.
\]
A (closed) linear relation in $X$ is any (closed) 
linear subspace $A\subseteq X \oplus X$. Linear relations
are called {\em multi-valued operators} in \cite[Appendix A]{HaaseFC}, and we use
freely the definitions
and results from that reference. In particular, we say
that a bounded operator $T$ {\emdf commutes} with
a linear relation $A$ if $TA \subseteq AT$, which is equivalent
to 
\[ (x,y) \in A \dann (Tx, Ty) \in A.
\]
If $\calE$ is a (multiplicative) semigroup, we denote its {\emdf center}
by
\beq\label{intro.e.center} 
\cnt(\calE) = \{ d\in \calE \suchthat \forall \, e\in \calE : de =
ed\}.
\eeq
If $\calF$ is a semigroup and $\calE\subseteq \calF$ is any subset, we
shall frequently use the notation
\[ [f]_\calE := \{ e\in \calE \suchthat ef \in \calE\}
 \qquad (f\in \calF).
\]

\part{Abstract Theory}

In the first part of this article, we treat the theory of functional
calculus in an abstract, axiomatic fashion. We aim at generality,
 in particular we do not make any standing commutativity assumption.
However, we emphasize that commutativity of the algebras 
greatly  simplifies theory and proofs.

\section{Axioms for Functional Calculi}\label{s.afc}

Let $\calF$ be an algebra with a unit element $\car$ 
and let $X$ be a Banach space.  A mapping
\[ \Phi: \calF \to  \Clo(X)
\]
from $\calF$ to the set of closed operators on $X$
is called a {\emdf proto-calculus} (or: 
$\calF$-proto-calculus) on $X$ 
if the following axioms are satisfied ($f,\: g\in \calF$, $\lambda \in \C$):
\begin{aufzi}
\item[\quad(FC1)] $\Phi(\car) = \Id$.
\item[\quad (FC2)] $\lambda \Phi(f) \subseteq \Phi(\lambda f)$
\quad  and \quad $\Phi(f) + \Phi(g) \subseteq \Phi(f +g)$.
\item[\quad (FC3)] $\Phi(f)\Phi(g) \subseteq \Phi(fg)$\quad and\quad
\[ 
\dom(\Phi(f)\Phi(g)) = \dom(\Phi(g)) \cap \dom(\Phi(fg)).
\]
\end{aufzi}
A proto-calculus $\Phi: \calF \to \Clo(X)$ 
is called a {\emdf calculus} if the following fourth axiom is
satisfied:
\begin{aufzi}
\item[\quad(FC4)] 
The set $\bdd(\calF, \Phi)$ of $\Phi$-bounded
elements is determining $\Phi$ on $\calF$.
\end{aufzi}
Here, an element $f\in \calF$ is called {\emdf $\Phi$-bounded}
if $\Phi(f)\in \BL(X)$, and the set of $\Phi$-bounded elements is
\[ \bdd(\Phi) := \bdd(\calF,\Phi) := \{ f\in \calF \suchthat \Phi(f) \in \BL(X)\} = \Phi^{-1}(\BL(X)).
\]

\medskip
The terminology and meaning of Axiom (FC4) shall be explained in
Section \ref{s.det}  below.  For the time  being we only 
suppose that $\Phi: \calF \to \Clo(X)$ is
a proto-calculus.
The following theorem summarizes its basic properties.

\begin{thm}\label{afc.t.pro-cal}
Let $\Phi: \calF \to \Clo(X)$ be a proto-calculus 
on a Banach space $X$. 
Then the following assertions hold ($f,\:g \in \calF$, $\lambda \in \C$):
\begin{aufzi}
\item If $\lambda \neq 0$ or $\Phi(f) \in \BL(X)$ then $\Phi(\lambda f) = \lambda \Phi(f)$.
\item If $\Phi(g)\in \BL(X)$ then
\[  \Phi(f) + \Phi(g) = \Phi(f+g)\quad \text{and}\quad \Phi(f)\Phi(g)
= \Phi(fg).
\]
\item If $fg= \car$ then $\Phi(g)$ is injective and
  $\Phi(g)^{-1} \subseteq \Phi(f)$. If, in addition, $fg=gf$, 
then $\Phi(g)^{-1} =\Phi(f)$. 

\item The set $\bdd(\calF,\Phi)$ of $\Phi$-bounded elements is a unital
  subalgebra of $\calF$ and 
\[ \Phi: \bdd(\calF,\Phi) \to \BL(X)
\]
is an algebra homomorphism. 
 \end{aufzi}
\end{thm}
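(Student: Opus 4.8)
The plan is to verify the four assertions in order, relying only on the proto-calculus axioms (FC1)--(FC3) and the algebra of closed operators and linear relations. Throughout, the main structural fact to keep in mind is that the inclusions in (FC2) and (FC3) become equalities exactly when domains force them to, so the whole proof is an exercise in chasing domains. I expect the domain condition in (FC3) to be the workhorse: it is the only axiom that gives an \emph{exact} description of a domain rather than a mere inclusion, and so it is the tool that upgrades inclusions to equalities.

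\smallskip
For assertion (a), I would argue as follows. If $\lambda \neq 0$, apply (FC2) both to the pair $(\lambda, f)$ and to the pair $(\lambda^{-1}, \lambda f)$ to get $\lambda \Phi(f) \subseteq \Phi(\lambda f)$ and $\lambda^{-1}\Phi(\lambda f) \subseteq \Phi(\lambda^{-1}\lambda f) = \Phi(f)$; multiplying the second inclusion by $\lambda$ reverses it, yielding $\Phi(\lambda f) \subseteq \lambda \Phi(f)$, whence equality. The case $\lambda = 0$ with $\Phi(f) \in \BL(X)$ is handled separately: here $\lambda f = 0$, and one checks via (FC1)--(FC3) (e.g. $\Phi(0) = \Phi(0 \cdot \car)$, or directly that $0\cdot \Phi(f)$ is the zero operator defined everywhere) that $\Phi(0) = 0$ as a bounded operator; the boundedness of $\Phi(f)$ guarantees $\lambda \Phi(f) = 0$ is everywhere defined, matching $\Phi(\lambda f)$. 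I expect the mild subtlety is that for $\lambda = 0$ one genuinely needs $\Phi(f)$ bounded so that $0 \cdot \Phi(f)$ has full domain.

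\smallskip
For assertion (b), the point is that a bounded $\Phi(g)$ has domain all of $X$, which removes the domain obstruction. For the additive claim, (FC2) gives $\Phi(f) + \Phi(g) \subseteq \Phi(f+g)$; since $\dom(\Phi(g)) = X$ the left side has domain $\dom(\Phi(f))$, and applying the inclusion to the pair $(f+g, -g)$ together with part (a) (using $\Phi(-g) = -\Phi(g)$ bounded) gives the reverse, so $\dom(\Phi(f+g)) \subseteq \dom(\Phi(f))$ and equality follows. For the multiplicative claim, the domain condition in (FC3) reads $\dom(\Phi(f)\Phi(g)) = \dom(\Phi(g)) \cap \dom(\Phi(fg)) = \dom(\Phi(fg))$ because $\dom(\Phi(g)) = X$; combined with the inclusion $\Phi(f)\Phi(g) \subseteq \Phi(fg)$ this forces $\Phi(f)\Phi(g) = \Phi(fg)$. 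This is the cleanest illustration of the exact-domain axiom doing its job.

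\smallskip
For assertion (c), suppose $fg = \car$. By (FC3), $\Phi(f)\Phi(g) \subseteq \Phi(fg) = \Phi(\car) = \Id$ using (FC1); since $\Id$ is injective and $\Phi(f)\Phi(g) \subseteq \Id$, the factor $\Phi(g)$ must be injective and $\Phi(g)^{-1} \subseteq \Phi(f)$, reading the containment as one of linear relations. If moreover $gf = fg = \car$, then by symmetry $\Phi(g)\Phi(f) \subseteq \Id$ as well, giving $\Phi(f)^{-1} \subseteq \Phi(g)$, i.e. $\Phi(f) \subseteq \Phi(g)^{-1}$; combined with $\Phi(g)^{-1} \subseteq \Phi(f)$ this yields $\Phi(g)^{-1} = \Phi(f)$. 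Finally, for assertion (d), parts (a) and (b) already show that $\bdd(\calF,\Phi)$ is closed under scalar multiplication, addition, and products (boundedness of $\Phi(f)\Phi(g) = \Phi(fg)$ when both factors are bounded), that it contains $\car$ by (FC1), and that $\Phi$ restricted to it is linear and multiplicative, hence an algebra homomorphism into $\BL(X)$. The only genuine obstacle in the whole theorem is bookkeeping the linear-relation conventions in (c) so that ``inverse'' and ``inclusion'' are interpreted consistently; everything else is a direct application of the exact-domain clause of (FC3).
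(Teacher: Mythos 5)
Your parts a), b), and d) are correct and follow essentially the same route as the paper (in a) you are in fact more explicit than the paper about the case $\lambda = 0$, $\Phi(f)\in\BL(X)$). The gap is in part c), at the step where you claim that $\Phi(g)^{-1}\subseteq\Phi(f)$ follows ``since $\Id$ is injective and $\Phi(f)\Phi(g)\subseteq\Id$''. Injectivity of $\Phi(g)$ does follow from the inclusion alone: if $\Phi(g)x=\Phi(g)x'$, then $\Phi(g)(x-x')=0\in\dom(\Phi(f))$, so $x-x'=\Phi(f)\Phi(g)(x-x')=0$. But the relation inclusion $\Phi(g)^{-1}\subseteq\Phi(f)$ asserts that for \emph{every} $x\in\dom(\Phi(g))$ one has $\Phi(g)x\in\dom(\Phi(f))$ and $\Phi(f)\Phi(g)x=x$; the hypothesis $\Phi(f)\Phi(g)\subseteq\Id$ gives this only for those $x$ whose image $\Phi(g)x$ happens to lie in $\dom(\Phi(f))$, and it says nothing about the range condition $\ran(\Phi(g))\subseteq\dom(\Phi(f))$. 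A counterexample at the level of closed operators: on $X=\C^2$ take $G=\Id$ and $F=\Id\res{V}$ for a one-dimensional subspace $V$; then $FG=\Id\res{V}\subseteq\Id$ and $G$ is injective, yet $G^{-1}=\Id\not\subseteq F$. So the two facts you cite are not enough to justify the conclusion.

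What closes the gap is precisely the tool you yourself single out as the workhorse of the theorem and then do not use at this one spot: the domain-equality clause of (FC3). Since $\Phi(fg)=\Phi(\car)=\Id$ has domain $X$, that clause yields $\dom(\Phi(f)\Phi(g))=\dom(\Phi(g))\cap\dom(\Id)=\dom(\Phi(g))$, i.e. $\ran(\Phi(g))\subseteq\dom(\Phi(f))$. Only with this in hand does $\Phi(f)\Phi(g)\subseteq\Id$ give $\Phi(f)\Phi(g)x=x$ for \emph{all} $x\in\dom(\Phi(g))$, which is exactly the statement $\Phi(g)^{-1}\subseteq\Phi(f)$. (The paper's proof is terse here as well, but it invokes (FC3) as a whole, which contains the domain equality.) With this one-line repair, your symmetry argument for the case $fg=gf$, and hence the rest of the proof, goes through unchanged.
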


\begin{proof}
a)\ One has 
\[ \Phi(f) = \Phi(\lambda^{-1} \lambda f) \supseteq 
\lambda^{-1} \Phi(\lambda f)\supseteq \lambda^{-1} \lambda \Phi(f)
=\Phi(f).
\]
Hence, all inclusions are equalities, and the assertion follows.

\prfnoi
b)\ By Axiom (FC2) and a)
\begin{align*}
 \Phi(f) &= \Phi(f +g - g) \supseteq \Phi(f+g) + \Phi(-g)
= \Phi(f+g) - \Phi(g) 
\\ & \supseteq \Phi(f) + \Phi(g) - \Phi(g)
= \Phi(f).
\end{align*}
Hence, all inclusions are equalities and the first  assertion in b)
follows. For the second, note that by Axiom (FC3)
$\Phi(f)\Phi(g) \subseteq \Phi(fg)$ with 
\[ \dom(\Phi(f)\Phi(g)) = \dom(\Phi(g)) \cap \dom(\Phi(fg))
=
\dom(\Phi(fg)),
\]
hence we are done. 

\prfnoi
c)\ By (FC3), if $fg= \car$ then $\Phi(f)\Phi(g) \subseteq \Phi(fg) = \Phi(\car)
= \Id$. Hence, $\Phi(g)$ is injective and $\Phi(f)\supseteq
\Phi(g)^{-1}$. If $fg= gf$, by symmetry 
$\Phi(f)$ is injective too, and  $\Phi(g) \supseteq
\Phi(f)^{-1}$. This yields $\Phi(f) = \Phi(g)^{-1}$ as desired. 

\prfnoi
d) follows directly from b). 
\end{proof}

\section{Determination}\label{s.det}

We shall write 
\beq\label{det.e.[f]}   [f]_\calE := \{ e\in \calE \suchthat ef\in \calE\}
\eeq
whenever $\calF$ is any multiplicative semigroup, $\calE \subseteq
\calF$ and $f\in \calF$. 
In our context, $\calF$ shall always be an
algebra. 

\medskip
Given a proto-calculus $\Phi: \calF \to \Clo(X)$ and $f\in \calF$ the 
set of its {\emdf $\Phi$-regularizers} is
\[ \reg(f, \Phi) := [f]_{\bdd(\calF, \Phi)} = \{ e\in \calF \suchthat e, \, 
ef \in \bdd(\calF,\Phi)\}.
\]
By Theorem \ref{afc.t.pro-cal}, $\reg(f, \Phi)$ is a left ideal 
in $\bdd(\calF,\Phi)$. The elements of the set
\[ \reg(\Phi):=  \reg(\calF, \Phi) := \bigcap_{f\in \calF} \reg(f,\Phi)
\]
are called   {\emdf universal regularizers}. Of course, it may
happen that $e= 0$ is the only universal regularizer.

\begin{rem}\label{det.r.reg-old}
The definition of a  regularizer here differs from and is more
general than the one given in \cite{HaaseFC}. 
It has been argued in \cite{Haase2008pre} (eventually published as \cite{Haase2017}) 
that  such a relaxation of terminology is useful.
\end{rem}

Let $f\in \calF$. A  subset $\calM \subseteq \bdd(\calF,\Phi)$  
is said to {\emdf determine $\Phi(f)$} if
\begin{equation}\label{afc.e.determining}  
\Phi(f)x = y \quad \Gdw\quad 
\forall e \in
\calM  \cap \reg(f, \Phi):\,\,  \Phi(ef)x = \Phi(e)y
\end{equation}
for all $x,\: y\in X$. And $\calM$ is said to {\emdf strongly
  determine}
$\Phi(f)$ if the set $[f]_{\calM}$ determines $\Phi(f)$, i.e. if
\begin{equation}\label{afc.e.strong-determining}  
\Phi(f)x = y \quad \Gdw\quad 
\forall e \in [f]_{\calM} :\,\,  \Phi(ef)x = \Phi(e)y
\end{equation}
for all $x,\: y\in X$.

\begin{rems}\label{det.r.det-super}
\begin{aufziii}
\item Although very useful, the terminology ``$\calM$ determines
  $\Phi(f)$'' is to be used with caution: there might be $g\in
  \calF$ with $f\neq g$ and  $\Phi(g) = \Phi(f)$ and such that $\Phi(g)$ is not determined by
$\calM$ in the above sense. (In other words, the expression ``$\Phi(f)$'' has
  to be interpreted symbolically here, and not as a mathematical object.) 
With this caveat in mind, there should be little danger of
 confusion. However, if we want to be completely accurate, we shall
use the alternative formulation ``{\emdf $\calM$ determines $\Phi$ at
  $f$}''.

\item 
We observe  that in  
both equivalences \eqref{afc.e.determining}  and \eqref{afc.e.strong-determining}  
only the implication ``$\Leftarrow$'' is relevant, as 
the implication ``$\dann$'' simply means
$\Phi(e)\Phi(f) \subseteq \Phi(ef)$  
for $e\in \calM\cap \reg(f,\Phi)$ or $e\in [f]_{\calM}$,
respectively; and this follows from Axiom (FC3).

An immediate consequence of this observation is that a strongly determining
set is determining.
\end{aufziii}
\end{rems}

A set $\calE \subseteq \bdd(\calF,\Phi)$ 
is said to be {\emdf determining $\Phi$ on} or is {\emdf
  $\Phi$-determining for} $\calF$ 
if it determines $\Phi(f)$ for each $f\in \calF$.
(If $\calF$ is understood, reference to it
is often dropped, and one simply speaks of $\Phi$-determining sets.)
Note that by Remark \ref{det.r.det-super}, {\em any superset of a determining
set is again determining}.

A subset $\calE$ of $\bdd(\calF, \Phi)$ is called an {\emdf algebraic core}
for $\Phi$ on  $\calF$ if $\calE$  strongly determines 
$\Phi(f)$ for each $f\in \calF$. 
In this terminology, Axiom (FC4) simply requires $\bdd(\calF, \Phi)$
to be an algebraic core for $\Phi$ on $\calF$.  Again,
by Remark \ref{det.r.det-super}  {\em any superset of an algebraic core
is also an algebraic core}.

\begin{exa}[Bounded (Proto-)Calculi] 
A proto-calculus $\Phi: \calF \to \Clo(X)$ with 
$\calF = \bdd(\calF, \Phi)$ is nothing else than 
a unital algebra representation $\Phi: \calF \to \BL(X)$. 
By unitality, Axiom (FC4) is automatically satisfied in
this situation. So each unital representation by bounded
operators is a calculus.
\end{exa}

\section{Anchor Sets}\label{s.anc}



Let $\calE$ be a set and let $\Phi: \calE \to \BL(X)$ be any mapping.
An element $e\in \calE$ is called an
{\emdf anchor element} if $\Phi(e)$ is injective.
More generally, a subset $\calM \subseteq \calE$
is called an {\emdf anchor set} if $\calM \neq \leer$ and 
the set $\{ \Phi(e) \suchthat e\in \calM\}$ is point-separating, i.e., if 
\[ \bigcap_{e\in \calM} \ker(\Phi(e)) = \{0\}.
\]
If $\calF$ is a semigroup, then we  say that $f\in \calF$ is {\emdf anchored} in $\calE$
(with respect to  $\Phi$) if the set $[f]_\calE$
is an anchor set. And we call $\calF$ {\emdf anchored in $\calE$} if 
each $f\in \calF$ is anchored in $\calE$.  If we want to 
stress the dependence on $\Phi$, we shall speak of 
{\emdf $\Phi$-anchor elements/sets} and of elements/sets being {\emdf
  $\Phi$-anchored} in  $\calE$.

\medskip
Suppose that $\Phi: \calF \to \Clo(X)$ is a proto-calculus
and $f\in \calF$. Any set $\calM \subseteq \bdd(\Phi, \calF)$ which
determines $\Phi(f)$ must be an anchor set, just because
$\Phi(f)$ is an operator and not just a linear relation. 
The converse question, i.e., whether a particular anchor
set does actually determine $\Phi(f)$ is the subject
of the present section. We start with a simple case. 

\begin{thm}\label{anc.t.com}
Let $\Phi: \calF \to \Clo(X)$ be a calculus and let
$f\in\calF$. If $\calF$ is commutative and
$\calE \subseteq \reg(f, \Phi)$ is an anchor set, then 
$\calE$ determines $\Phi(f)$. 
\end{thm}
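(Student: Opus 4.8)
The plan is to prove only the implication ``$\Leftarrow$'' in \eqref{afc.e.determining}, since by Remark \ref{det.r.det-super} the reverse implication is an immediate consequence of Axiom (FC3). Because $\calE \subseteq \reg(f,\Phi)$ we have $\calE \cap \reg(f,\Phi) = \calE$, so the hypothesis to exploit is that $x,y\in X$ satisfy $\Phi(ef)x = \Phi(e)y$ for every $e\in\calE$, and the goal is $\Phi(f)x=y$. The strategy is to use Axiom (FC4): since $\bdd(\calF,\Phi)$ is an algebraic core, the relation \eqref{afc.e.strong-determining} with $\calM = \bdd(\calF,\Phi)$ and $[f]_{\bdd(\calF,\Phi)} = \reg(f,\Phi)$ shows that $\Phi(f)x=y$ holds \emph{as soon as} $\Phi(df)x = \Phi(d)y$ for every $d\in\reg(f,\Phi)$. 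Thus the whole problem reduces to bootstrapping the identity from the anchor set $\calE$ to all of $\reg(f,\Phi)$.

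The core computation is as follows. Fix $d\in\reg(f,\Phi)$, so that $\Phi(d)$ and $\Phi(df)$ are bounded, and let $e\in\calE$ be arbitrary, so that $\Phi(e)$ and $\Phi(ef)$ are bounded as well. Applying the bounded operator $\Phi(e)$ to $\Phi(df)x$ and using commutativity of $\calF$ together with Theorem \ref{afc.t.pro-cal}(b) (the relation $\Phi(a)\Phi(b)=\Phi(ab)$ holds whenever one factor is bounded), I would rewrite
\[
\Phi(e)\Phi(df)x = \Phi(edf)x = \Phi(d)\Phi(ef)x,
\]
and similarly
\[
\Phi(e)\Phi(d)y = \Phi(ed)y = \Phi(d)\Phi(e)y.
\]
Since $\Phi(ef)x = \Phi(e)y$ by hypothesis, the two right-hand sides agree after applying $\Phi(d)$, whence $\Phi(e)\bigl(\Phi(df)x - \Phi(d)y\bigr) = 0$ for every $e\in\calE$.

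Because $\calE$ is an anchor set, $\bigcap_{e\in\calE}\ker\Phi(e)=\{0\}$, and therefore $\Phi(df)x = \Phi(d)y$. As $d\in\reg(f,\Phi)$ was arbitrary, Axiom (FC4) yields $\Phi(f)x=y$, which completes the argument. The decisive step — and the only place where commutativity is genuinely used — is the rearrangement $\Phi(edf)=\Phi(d)\Phi(ef)$ and $\Phi(ed)=\Phi(d)\Phi(e)$, which allows the \emph{fixed} regularizer $\Phi(d)$ to be pulled outside so that the anchor hypothesis on the \emph{varying} $e$ can be invoked. Without commutativity one cannot move $d$ across the product $ef$ in this way, which is presumably exactly why the general case (Theorem \ref{anc.t.main}) demands substantially more work.
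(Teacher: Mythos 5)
Your proof is correct and is essentially the paper's own argument: the same commutativity-based rearrangement $\Phi(e)\Phi(df)x = \Phi(edf)x = \Phi(d)\Phi(ef)x = \Phi(d)\Phi(e)y = \Phi(e)\Phi(d)y$ (using Theorem \ref{afc.t.pro-cal}(b) for products of $\Phi$-bounded elements), followed by the anchor property to conclude $\Phi(df)x=\Phi(d)y$ for every $d\in\reg(f,\Phi)$, and finally Axiom (FC4). The only difference is presentational — you split the chain of equalities in two and spell out the reduction via (FC4) at the start rather than at the end.
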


\begin{proof}
Let $x,y\in X$ such that $\Phi(ef)x = \Phi(e)y$ for
all $e\in \calE$. Then for all $e\in \calE$ and $g\in \reg(\Phi, \calF)$ we have
\begin{align*}
 \Phi(e)\Phi(gf)x & = \Phi(egf)x =
\Phi(gef)x = \Phi(g) \Phi(ef)x = 
\Phi(g) \Phi(e)y = \Phi(ge)y \\ & = \Phi(eg)y = 
\Phi(e)\Phi(g)y.
\end{align*}
Since $\calE$ is an anchor set, it follows that 
$\Phi(gf)x = \Phi(g)y$ for all $g\in \reg(\Phi, \calF)$.
Since $\Phi$ is a calculus, this implies $\Phi(f)x = y$. 
\end{proof}

Without commutativity,
things become much more complicated. It actually
may come as a surprise that the following result
is true in general.

\begin{thm}\label{anc.t.main}
Let $\Phi: \calF \to \Clo(X)$ be  a calculus, and let
$\calE  \subseteq \bdd(\calF,\Phi)$ be such that 
$\bdd(\calF, \Phi)$ is anchored in $\calE$. 
Then $\calE$ determines $\Phi$ on $\calF$. If,
in addition, $\calE$ is multiplicative, then 
$\calE$ is an algebraic core for $\Phi$. 
\end{thm}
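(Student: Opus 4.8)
The plan is to use Axiom (FC4) to push everything down to the bounded subalgebra $\bdd(\calF,\Phi)$, and then to exploit the point-separation supplied by the anchoring hypothesis together with the fact that $\Phi$ is an algebra homomorphism on $\bdd(\calF,\Phi)$ (Theorem \ref{afc.t.pro-cal}(d)). In every ``determines'' statement only the implication ``$\Leftarrow$'' needs proof, so I fix $f\in\calF$ and $x,y\in X$ throughout and argue that direction. Recall also that $\reg(f, \Phi)\subseteq\bdd(\calF,\Phi)$, so that the determination condition for the core $\bdd(\calF,\Phi)$ (which holds by (FC4), an algebraic core being in particular determining) reads: $\Phi(f)x=y$ iff $\Phi(bf)x=\Phi(b)y$ for every $b\in\reg(f, \Phi)$.

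For the first assertion, assume $\Phi(ef)x=\Phi(e)y$ for all $e\in\calE\cap\reg(f, \Phi)$, and fix $b\in\reg(f, \Phi)$; I want $\Phi(bf)x=\Phi(b)y$. Since $b\in\bdd(\calF,\Phi)$, the hypothesis makes $[b]_\calE$ an anchor set, i.e.\ point-separating. For $e\in[b]_\calE$ one has $e,eb\in\calE$, and moreover $eb\in\reg(f, \Phi)$ because $eb\in\bdd(\calF,\Phi)$ and $(eb)f=e(bf)\in\bdd(\calF,\Phi)$; hence $eb\in\calE\cap\reg(f, \Phi)$. Applying the standing hypothesis to $eb$ gives $\Phi(ebf)x=\Phi(eb)y$, which by multiplicativity of $\Phi$ on $\bdd(\calF,\Phi)$ rewrites as $\Phi(e)\bigl(\Phi(bf)x-\Phi(b)y\bigr)=0$. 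As this holds for every $e\in[b]_\calE$ and that set is point-separating, $\Phi(bf)x=\Phi(b)y$, which is what was needed.

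For the second assertion I must show that $[f]_\calE$ determines $\Phi(f)$. By the first part it suffices to upgrade the weaker hypothesis ``$\Phi(ef)x=\Phi(e)y$ for all $e\in[f]_\calE$'' to ``$\Phi(e_0f)x=\Phi(e_0)y$ for all $e_0\in\calE\cap\reg(f, \Phi)$''. Fix such an $e_0$; then $e_0f\in\bdd(\calF,\Phi)$, so $[e_0f]_\calE$ is point-separating. For $e\in[e_0f]_\calE$ we have $e,\,e(e_0f)\in\calE$, and since $\calE$ is multiplicative also $ee_0\in\calE$; combined with $(ee_0)f=e(e_0f)\in\calE$ this shows $ee_0\in[f]_\calE$. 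The weaker hypothesis applied to $ee_0$ yields $\Phi(ee_0f)x=\Phi(ee_0)y$, i.e.\ $\Phi(e)\bigl(\Phi(e_0f)x-\Phi(e_0)y\bigr)=0$ by multiplicativity of $\Phi$ on $\bdd(\calF,\Phi)$. Point-separation of $[e_0f]_\calE$ gives $\Phi(e_0f)x=\Phi(e_0)y$, completing the upgrade, and the first part then concludes.

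The decisive point, and the main thing to get right, is \emph{which} element to anchor at each stage: one anchors exactly the element whose left-products with the members $e$ of its anchor set fall back into $\calE$ (respectively into $[f]_\calE$). For plain determination, anchoring $b$ rather than $bf$ is what forces $eb\in\calE$; for strong determination, closedness of $\calE$ under multiplication is precisely what makes $ee_0\in\calE$ and hence $ee_0\in[f]_\calE$. This explains structurally why multiplicativity is indispensable for the algebraic-core conclusion but superfluous for mere determination.
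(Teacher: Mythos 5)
Your proof is correct and takes essentially the same route as the paper's: fixing $b\in\reg(f,\Phi)$, anchoring $b$ (respectively $e_0f$) via the hypothesis, factoring $\Phi(ebf)x=\Phi(eb)y$ into $\Phi(e)\bigl(\Phi(bf)x-\Phi(b)y\bigr)=0$ through the homomorphism property on $\bdd(\calF,\Phi)$, and then using point-separation and (FC4) — this is exactly the content of the paper's Lemma \ref{anc.l.pro-cal-deter}.c) and Proposition \ref{anc.p.EM}, which you have simply inlined. In particular, your choice of which elements to anchor (first $b$, then $e_0f$, with $ee_0\in[f]_\calE$ by multiplicativity) reproduces, when unfolded, the paper's nested construction $\bigcup_{g\in\reg(f,\Phi)}\bigcup_{e\in[g]_\calE}[egf]_\calE\,eg\subseteq[f]_\calE$.
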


For the proof of  Theorem \ref{anc.t.main} we need
some auxiliary results about
determining sets and anchor sets. This is the subject
of the following lemma.

\begin{lem}\label{anc.l.pro-cal-deter}
Let $\Phi: \calF \to \Clo(X)$ be a proto-calculus, let $f\in \calF$,
let $\calM \subseteq \bdd(\calF,\Phi)$ and $\calB \subseteq \calF$.
Then the following assertions hold:
\begin{aufzi}
\item If $\calM$ determines $\Phi(f)$, then it is 
  an anchor set.  The converse holds if $f\in \bdd(\calF, \Phi)$.

\item If $\calM$ determines $\Phi(f)$ (is an anchor set) and
$\calM \subseteq \calN \subseteq \bdd(\calF, \Phi)$ then $\calN$ determines
  $\Phi(f)$ (is an anchor set).

\item If $\calM$ determines $\Phi(f)$ (is an anchor set)
and $\calN_g \subseteq
  \bdd(\calF, \Phi)$ is an anchor set for each $g\in \calM$, 
then also $\calN := \bigcup_{g\in \calM} \calN_g g$
is determines $\Phi(f)$ (is an anchor set).

\item Suppose that
$\calB \calM = \{ bg \suchthat b \in \calB,\, g\in \calM\} \subseteq
\bdd(\Phi, \calF)$. If $\calB \calM$ 
is an anchor set, then so is $\calM$. If 
$\calM \subseteq \reg(f, \Phi)$ and $\calB\calM$
determines $\Phi(f)$, then so does $\calM$.


\item  If $T\in \BL(X)$  commutes with all operators  $\Phi(e)$ and
$\Phi(ef)$ for  $e\in \calM$, and $\calM$ determines 
$\Phi(f)$, then  $T$ commutes with $\Phi(f)$. 
\end{aufzi}
\end{lem}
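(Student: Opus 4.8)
The plan is to treat the five assertions separately, relying throughout on two observations. First, as recorded in Remark~\ref{det.r.det-super}, to prove that a set determines $\Phi(f)$ only the implication ``$\Leftarrow$'' in \eqref{afc.e.determining} needs to be checked. Second --- and this is the real workhorse --- whenever $g,\,bg\in\bdd(\calF,\Phi)$, Axiom~(FC3) forces $\dom(\Phi(b)\Phi(g)) = \dom\Phi(g)\cap\dom\Phi(bg) = X$, so that $\Phi(bg) = \Phi(b)\Phi(g)$ as everywhere-defined operators, \emph{even if} $\Phi(b)$ is unbounded. For products of two bounded factors this is just the homomorphism property of Theorem~\ref{afc.t.pro-cal}(d).

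For (a), inserting $x=0$ into \eqref{afc.e.determining} and using that all the operators involved are single-valued turns the equivalence into ``$y=0 \Leftrightarrow \Phi(e)y=0$ for every $e\in\calM\cap\reg(f,\Phi)$''; the nontrivial direction says precisely that $\calM\cap\reg(f,\Phi)$, and hence $\calM$, is point-separating. Conversely, if $f\in\bdd(\calF,\Phi)$ then $\reg(f,\Phi)=\bdd(\calF,\Phi)$, so the determining condition ranges over all of $\calM$; rewriting $\Phi(ef)=\Phi(e)\Phi(f)$ shows $\Phi(f)x-y\in\bigcap_{e\in\calM}\ker\Phi(e)=\{0\}$. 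Assertion (b) is exactly the monotonicity already noted after Remark~\ref{det.r.det-super}: enlarging $\calM$ only shrinks the intersection of kernels and enlarges $\calM\cap\reg(f,\Phi)$, so the hypothesis attached to $\calN$ implies the one attached to $\calM$.

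Parts (c) and (d) run on the same template: push the hypothesis through the factorisation of a product and then strip off the extra factor. In (c), fix $g\in\calM\cap\reg(f,\Phi)$; then every $hg$ with $h\in\calN_g$ lies in $\calN\cap\reg(f,\Phi)$, and since $h$ and $gf$ are bounded the hypothesis $\Phi(hgf)x=\Phi(hg)y$ rewrites as $\Phi(h)\bigl(\Phi(gf)x-\Phi(g)y\bigr)=0$. As $\calN_g$ is an anchor set this yields $\Phi(gf)x=\Phi(g)y$ for all such $g$, whereupon $\calM$ finishes the argument; the anchor-set variant is the same two-step kernel peeling with $y$ absent. The delicate point, and what I expect to be the main obstacle, is the second half of (d), where $b\in\calB$ need not be $\Phi$-bounded: there the factorisations $\Phi(bgf)=\Phi(b)\Phi(gf)$ and $\Phi(bg)=\Phi(b)\Phi(g)$ are licit only through the collapsed domain condition of the first paragraph, and one must verify that every product appearing is genuinely everywhere defined. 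Granting this, $\Phi(gf)x=\Phi(g)y$ (valid because $\calM\subseteq\reg(f,\Phi)$) propagates to $\Phi(bgf)x=\Phi(bg)y$ for all $bg\in\calB\calM\cap\reg(f,\Phi)$, and the hypothesis on $\calB\calM$ gives $\Phi(f)x=y$; the anchor-set half of (d) is the same device, using $\Phi(b)\cdot 0=0$.

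Finally, for (e) I would unfold ``$T$ commutes with $\Phi(f)$'' into the implication $\Phi(f)x=y\Rightarrow\Phi(f)(Tx)=Ty$. Assuming $\Phi(f)x=y$, the forward implication of \eqref{afc.e.determining} already gives $\Phi(ef)x=\Phi(e)y$ for $e\in\calM\cap\reg(f,\Phi)$; applying $T$ and using the commutation identities $T\Phi(ef)=\Phi(ef)T$ and $T\Phi(e)=\Phi(e)T$ of bounded operators turns this into $\Phi(ef)(Tx)=\Phi(e)(Ty)$. Since $\calM$ determines $\Phi(f)$, the implication ``$\Leftarrow$'' then delivers $\Phi(f)(Tx)=Ty$. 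The only subtlety here is reading the relation-theoretic commutation $T\Phi(e)\subseteq\Phi(e)T$ as the honest operator identity $T\Phi(e)=\Phi(e)T$, which is immediate because $\Phi(e)$ is bounded and everywhere defined.
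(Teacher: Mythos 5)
Your proof is correct and follows essentially the same route as the paper's: (a) via single-valuedness and kernel-peeling, (b) by monotonicity, (c) by fixing $g\in\calM\cap\reg(f,\Phi)$ and using the anchor property of $\calN_g$ before invoking $\calM$, (d) via the product factorizations, and (e) by pushing $T$ through the determining condition. Your explicit ``workhorse'' observation that $\Phi(bg)=\Phi(b)\Phi(g)$ whenever $g,\,bg\in\bdd(\calF,\Phi)$ (even for unbounded $\Phi(b)$, by the domain clause of (FC3)) is exactly what the paper uses implicitly in part (d), so this is a difference of presentation, not of substance.
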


\begin{proof}
a)\  The first assertion follows from \eqref{afc.e.determining} and the fact that 
$\Phi(f)$ is an operator and not just a linear relation.
For the second suppose that $\Phi(f) \in \BL(X)$ and $\calM$ is an
anchor set. Let $x, y\in X$ such that $\Phi(ef)x= \Phi(e)y$ for
all $e\in \calM$. Then, since $\Phi(f)$ is bounded, 
\[ \Phi(e)y = \Phi(ef)x = \Phi(e)\Phi(f)x.
\]
Since $\calM$ is an anchor set, it follows that $\Phi(f)x = y$. 

\prfnoi
b)\ is trivial (and has already been mentioned above).

\prfnoi
c)\ Suppose first that $\calM$ determines $\Phi(f)$
and
that $\Phi(egf)x = \Phi(eg)y$ for all $g\in \calM$ and
all $e\in \calN_g$ such that $eg\in \reg(f, \Phi)$. 
Fix $g\in \calM\cap \reg(f,\Phi)$. Then for each $e\in \calN_g$
we have $eg\in \reg(f,\Phi)$ and hence
\[ \Phi(e) \Phi(gf)x = \Phi(egf)x = \Phi(eg)y = \Phi(e)\Phi(g)y
\]
Since $\calN_g$ is an anchor set, it follows that $\Phi(gf)x= \Phi(g)y$. Since 
$\calM$ determines $\Phi(f)$, it follows that $\Phi(f)x = y$ as claimed.

Similary (and even more easily) one proves that 
$\calN$ is an anchor set if $\calM$ is one.

\prfnoi
d)\ Suppose that  $\calB \calM$ is an anchor set. For each $b\in \calB$
and $f\in \calM$ we have $\Phi(bf) = \Phi(b)\Phi(f)$ and hence
$\ker(\Phi(f)) \subseteq \ker(\Phi(bf))$. It follows readily that $\calM$ is an anchor set.

Now suppose that $\calM \subseteq \reg(f, \Phi)$ and that $\calB
\calM$ determines $\Phi(f)$. Let $x,y\in X$ 
such that  
$\Phi(gf)x = \Phi(g)y$ for all $g\in \calM$. Then
$\Phi(bgf)x = \Phi(b) \Phi(gf)x= \Phi(b)\Phi(g)y= \Phi(bg)y$ for all
$b\in \calB$ and $g\in \calM$. Hence,  by hypothesis, $\Phi(f)x = y$.

\vanish{
\prfnoi
e)\ Since $\calB$ determines $\Phi(f)$ and $\calM$ is an anchor
set, $\calM \calB$ determines $\Phi(f)$. (This can be seen as an
application of c).) By hypothesis and b) above, 
$\calC \calM$ determines $\Phi(f)$. Then, d) implies that 
$\calM$ determines $\Phi(f)$. 
}

\prfnoi
e)\ Suppose that $\Phi(f)x = y$. Then, for each $e\in \calM$, 
\[ \Phi(ef)Tx = T\Phi(ef)x = T\Phi(e)y = \Phi(e)Ty.
\]
Since $\calM$ determines $\Phi(f)$,  
$\Phi(f)Tx = Ty$ as claimed.
\end{proof}

Let us mention that assertion c) is
by far the most important
part of  Lemma \ref{anc.l.pro-cal-deter}. 
As a first  consequence, we
note the following result.

\begin{prop}\label{anc.p.EM}
Let $\Phi: \calF \to \Clo(X)$ be a proto-calculus,
let $f\in \calF$, and let
$\calM, \calE\subseteq \bdd(\Phi, \calF)$
such that $\calM$ determines $\Phi(f)$. 
Suppose that 
one of the following condititions holds: 
\begin{aufziii}
\item $\calM$ is anchored in $\calE$.
\item For each $g\in \calM$ there is an anchor set
$\calN_g$ such that $\calN_g g \subseteq \calB \calE$,  where
$\calB := \{ h \in \calF \suchthat h \calE \subseteq \bdd(\Phi,
\calF)\}$.
\end{aufziii}
Then $\calE$ determines $\Phi(f)$. 
\end{prop}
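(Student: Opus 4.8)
The plan is to handle the two conditions by the same device: feed the given determining set $\calM$ into Lemma~\ref{anc.l.pro-cal-deter}\,c) together with a suitable family of anchor sets to produce a \emph{new} determining set sitting inside $\calE$ (for (i)) or inside $\calB\calE$ (for (ii)), and then exploit the monotonicity of part~b) — and, for (ii), the ``division'' of part~d). Throughout I only have to verify the implication ``$\Leftarrow$'' of \eqref{afc.e.determining}, the other being automatic from Axiom~(FC3).

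For condition (i) I would argue directly. Since $\calM$ is anchored in $\calE$, for every $g\in\calM$ the set $\calN_g:=[g]_\calE=\{e\in\calE \suchthat eg\in\calE\}$ is an anchor set, and clearly $\calN_g\subseteq\calE\subseteq\bdd(\calF,\Phi)$. Applying Lemma~\ref{anc.l.pro-cal-deter}\,c) to $\calM$ (which determines $\Phi(f)$) and to these anchor sets, the set $\calN:=\bigcup_{g\in\calM}\calN_g\,g=\bigcup_{g\in\calM}[g]_\calE\,g$ determines $\Phi(f)$. By the very definition of $[g]_\calE$ one has $[g]_\calE\,g\subseteq\calE$, hence $\calN\subseteq\calE$, and Lemma~\ref{anc.l.pro-cal-deter}\,b) (any superset of a determining set determines) finishes this case. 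I expect no difficulty here; it is the clean half.

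For condition (ii) the skeleton is identical, with $\calB\calE$ in place of $\calE$. Applying Lemma~\ref{anc.l.pro-cal-deter}\,c) to $\calM$ and the given anchor sets $\calN_g$ shows that $\calN:=\bigcup_{g\in\calM}\calN_g\,g$ determines $\Phi(f)$; since $\calN\subseteq\calB\calE\subseteq\bdd(\calF,\Phi)$ by construction of $\calB$, part~b) upgrades this to: \emph{$\calB\calE$ determines $\Phi(f)$}. It then remains to descend from $\calB\calE$ to $\calE$, which is essentially the second statement of Lemma~\ref{anc.l.pro-cal-deter}\,d), the hypothesis $\calB\calE\subseteq\bdd(\calF,\Phi)$ being built into the definition of $\calB$.

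The hard part will be the regularizer bookkeeping in this last descent, since Lemma~\ref{anc.l.pro-cal-deter}\,d) is stated with the proviso $\calE\subseteq\reg(f,\Phi)$, whereas the Proposition only assumes $\calE\subseteq\bdd(\calF,\Phi)$. I would first observe that whether a subset of $\bdd(\calF,\Phi)$ determines $\Phi(f)$ depends only on its intersection with $\reg(f,\Phi)$, because the equivalence \eqref{afc.e.determining} quantifies solely over $e\in\calE\cap\reg(f,\Phi)$; this makes it harmless to pass to $\calE\cap\reg(f,\Phi)$. The genuine work is then to control the factorizations $ng=he$ with $h\in\calB$ and $e\in\calE$ coming from $\calN_g\,g\subseteq\calB\calE$: one wants the relevant $e$ to lie in $\reg(f,\Phi)$, so that, once $ef\in\bdd(\calF,\Phi)$, the identity $\Phi(hef)x=\Phi(h)\Phi(ef)x=\Phi(h)\Phi(e)y$ (using Theorem~\ref{afc.t.pro-cal}\,b)) becomes available and the assumption $\Phi(ef)x=\Phi(e)y$ can be inserted. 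Tracking \emph{which} products are regularizers, rather than the bare membership $\calN_g\,g\subseteq\calB\calE$, is where the care is required and is the step I expect to be the main obstacle.
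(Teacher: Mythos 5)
Your treatment of condition (i) is exactly the paper's proof: apply Lemma~\ref{anc.l.pro-cal-deter}.c) with $\calN_g := [g]_\calE$, note $\bigcup_{g}[g]_\calE\, g \subseteq \calE$, and finish by monotonicity (the paper takes the union only over $g\in \calM\cap\reg(f,\Phi)$, but since determination involves only that intersection anyway, this difference is immaterial). Your skeleton for condition (ii) — Lemma~c) with the hypothesized anchor sets $\calN_g$, pass to the superset $\calB\calE$, then descend to $\calE$ via Lemma~\ref{anc.l.pro-cal-deter}.d) — is also the paper's argument; in fact your version is more faithful to the stated hypothesis than the paper's own text, which at this step asserts ``$\calE$ is an anchor set'' and ``$\calE\calM\subseteq\calB\calE$'', statements fitting the special case $\calN_g=\calE$ rather than condition (ii) as formulated.

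The obstacle you flag at the descent is a genuine gap, and you are right that it is not closed by passing to $\calE\cap\reg(f,\Phi)$ — but be aware that it is the paper's gap as much as yours: the paper simply invokes Lemma~d), whose second assertion carries precisely the proviso $\calE\subseteq\reg(f,\Phi)$ that you noticed, and this proviso is not implied by the Proposition's hypothesis $\calE\subseteq\bdd(\calF,\Phi)$. The failure mode is the one you describe: an element $h=be\in\calB\calE\cap\reg(f,\Phi)$ need not admit any factorization with $e\in\calE\cap\reg(f,\Phi)$; then $\Phi(ef)$ may be unbounded, the identity $\Phi(bef)=\Phi(b)\Phi(ef)$ (Theorem~\ref{afc.t.pro-cal}.b), which needs the right-hand factor bounded) is unavailable, and the hypothesis $\Phi(ef)x=\Phi(e)y$ cannot be inserted. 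Under the additional assumption $\calE\subseteq\reg(f,\Phi)$ the chain $\Phi(bef)x=\Phi(b)\Phi(ef)x=\Phi(b)\Phi(e)y=\Phi(be)y$ holds for all $b\in\calB$, $e\in\calE$, and the argument closes — this is exactly the proof of Lemma~d). That extra assumption is satisfied in every use the paper makes of part (ii): in the remark deriving Theorem~\ref{anc.t.com} one has $\calE\subseteq\reg(f,\Phi)$ by hypothesis, and in the proof of Theorem~\ref{ext.t.succ-comp} (case 3) the role of $\calE$ is played by $[f]_\calE$, whose elements $e$ satisfy $e, ef\in\calE\subseteq\bdd$. So your proposal reproduces the paper's argument, and where it stalls it has identified a real imprecision in the paper rather than a defect peculiar to your approach; the clean repair is to add the hypothesis $\calE\subseteq\reg(f,\Phi)$ to part (ii).
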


\begin{proof}
1)\ For each $g\in \calM' := \calM \cap \reg(f, \Phi)$ 
let $\calN_g := [g]_\calE$. By hypothesis, this is an anchor set. 
Hence, by Lemma \ref{anc.l.pro-cal-deter}.c), 
$\calN := \bigcup_{g\in \calM'} \calN_g g$ determines
$\Phi(f)$. As $\calN \subseteq \calE \cap \reg(f, \Phi)$, 
also  $\calE$ determines $\Phi(f)$.

\prfnoi
2)\ By hypothesis, $\calE$ is an anchor set and $\calM$ determines
$\Phi(f)$. Hence, by Lemma \ref{anc.l.pro-cal-deter}.c), 
$\calE \calM = \bigcup_{g \in \calM} \calE g$ determines $\Phi(f)$.
Since $\calE \calM\subseteq \calB \calE$, also the latter set
determines $\Phi(f)$. Lemma \ref{anc.l.pro-cal-deter}.d) then implies
that $\calE$ determines $\Phi(f)$.
\end{proof}

\begin{rem}
Theorem \ref{anc.t.com} is a 
corollary of Proposition \ref{anc.p.EM}. (Apply
part 2) with  $\calN_g = \calE$ for each 
$g\in \calM := \reg(\Phi, \calF)$.) 

Actually, we realize that one may replace \
the overall commutativity assumption
on $\calF$ by a weaker condition, e.g.: 
{\em $[f]_\calE \cap \{ e\in \calE \suchthat e \calM \subseteq 
\calM e\}$
is an anchor set.} 
\end{rem}

Finally, we are going to  prove Theorem \ref{anc.t.main}.

\begin{proof}[Proof of Theorem \ref{anc.t.main}]
Write $\calM := \bdd(\Phi, \calF)$ and
fix $f\in \calF$. Then $\calM$ determines $\Phi(f)$,
since $\Phi$ is a calculus, and $\calM$ is anchored in $\calE$,
by assumption. Hence, by 
Proposition \ref{anc.p.EM}.1), $\calE$ determines $\Phi(f)$. 

\prfnoi
For the second assertion,  
suppose in addition that $\calE$ is multiplicative. 
Fix $g\in \reg(f, \Phi)$. Then, as above, 
$[g]_\calE$ is an anchor set. Also, for each $e\in [g]_\calE$
one has $egf\in \calM$ and hence $[egf]_\calE$ is also an anchor
set. It follows that $\calN_g := \bigcup_{e\in [g]_\calE} [egf]_\calE
e$ is an anchor set. By Lemma \ref{anc.l.pro-cal-deter}.c), 
the set 
\[ \calN := \bigcup_{g\in \reg(f, \Phi)} \calN_g g
\] 
determines $\Phi(f)$. But $\calE$ is multiplicative, and therefore
 \[ \calN = \bigcup_{g\in \reg(f, \Phi)} \calN_g g
= \bigcup_{g\in \reg(f, \Phi)} 
\bigcup_{e\in [g]_\calE} [egf]_\calE eg  \subseteq [f]_\calE.
\]
Hence, also  $[f]_\calE$ determines $\Phi(f)$.
\end{proof}

\vanish{
Also, part e) shows that commutativity
helps to identify determining sets. This is the reason why
many results become much nicer under commutativity assumptions.

\begin{thm}\label{anc.t.EM}
Let $\Phi: \calF \to \Clo(X)$ be a proto-calculus, and let 
$\calE, \calM \subseteq \bdd(\calF, \Phi)$ such that 
$\calM$ is anchored in $\calE$. Then for $f\in \calF$ 
the following assertions hold:
\begin{aufzi}
\item If $\calM$ determines $\Phi(f)$ then also $\calE$ does.

\item If $\calM$ strongly determines
  $\Phi(f)$ and if, in addition,  $\calE \calM$ is anchored in
  $\calE$ and $\calE$ is multiplicative,
then also $\calE$ strongly determines $\Phi(f)$.
\end{aufzi}
\end{thm}

\begin{proof}
a) For each $g\in \calM' := \calM \cap \reg(f, \Phi)$ 
let $\calN_g := [g]_\calE$. By hypothesis, this is an anchor set. 
Hence, by Lemma \ref{anc.l.pro-cal-deter}.c), 
$\calN := \bigcup_{g\in \calM'} \calN_g g$ determines
$\Phi(f)$. As $\calN \subseteq \calE \cap \reg(f, \Phi)$, 
also  $\calE$ determines $\Phi(f)$.

\prfnoi
b) Fix $g\in [f]_{\calM}$. Then, by hypothesis,
$[g]_\calE$ is an anchor set. Also, for each $e\in [g]_\calE$
one has $egf\in \calE\calM$ and hence $[egf]_\calE$ is also an anchor
set. It follows that $\calN_g := \bigcup_{e\in [g]_\calE} [egf]_\calE
e$ is an anchor set. By Lemma \ref{anc.l.pro-cal-deter}.c) 
\[ \bigcup_{g\in [f]_{\calM}} \calN_g g = 
\bigcup_{g\in [f]_{\calM}} 
\bigcup_{e\in [g]_\calE} [egf]_\calE eg \subseteq [f]_\calE
\]
determines $\Phi(f)$ as claimed.
\end{proof}

\begin{rem}\label{anc.r.center}
The somehow awkward assumption 
``$\calE \calM$ is anchored in $\calE$'' in part b) of the theorem 
is automatically
satisfied when $\calE$ is commutative. Actually, 
the even weaker  assumption
``for each $g\in \calM$ the set
$[g]_\calE \cap \cnt(\calE)$ is an anchor set''
suffices. (Recall from  \eqref{intro.e.center} that $\cnt(\calE)$ is the center
of $\calE$.)
\end{rem}

\begin{cor}\label{anc.c.EM}
Let $\Phi: \calF \to \Clo(X)$ be  a calculus, and let
$\calE  \subseteq \bdd(\calF,\Phi)$ be such that 
$\bdd(\calF, \Phi)$ is anchored in $\calE$. 
Then $\calE$ determines $\Phi$ on $\calF$. If,
in addition, $\calE$ is multiplicative, then 
$\calE$ is an algebraic core for $\Phi$. 
\end{cor}

\begin{proof}
Apply Theorem \ref{anc.t.EM} a.1) and b.1) with 
$\calM = \bdd(\calF, \Phi)$. 
\end{proof}

}

\section{Uniqueness, Restriction, Compatibility}\label{s.urc}

Let  $\Phi_1, \Phi_2: \calF \to \Clo(X)$ be calculi, let $f\in
\calF$ and suppose that $\calM\subseteq \calF$ determines both calculi
$\Phi_1$ and $\Phi_2$  at $f$ (cf. Remark \ref{det.r.det-super}). 
Suppose further that $\Phi_1$ and $\Phi_2$ agree on $\calM$. Can one
conclude that $\Phi_1(f) = \Phi_2(f)$?

A moment's reflection reveals that the answer might
be ``no'' in general. The reason is that we
do not know whether $\Phi_1$ and $\Phi_2$ coincide on products $ef$
for $e\in \calM$. This is the original motivation for introducing the concept of
strong determination.

\begin{lem}\label{urc.l.uni}
Suppose that $\Phi_1, \Phi_2 : \calF \to \Clo(X)$ are two
proto-calculi and $\calE \subseteq \calF$  such that
$\Phi_1\res{\calE} = \Phi_2\res{\calE}$.  If $\calE$ is an algebraic core
for $\Phi_2$  then
\[ \Phi_1(f) \subseteq \Phi_2(f)\qquad \text{for each $f\in \calF$}.
\]
In particular, if $\calE$ is an algebraic core for both calculi, then
$\Phi_1 = \Phi_2$.
\end{lem}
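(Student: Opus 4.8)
The plan is to prove the inclusion $\Phi_1(f)\subseteq\Phi_2(f)$ for a fixed $f$ by taking any pair $(x,y)$ with $\Phi_1(f)x=y$ and deducing $\Phi_2(f)x=y$ from the fact that $\calE$ strongly determines $\Phi_2(f)$; the closing ``in particular'' then drops out by interchanging the two calculi.

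First I would record that $\calE$ consists of elements bounded for \emph{both} calculi. Indeed, $\calE$ being an algebraic core for $\Phi_2$ means $\calE\subseteq\bdd(\calF,\Phi_2)$, and since $\Phi_1\res{\calE}=\Phi_2\res{\calE}$ we get $\Phi_1(e)=\Phi_2(e)\in\BL(X)$ for every $e\in\calE$, so $\calE\subseteq\bdd(\calF,\Phi_1)$ as well. Now fix $f$ and $(x,y)$ with $\Phi_1(f)x=y$. For each $e\in[f]_\calE$ the trivial ``$\dann$''-direction of \eqref{afc.e.strong-determining} applied to $\Phi_1$ (which by Remark~\ref{det.r.det-super} is nothing but Axiom~(FC3) together with the boundedness of $\Phi_1(e)$) yields $\Phi_1(ef)x=\Phi_1(e)y$.

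The decisive step is the transfer from $\Phi_1$ to $\Phi_2$. By the very definition of $[f]_\calE$ both $e$ and $ef$ lie in $\calE$, so the hypothesis $\Phi_1\res{\calE}=\Phi_2\res{\calE}$ allows me to replace $\Phi_1$ by $\Phi_2$ on both sides of the last identity, obtaining $\Phi_2(ef)x=\Phi_2(e)y$ for all $e\in[f]_\calE$. Since $\calE$ strongly determines $\Phi_2(f)$, the nontrivial ``$\Leftarrow$''-direction of \eqref{afc.e.strong-determining} now forces $\Phi_2(f)x=y$, i.e. $(x,y)\in\Phi_2(f)$. This proves $\Phi_1(f)\subseteq\Phi_2(f)$ for every $f$, and if $\calE$ is an algebraic core for both calculi, exchanging their roles gives the reverse inclusion and hence $\Phi_1=\Phi_2$.

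I expect the only real subtlety --- and the conceptual point of the lemma --- to be why strong determination, rather than plain determination, is the correct hypothesis. The argument must evaluate the calculi at the products $ef$, not merely at $e$, yet the two calculi are assumed to agree only on $\calE$; strong determination is precisely tailored so that its test family $[f]_\calE$ keeps these products inside $\calE$. With ordinary determination the relevant regularizers range over $\calM\cap\reg(f,\Phi)$, whose products with $f$ may leave $\calE$, so the transfer step would break down --- exactly the obstruction flagged in the paragraph preceding the statement.
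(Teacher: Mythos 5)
Your proof is correct and follows exactly the paper's argument: fix $(x,y)\in\Phi_1(f)$, use (FC3) and boundedness of $\Phi_1(e)$ to get $\Phi_1(ef)x=\Phi_1(e)y$ for $e\in[f]_\calE$, transfer to $\Phi_2$ since both $e$ and $ef$ lie in $\calE$, and conclude by strong determination of $\Phi_2(f)$. Your preliminary observation that $\calE\subseteq\bdd(\calF,\Phi_1)$ and your closing remark on why strong (rather than plain) determination is the right hypothesis are just explicit versions of points the paper leaves implicit or states in the preceding paragraph.
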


\begin{proof}
Let $f\in \calF$. And suppose that $x,y\in X$ are such that
$\Phi_1(f)x= y$. Then for every $e\in [f]_\calE$ we have
\[ \Phi_1(ef)x = \Phi_1(e)y.
\]
This is the same as $\Phi_2(ef)x= \Phi_2(e)y$, as $\Phi_1$ and
$\Phi_2$ agree on $\calE$. By hypothesis, $[f]_\calE$ determines
$\Phi_2(f)$, so it follows that $\Phi_2(f)x= y$. 
This shows $\Phi_1(f)\subseteq \Phi_2(f)$.
\end{proof}

Combining Lemma \ref{urc.l.uni} with Theorem \ref{anc.t.main} 
we arrive
at the following uniqueness statement.

\begin{thm}[Uniqueness]\label{urc.t.uni}
Let $\Phi_1, \Phi_2: \calF \to \Clo(X)$ be calculi. Suppose that there is $\calE \subseteq \calF$ with the following properties:
\begin{aufziii}
\item $\Phi_1(e) = \Phi_2(e) \in \BL(X)$ for all  $e\in \calE$.
\item $\calF$ is  anchored in $\calE$ (with respect to one/both calculi).
\end{aufziii}
Then $\Phi_1 = \Phi_2$. 
\end{thm}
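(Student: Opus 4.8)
The plan is to deduce the statement from the uniqueness Lemma \ref{urc.l.uni} by way of Theorem \ref{anc.t.main}. The catch is that Lemma \ref{urc.l.uni} requires the set on which the two calculi agree to be an \emph{algebraic core} (i.e.\ to strongly determine), while Theorem \ref{anc.t.main} only manufactures an algebraic core when the anchoring set is \emph{multiplicative}. Since nothing is assumed about products of elements of $\calE$, the decisive first move will be to replace $\calE$ by the multiplicative subsemigroup $\calE'$ it generates, and to check that all the hypotheses survive this enlargement.

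Before that, two preliminary observations are in order. First, hypothesis (i) gives $\calE \subseteq \bdd(\calF,\Phi_1) \cap \bdd(\calF,\Phi_2)$. Second, the anchoring hypothesis (ii) does not depend on the choice of calculus: for any $g\in\calF$ the set $[g]_\calE$ is defined purely in terms of the semigroup structure and satisfies $[g]_\calE\subseteq\calE$, so by (i) the operators $\Phi_1(e)$ and $\Phi_2(e)$ share the same kernel for every $e\in[g]_\calE$; hence $[g]_\calE$ is a $\Phi_1$-anchor set exactly when it is a $\Phi_2$-anchor set. Thus ``$\calF$ anchored in $\calE$'' holds with respect to one calculus iff it holds with respect to the other, which legitimizes the ``one/both'' in the statement.

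Now I would introduce $\calE'$, the multiplicative subsemigroup of $\calF$ generated by $\calE$, and verify three things. \emph{Containment:} since $\bdd(\calF,\Phi_i)$ is a subalgebra by Theorem \ref{afc.t.pro-cal}.d), every finite product of elements of $\calE$ lies in it, so $\calE'\subseteq\bdd(\calF,\Phi_i)$ for $i=1,2$. \emph{Agreement:} $\Phi_1=\Phi_2$ on all of $\calE'$, because the two agree on $\calE$ by (i) and each $\Phi_i$ is multiplicative on its bounded part (again Theorem \ref{afc.t.pro-cal}.d)), so $\Phi_i(e_1\cdots e_n)=\Phi_i(e_1)\cdots\Phi_i(e_n)$ collapses the comparison on products back to the comparison on $\calE$. \emph{Anchoring:} for each $g\in\bdd(\calF,\Phi_i)$ one has $[g]_\calE\subseteq[g]_{\calE'}\subseteq\bdd(\calF,\Phi_i)$, and since $[g]_\calE$ is an anchor set, so is the larger set $[g]_{\calE'}$ by Lemma \ref{anc.l.pro-cal-deter}.b); hence $\bdd(\calF,\Phi_i)$ is anchored in $\calE'$.

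With $\calE'$ multiplicative and $\bdd(\calF,\Phi_i)$ anchored in it, Theorem \ref{anc.t.main} applies to each calculus and yields that $\calE'$ is an algebraic core for $\Phi_1$ and for $\Phi_2$. As $\Phi_1$ and $\Phi_2$ coincide on $\calE'$, the ``in particular'' clause of Lemma \ref{urc.l.uni} then gives $\Phi_1=\Phi_2$. I expect the only genuinely nontrivial point to be the recognition that mere determination is insufficient for a uniqueness conclusion — the products $ef$ with $e\in\calE$ generally escape $\calE$, where no agreement is assumed — and that passing to the multiplicative closure simultaneously cures this (upgrading determination to strong determination via the multiplicativity clause of Theorem \ref{anc.t.main}) and preserves the identity $\Phi_1=\Phi_2$ on the enlarged set.
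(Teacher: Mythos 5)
Your proposal is correct and follows essentially the same route as the paper's own proof: pass to the multiplicative semigroup $\calE'$ generated by $\calE$, invoke Theorem \ref{anc.t.main} to conclude that $\calE'$ is an algebraic core for both calculi, and finish with Lemma \ref{urc.l.uni}. The only difference is that you spell out the verifications (containment in $\bdd(\calF,\Phi_i)$, agreement of the calculi on products, preservation of anchoring, and independence of the anchoring hypothesis from the choice of calculus) that the paper leaves implicit in the phrase ``we may suppose that $\calE$ is multiplicative.''
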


\begin{proof}
By passing to
\[ \calE':= \bigcup_{n\ge 1} \calE^n = 
\{ e_1\cdots e_n \suchthat 
n \in \N, \, e_1, \dots, e_n \in \calE\},
\]
the multiplicative semigroup generated
by $\calE$ in $\calF$, we may suppose that $\calE$ is multiplicative.
Then Theorem  \ref{anc.t.main} yields that
$\calE$ is an algebraic core for both calculi. Hence, Lemma \ref{urc.l.uni}
yields $\Phi_1 = \Phi_2$.
\end{proof}

\medskip

\subsection{Pull-Back and Restriction of a Calculus}

Suppose that $\calF$ is a unital algebra
and $\Phi: \calF \to \Clo(X)$ is a proto-calculus. 
Further, let $\calG$ be a unital algebra and
$\eta: \calG \to \calF$ a unital algebra homomorphism.
Then the mapping 
\[ \eta^*\Phi: \calG \to \Clo(X),\qquad (\eta^*\Phi)(g) :=
\Phi(\eta(g))
\]
is called  the {\emdf pull-back} of $\Phi$ {\emdf along} $\eta$. 
It is easy to see that, in general, $\eta^*\Phi$ is a proto-calculus
as well.
A special case occurs if $\calG$ is a subalgebra 
of $\calF$ and $\eta$ is the inclusion mapping. 
Then $\eta^*\Phi = \Phi\res{\calG}$ is just the {\emdf restriction}
of $\Phi$ to $\calG$.

\begin{lem}\label{urc.l.pull-back}
Let $\calF$ be a unital algebra and $\Phi: \calF \to
\Clo(X)$ a proto-calculus. Furthermore, let $\calG$ be a unital
algebra, $\eta: \calG \to \calF$ a unital homomorphism, 
$\calE \subseteq \bdd(\eta^*\Phi, \calG)$, and $g\in \calG$.
Then the following assertions hold. 
\begin{aufzi}
\item $\bdd(\eta^*\Phi, \calG) = \eta^{-1}( \bdd(\Phi, \calF))$,\\
\quad $\eta (\bdd(\eta^*\Phi, \calG)) = \bdd(\Phi, \calF) \cap
\eta(\calG) = \bdd(\Phi\res{\eta(\calG)}, \eta(\calG))$.
\item $\calE$ is an $\eta^*\Phi$-anchor
if and only if $\eta(\calE)$ is a $\Phi$-anchor.

\item $\eta( [g]_\calE) \subseteq  [\eta(g)]_{\eta(\calE)}$.

\item $\calE$
determines $\eta^*\Phi$ at $g$ if and only if
$\eta(\calE)$ determines $\Phi$ at $\eta(g)$.
\item If $\calE$ is an algebraic core for 
$\eta^*\Phi$ then  $\eta(\calE)$ is an algebraic
core for $\Phi\res{\eta(\calG)}$.
\end{aufzi}
\end{lem}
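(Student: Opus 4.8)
The plan is to treat parts (a)--(c) as direct unfoldings of the definitions, using only that $\eta$ is a unital algebra homomorphism, and then to reduce the real content of the lemma to the bookkeeping of regularizer sets carried out in (d), from which (e) follows formally. For (a) I would note that $g\in\bdd(\eta^*\Phi,\calG)$ means $\Phi(\eta(g))\in\BL(X)$, i.e. $\eta(g)\in\bdd(\Phi,\calF)$, which is the first equality; the second is the set-theoretic identity $\eta(\eta^{-1}(S))=S\cap\eta(\calG)$ with $S=\bdd(\Phi,\calF)$, combined with $\bdd(\Phi\res{\eta(\calG)},\eta(\calG))=\bdd(\Phi,\calF)\cap\eta(\calG)$, which is immediate from the definition of the restriction. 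For (b), the families $\{(\eta^*\Phi)(e)\suchthat e\in\calE\}$ and $\{\Phi(h)\suchthat h\in\eta(\calE)\}$ coincide as sets of operators, so their kernels intersect in the same subspace and point-separation of the one is point-separation of the other. For (c), if $e\in[g]_\calE$ then $e,eg\in\calE$, whence $\eta(e)\eta(g)=\eta(eg)\in\eta(\calE)$ by multiplicativity of $\eta$, and this is exactly $\eta(e)\in[\eta(g)]_{\eta(\calE)}$.

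The heart of the lemma is (d), and here I would unwind the definition \eqref{afc.e.determining} of ``$\calM$ determines $\Phi$ at $f$'' on both sides. Both biconditionals share the identical left-hand side $\Phi(\eta(g))x=y$, so it suffices to match their quantified right-hand sides. The key step is to show that $\eta$ maps $\calE\cap\reg(g,\eta^*\Phi)$ \emph{onto} $\eta(\calE)\cap\reg(\eta(g),\Phi)$. Indeed, by (a) each $e\in\calE$ already satisfies $\eta(e)\in\bdd(\Phi,\calF)$, so $e\in\reg(g,\eta^*\Phi)\iff \eta(e)\eta(g)\in\bdd(\Phi,\calF)\iff \eta(e)\in\reg(\eta(g),\Phi)$, while surjectivity onto the target follows by writing any of its elements as $\eta(e)$ with $e\in\calE$. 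Since the condition $\Phi(\eta(eg))x=\Phi(\eta(e))y$ --- equivalently $\Phi(\eta(e)\eta(g))x=\Phi(\eta(e))y$ --- depends on $e$ only through $\eta(e)$, the two quantified conditions coincide, giving (d). I expect this matching of regularizer sets to be the main obstacle: it is exactly where the standing hypothesis $\calE\subseteq\bdd(\eta^*\Phi,\calG)$ and the homomorphism identity $\eta(eg)=\eta(e)\eta(g)$ are both genuinely needed.

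For (e) I would first record that $\Psi:=\Phi\res{\eta(\calG)}$ is a proto-calculus on the unital subalgebra $\eta(\calG)$ and that, regarding $\eta$ as a surjective homomorphism $\calG\to\eta(\calG)$, one has $\eta^*\Psi=\eta^*\Phi$. Now fix $g\in\calG$. Since $\calE$ is an algebraic core for $\eta^*\Phi=\eta^*\Psi$, the set $[g]_\calE$ determines $\eta^*\Psi$ at $g$; applying the already-proven part (d) with $\Phi$ replaced by $\Psi$ and with determining set $[g]_\calE\subseteq\bdd(\eta^*\Psi,\calG)$ shows that $\eta([g]_\calE)$ determines $\Psi$ at $\eta(g)$. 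By (c) we have $\eta([g]_\calE)\subseteq[\eta(g)]_{\eta(\calE)}\subseteq\bdd(\Psi,\eta(\calG))$, so Lemma \ref{anc.l.pro-cal-deter}.b upgrades this to: $[\eta(g)]_{\eta(\calE)}$ determines $\Psi$ at $\eta(g)$, i.e. $\eta(\calE)$ strongly determines $\Psi$ at $\eta(g)$. As $g$ ranges over $\calG$, the element $\eta(g)$ ranges over all of $\eta(\calG)$, so $\eta(\calE)$ strongly determines $\Phi\res{\eta(\calG)}$ at every point of its domain, i.e. it is an algebraic core for $\Phi\res{\eta(\calG)}$, as required.
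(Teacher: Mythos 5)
Your proof is correct and follows essentially the same route as the paper's (very terse) proof: (a)--(c) are direct unfoldings of the definition of $\eta^*\Phi$, (d) rests on the equivalence $\Phi(\eta(e)\eta(g))x=\Phi(\eta(e))y \Leftrightarrow (\eta^*\Phi)(eg)x=(\eta^*\Phi)(e)y$ together with the matching of the relevant regularizer sets under $\eta$, and (e) is deduced from (c) and (d). Your write-up merely makes explicit the bookkeeping (the surjectivity of $\eta$ from $\calE\cap\reg(g,\eta^*\Phi)$ onto $\eta(\calE)\cap\reg(\eta(g),\Phi)$, and the use of superset monotonicity in (e)) that the paper leaves to the reader.
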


\begin{proof}
a), b), and c) follow directly from the definition of $\eta^*\Phi$.

\prfnoi
d) This follows since for $x,y\in X$ and $e \in \calE$ 
\[ \Phi(\eta(e) \eta(g))x= \Phi(\eta(e))y
\quad\gdw
\quad (\eta^*\Phi)(eg)x = (\eta^*\Phi)(e)y.
\]
e)  This follows from c) and d). 
\end{proof}

We have already remarked that $\eta^*\Phi$ is a proto-calculus,
whenever $\Phi$ is one. The following example shows that,
even if $\Phi$ is a calculus,   $\eta^*\Phi$ need not be one.

\begin{exa}
Let $A$ be an unbounded operator with non-empty resolvent set $\resolv(A)$.
Let $\calF$ be the algebra of all rational functions
with poles in $\resolv(A)$ and let $\Phi$ be the natural
calculus (as described, e.g., in \cite[Appendix A.6]{HaaseFC}). 
Let $\calG$ be the algebra of polynomial functions
 and $\eta: \calG \to \calF$ the inclusion mapping. Then 
$\eta^*\Phi = \Phi\res{\calG}$ is simply the restriction of 
$\Phi$ to $\calG$. And this is not a calculus,
as the only functions in $\calG$ that yield bounded operators
are the constant ones. 
\end{exa}

We say that $\eta$ is {\emdf $\Phi$-regular} if
$\eta^*\Phi$ is a calculus. And a subalgebra $\calG$ 
of $\calF$ is called {\emdf $\Phi$-regular}, if 
the restriction of $\Phi$ to $\calG$ is a calculus, i.e.,
if the inclusion mapping is $\Phi$-regular.

\begin{cor}\label{urc.c.Phi-reg}
In the situation of Lemma \ref{urc.l.pull-back}, 
the following statements
are equivalent:
\begin{aufzii}
\item $\eta$ is a $\Phi$-regular mapping, i.e., $\eta^*\Phi$ is a calculus.
\item $\eta(\calG)$ is a $\Phi$-regular subalgebra of $\calF$, i.e.,
$\Phi\res{\eta(\calG)}$ is a calculus. 
\end{aufzii}
\end{cor}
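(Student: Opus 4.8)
The plan is to deduce both implications directly from Lemma \ref{urc.l.pull-back}, using that—by the reformulation of Axiom (FC4)—a proto-calculus is a calculus precisely when its full set of bounded elements is an algebraic core. Throughout I write $\calE := \bdd(\eta^*\Phi, \calG)$ and $\calE_0 := \bdd(\Phi\res{\eta(\calG)}, \eta(\calG))$; by Lemma \ref{urc.l.pull-back}.a one has $\calE = \eta^{-1}(\calE_0)$ and $\eta(\calE) = \calE_0$. It is convenient to factor $\eta = \iota \circ \bar\eta$, with $\bar\eta \colon \calG \to \eta(\calG)$ the (surjective) corestriction and $\iota$ the inclusion, so that $\eta^*\Phi = \bar\eta^*\bigl(\Phi\res{\eta(\calG)}\bigr)$ and Lemma \ref{urc.l.pull-back} applies verbatim to the unital homomorphism $\bar\eta$ together with the proto-calculus $\Phi\res{\eta(\calG)}$ on $\eta(\calG)$.

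For ``(i)$\Rightarrow$(ii)'' I would simply invoke Lemma \ref{urc.l.pull-back}.e with $\calE = \bdd(\eta^*\Phi, \calG)$: if $\eta^*\Phi$ is a calculus then $\calE$ is an algebraic core for $\eta^*\Phi$, hence $\eta(\calE)$ is an algebraic core for $\Phi\res{\eta(\calG)}$. Since $\eta(\calE) = \calE_0$ is exactly the set of all $\Phi\res{\eta(\calG)}$-bounded elements, this says that $\Phi\res{\eta(\calG)}$ is a calculus.

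For the converse ``(ii)$\Rightarrow$(i)'' I must show that $\calE$ strongly determines $\eta^*\Phi$ at every $g \in \calG$, i.e. that $[g]_\calE$ determines $\eta^*\Phi$ at $g$. By Lemma \ref{urc.l.pull-back}.d (applied to $\bar\eta$, with the set $[g]_\calE \subseteq \calE$) this is equivalent to $\eta([g]_\calE)$ determining $\Phi\res{\eta(\calG)}$ at $\eta(g)$. The crucial step is the set-theoretic identity
\[ \eta\bigl([g]_\calE\bigr) = \bigl[\eta(g)\bigr]_{\calE_0}. \]
Here ``$\subseteq$'' is immediate from Lemma \ref{urc.l.pull-back}.c, while ``$\supseteq$'' is where the real work lies: given $e_0 \in \calE_0$ with $e_0\,\eta(g) \in \calE_0$, I would lift $e_0$ to some $e \in \calG$ via surjectivity of $\bar\eta$, note $e \in \bar\eta^{-1}(\calE_0) = \calE$, and then $\eta(eg) = e_0\,\eta(g) \in \calE_0$ forces $eg \in \calE$, so $e \in [g]_\calE$ maps to $e_0$. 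Granting this identity, hypothesis (ii) makes $\calE_0$ an algebraic core for $\Phi\res{\eta(\calG)}$, so $[\eta(g)]_{\calE_0}$ determines $\Phi\res{\eta(\calG)}$ at $\eta(g)$; by the identity so does $\eta([g]_\calE)$, and translating back through Lemma \ref{urc.l.pull-back}.d gives that $[g]_\calE$ determines $\eta^*\Phi$ at $g$. As $g$ was arbitrary, $\calE$ is an algebraic core for $\eta^*\Phi$, i.e. $\eta^*\Phi$ is a calculus.

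The only genuine obstacle is the inclusion ``$\supseteq$'' in the displayed identity, which is precisely where a failure of injectivity of $\eta$ could bite: one needs surjectivity of the corestriction $\bar\eta$ together with $\calE = \eta^{-1}(\calE_0)$ to lift a regularizer $e_0$ of $\eta(g)$ lying in $\calE_0$ to a regularizer $e$ of $g$ lying in $\calE$. Everything else is a routine translation between $\eta^*\Phi$ and $\Phi\res{\eta(\calG)}$ already packaged in Lemma \ref{urc.l.pull-back}; in particular, routing the argument through $\bar\eta$ rather than $\eta$ is what keeps the target calculus equal to $\Phi\res{\eta(\calG)}$, so that hypothesis (ii) can be applied verbatim.
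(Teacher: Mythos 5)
Your proof is correct, and it rests on the same pillar as the paper's own proof, namely Lemma \ref{urc.l.pull-back}; the difference is one of economy. The paper's proof is a one-liner: apply part d) of that lemma with $\calE := \bdd(\eta^*\Phi, \calG)$ and use part a) to identify $\eta(\calE)$ with $\bdd(\Phi\res{\eta(\calG)}, \eta(\calG))$. Since (FC4) asks for determination by the \emph{full} set of bounded elements, and for that particular set determination and strong determination coincide (one has $[g]_\calE = \calE \cap \reg(g, \eta^*\Phi)$, the intersection with the regularizers being built into the definition of determination), both implications fall out of the single ``if and only if'' in d). Your detour through strong determination is what creates the extra work: fixing the set $[g]_\calE$ \emph{before} invoking d) forces you to prove the identity $\eta([g]_\calE) = [\eta(g)]_{\calE_0}$, whose nontrivial half (lifting regularizers through the surjection $\bar\eta$, using $\calE = \eta^{-1}(\calE_0)$ from a)) re-proves by hand exactly the regularizer correspondence that is already packaged inside d). Likewise, your appeal to e) for (i)$\Rightarrow$(ii) is sound but heavier than necessary, since e) is itself derived from c) and d). One genuine merit of your version: by factoring $\eta = \iota \circ \bar\eta$ you make explicit that, for subsets of $\eta(\calG)$, ``determines $\Phi$ at $\eta(g)$'' and ``determines $\Phi\res{\eta(\calG)}$ at $\eta(g)$'' are the same condition --- a point the paper's one-line proof leaves implicit.
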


\begin{proof}
This follows from a) and d) of Lemma \ref{urc.l.pull-back} with
$\calE = \bdd(\eta^*\Phi)$.
\end{proof}

\vanish{
\begin{lem}\label{urc.l.Phi-reg}
Let  $\calF, \calG$ be unital algebras,
let $\Phi: \calF \to \Clo(X)$ be a calculus, and 
let $\eta: \calG \to \calF$ be a unital algebra homomorphism. 
Define $\calE := \eta(\calG) \cap \bdd(\calF, \Phi)$. 
Then $\calE = \eta(\bdd(\eta^*\Phi, \calG))$ and 
the following assertions are equivalent:
\begin{aufzii}
\item The mapping $\eta$ is a$\Phi$-regular
\item  The subalgebra $\eta(\calG)$ is $\Phi$-regular.
\item 
The set
\[ \calM := \{ g\in \bdd(\calF, \Phi) \suchthat 
[g]_\calE \,\,\text{is an anchor set}\}
\]
determines $\Phi$ on $\eta(\calG)$. 
\end{aufzii}
This is the case, e.g., if $\bdd(\calF, \Phi) \subseteq \eta(\calG)$. 
\end{lem}

\begin{proof}
(i)$\gdw$(ii): Note that $\calE = \bdd(\eta(\calG), \Phi)$ and 
\[ \calE':= \bdd(\calG, \eta^*\Phi) = \eta^{-1} \bdd(\calF, \Phi) 
= \eta^{-1}\calE.
\]
A short argument reveals: $\calE$ determines $\Phi$ on $\eta(\calG)$ 
if and only if $\calE'$ determines $\eta^*\Phi$ on $\calG$.
The first is equivalent to $\calG$ being $\Phi$-regular;
the latter is equivalent to $\eta$ being $\Phi$-regular. 

\prfnoi
(ii)$\dann$(iii):  
For each $e\in \calE$ one has $[e]_\calE = \calE$ (since $\calE$ is a
subalgebra of $\calF$) and this is an anchor set (since $\car \in
\calE$). It follows that $\calE \subseteq \calM$ and hence the claim.

\prfnoi
(iii)$\dann$(ii):  By definition, $\calM$ is anchored in $\calE$. So 
by Theorem \ref{anc.t.EM}.a) it follows that $\calE$ determines $\Phi$
on $\eta(\calG)$. 
\end{proof}

}

It seems that, in general, one cannot say much more.
However, here is an interesting special case, when one can simplify 
assumptions.

\begin{thm}\label{urc.t.pull-back-com}
Let $\calF$ be a commutative unital algebra and $\Phi: \calF \to
\Clo(X)$ a calculus. Let $\calG$ be a unital subalgebra of $\calF$
such that $\reg(g, \Phi)\cap  \calG$ is an anchor set for each
$g\in \calG$. Then $\calG$ is $\Phi$-regular, i.e., $\Phi\res{\calG}$ is a calculus.
\end{thm}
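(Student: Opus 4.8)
The plan is to establish Axiom (FC4) for the restriction $\Phi\res{\calG}$, which is already a proto-calculus; equivalently, I will show that $\calE := \bdd(\calG, \Phi\res{\calG})$ is an algebraic core for $\Phi\res{\calG}$. First I would unwind the relevant sets. Since $\calG$ is a subalgebra, $\calE = \calG \cap \bdd(\calF, \Phi)$, and for each $g \in \calG$ both $e$ and $eg$ lie in $\calG$, so that
\[ \reg(g, \Phi) \cap \calG = \{ e \in \calG \suchthat e,\, eg \in \bdd(\calF,\Phi)\} = [g]_\calE. \]
Thus the hypothesis says precisely that $[g]_\calE$ is an anchor set for every $g \in \calG$; in the language of Section~\ref{s.anc}, $\calG$ is anchored in $\calE$.

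Next I would fix $g \in \calG$ and aim to prove that $[g]_\calE$ determines $(\Phi\res{\calG})(g)$, i.e.\ that $\calE$ strongly determines it. The obvious route via Theorem~\ref{anc.t.main} is blocked: that theorem presupposes $\Phi\res{\calG}$ to be a calculus, which is exactly the conclusion we are after, so using it would be circular. The decisive idea---and the point where commutativity is indispensable---is to borrow determination from the \emph{ambient} calculus $\Phi$ instead of working inside $\calG$. Indeed $[g]_\calE \subseteq \reg(g, \Phi)$ (each such $e$ satisfies $e,\, eg \in \bdd(\calF,\Phi)$), it is an anchor set by hypothesis, and $\calF$ is commutative; hence Theorem~\ref{anc.t.com}, applied to the calculus $\Phi$ with $f = g$, shows that $[g]_\calE$ determines $\Phi(g)$.

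It then remains only to read this back downstairs. Because $[g]_\calE \subseteq \reg(g,\Phi)$, the determination relation \eqref{afc.e.determining} for $\Phi$ at $g$ over the set $[g]_\calE$ reduces to
\[ \Phi(g)x = y \quad \Gdw \quad \forall e \in [g]_\calE : \,\, \Phi(eg)x = \Phi(e)y \qquad (x,\,y \in X), \]
and since $g$ and all the $e,\, eg$ occurring here lie in $\calG$, where $\Phi$ and $\Phi\res{\calG}$ coincide, and since $\reg(g, \Phi\res{\calG}) = [g]_\calE$, this is verbatim the strong-determination condition \eqref{afc.e.strong-determining} for $\Phi\res{\calG}$ at $g$. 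As $g$ was arbitrary, $\calE$ is an algebraic core for $\Phi\res{\calG}$, so (FC4) holds and $\Phi\res{\calG}$ is a calculus, i.e.\ $\calG$ is $\Phi$-regular. I expect the only real obstacle to be the one just highlighted: recognizing that commutativity turns determination at each fixed $g$ into a statement that can be imported wholesale from the ambient calculus, thereby sidestepping the circular appeal to the non-commutative machinery.
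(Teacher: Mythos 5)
Your proof is correct and takes essentially the same route as the paper: the paper's entire proof is the remark that the statement ``follows immediately from Theorem \ref{anc.t.com}'', and your argument is exactly that application of Theorem \ref{anc.t.com} to the ambient calculus $\Phi$, supplemented by the definitional unwinding (namely $\reg(g,\Phi)\cap\calG = [g]_\calE = \reg(g,\Phi\res{\calG})$ and the transfer of the determination condition from $\Phi$ to $\Phi\res{\calG}$) that the paper leaves implicit. Your observation that invoking Theorem \ref{anc.t.main} for $\Phi\res{\calG}$ would be circular, so that determination must be imported from the ambient commutative calculus, is precisely the point of the paper's one-line proof.
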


\begin{proof}
This follows immediately from Theorem \ref{anc.t.com}.
\end{proof}

In view of Lemma \ref{urc.l.pull-back} we obtain 
the following consequence.

\begin{cor}\label{urc.c.com}
Let $\calF$ be a commutative unital algebra and $\Phi: \calF \to
\Clo(X)$ a calculus. Furthermore, let $\calG$ be a unital
algebra and $\eta: \calG \to \calF$ a unital homomorphism. 
Then $\eta$ is $\Phi$-regular if and only if for each $g\in \calG$
the set $\reg(\eta(g),\Phi) \cap \eta(\calG)$ is an
anchor set.
\end{cor}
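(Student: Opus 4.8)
The plan is to reduce the corollary to Theorem~\ref{urc.t.pull-back-com} by transferring the anchor-set hypothesis across the homomorphism $\eta$. First I would recall the dictionary established in Lemma~\ref{urc.l.pull-back}: by Corollary~\ref{urc.c.Phi-reg}, the mapping $\eta$ is $\Phi$-regular if and only if the subalgebra $\eta(\calG)$ is $\Phi$-regular, i.e.\ if and only if $\Phi\res{\eta(\calG)}$ is a calculus. Since $\calF$ is commutative, so is its subalgebra $\eta(\calG)$, and Theorem~\ref{urc.t.pull-back-com} (applied with $\eta(\calG)$ in place of $\calG$) tells us precisely when this restriction is a calculus: namely, when $\reg(h,\Phi)\cap\eta(\calG)$ is an anchor set for every $h\in\eta(\calG)$. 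So the whole corollary comes down to showing that this condition on elements $h=\eta(g)$ of $\eta(\calG)$ is equivalent to the stated condition that $\reg(\eta(g),\Phi)\cap\eta(\calG)$ is an anchor set for each $g\in\calG$.

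The key observation is that these two conditions are the same condition written two ways. Every element of $\eta(\calG)$ is of the form $\eta(g)$ for some $g\in\calG$, so quantifying over $h\in\eta(\calG)$ is identical to quantifying over $\eta(g)$ for $g\in\calG$; and the set appearing in both conditions is literally $\reg(\eta(g),\Phi)\cap\eta(\calG)$. Thus the equivalence is immediate once the reduction to Theorem~\ref{urc.t.pull-back-com} is in place, and there is essentially nothing left to compute.

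The one point requiring a word of care is the application of Theorem~\ref{urc.t.pull-back-com}: that theorem is stated for a unital \emph{subalgebra} $\calG$ of a commutative unital algebra $\calF$, and we are applying it to $\eta(\calG)$, which is indeed a unital subalgebra of the commutative unital algebra $\calF$ because $\eta$ is a unital algebra homomorphism (so $\eta(\calG)$ is closed under the operations and contains $\eta(\car)=\car$). Hence the hypotheses of Theorem~\ref{urc.t.pull-back-com} are met with $\eta(\calG)$ in the role of $\calG$. I expect the main (and only genuine) obstacle to be purely notational rather than mathematical: one must keep clearly separated the abstract algebra $\calG$, its image $\eta(\calG)\subseteq\calF$, and the regularizer sets computed inside $\calF$, so that the translation of ``for each $g\in\calG$'' into ``for each $h\in\eta(\calG)$'' is carried out correctly. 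Given the surjectivity of $\calG\to\eta(\calG)$, this bookkeeping is routine.

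\begin{proof}
By Corollary~\ref{urc.c.Phi-reg}, $\eta$ is $\Phi$-regular if and only if the subalgebra $\eta(\calG)$ of $\calF$ is $\Phi$-regular, i.e.\ if and only if $\Phi\res{\eta(\calG)}$ is a calculus. Since $\eta$ is a unital algebra homomorphism and $\calF$ is commutative, $\eta(\calG)$ is a commutative unital subalgebra of $\calF$. Theorem~\ref{urc.t.pull-back-com}, applied with $\eta(\calG)$ in place of $\calG$, shows that $\eta(\calG)$ is $\Phi$-regular if and only if $\reg(h, \Phi)\cap \eta(\calG)$ is an anchor set for each $h\in \eta(\calG)$. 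As every element of $\eta(\calG)$ is of the form $\eta(g)$ with $g\in \calG$, this last condition is exactly the requirement that $\reg(\eta(g), \Phi)\cap \eta(\calG)$ be an anchor set for each $g\in \calG$. This proves the claimed equivalence.
\end{proof}
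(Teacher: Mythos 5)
Your reduction is the one the paper itself uses: the corollary is derived from Theorem~\ref{urc.t.pull-back-com} by transferring regularity of $\eta$ to regularity of the subalgebra $\eta(\calG)$ (your Corollary~\ref{urc.c.Phi-reg} step), and the identification of ``for each $h\in\eta(\calG)$'' with ``for each $\eta(g)$, $g\in\calG$'' is indeed trivial. However, there is a genuine gap: you cite Theorem~\ref{urc.t.pull-back-com} as an equivalence, but it is only a one-way implication. It states that \emph{if} $\reg(h,\Phi)\cap\eta(\calG)$ is an anchor set for each $h\in\eta(\calG)$, \emph{then} $\Phi\res{\eta(\calG)}$ is a calculus; it asserts nothing in the converse direction. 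So as written, your argument proves only the ``if'' half of the corollary, and the ``only if'' half (regularity of $\eta$ implies the anchor condition) is claimed without justification.

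The missing direction is easy, but it comes from a different source, namely Axiom (FC4) together with Lemma~\ref{anc.l.pro-cal-deter}.a). Suppose $\Psi:=\Phi\res{\eta(\calG)}$ is a calculus and let $h\in\eta(\calG)$. Since $\eta(\calG)$ is multiplicatively closed and $\bdd(\eta(\calG),\Psi)=\bdd(\calF,\Phi)\cap\eta(\calG)$, one has $\reg(h,\Psi)=\reg(h,\Phi)\cap\eta(\calG)$. By (FC4) for $\Psi$, the set $\bdd(\eta(\calG),\Psi)$ determines $\Psi(h)$, and since the determining condition \eqref{afc.e.determining} only involves elements of $\reg(h,\Psi)$, the set $\reg(h,\Psi)$ itself determines $\Psi(h)$. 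A determining set is an anchor set because $\Psi(h)$ is single-valued (Lemma~\ref{anc.l.pro-cal-deter}.a)): if $x\in\bigcap_{e}\ker\Phi(e)$, the intersection taken over $e\in\reg(h,\Psi)$, then the pair $(0,x)$ satisfies the right-hand side of \eqref{afc.e.determining}, whence $\Psi(h)0=x$ and so $x=0$. With this observation added, your proof is complete and coincides with the paper's intended derivation.
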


\vanish{
\begin{proof}
In view of Lemma \ref{urc.l.pull-back} 
we may suppose without loss of generality
that $\calG \subseteq \calF$ and $\eta$ is the inclusion mapping.  
Fix $f\in \calG$ and suppose that $x,y\in X$ are such that
$\Phi(ef)x= \Phi(e)y$ for all $e\in \calG \cap \reg(f,\Phi)$.
For each $g\in \reg(f, \Phi)$ one has
\begin{align*}
  \Phi(e)\Phi(gf)x &= \Phi(egf)x = \Phi(gef)x = \Phi(g)\Phi(ef)x
= \Phi(g)\Phi(e)y
\\ & = \Phi(eg)y = \Phi(eg)y= \Phi(e)\Phi(g)y.
\end{align*}
By assumption, $\reg(f, \Phi)\cap \calG$ is an anchor set, hence it follows
that  $\Phi(gf)x= \Phi(g)y$. As $\reg(f, \Phi)$ determines
$\Phi(f)$, we obtain $\Phi(f)x = y$ as claimed.
\end{proof}
}

\medskip

\subsection{Compatibility and Composition Rules}

Suppose one has set up a functional calculus $\Phi= ( f\mapsto f(A))$ for an
operator $A$ and a second functional calculus $\Psi= (g \mapsto g(B))$ for 
an operator $B$ which is of the form $B = f(A)$. Then one would expect
a ``composition rule'' of the form $g(B) = (g\nach f)(A)$. This amounts to
the identity $\Psi = \eta^*\Phi$, where $\eta = (g \mapsto g\nach
f)$ is an algebra homomorphism that links the domain algebras of the
two calculi. The following theorem, which basically is just a
combination of results obtained so far, 
yields criteria for this composition rule to hold true.

\begin{thm}\label{urc.t.com}
Let $\calF$ and $\calG$ be unital algebras and
$\eta: \calG \to \calF$ a unital algebra homomorphism. 
Furthermore, let $\Phi: \calF \to \Clo(X)$ and
$\Psi: \calG\to \Clo(X)$ be proto-calculi, and let 
$\calE$ be an algebraic core for $\Psi$ such that  
\[  \Phi(\eta(e)) = \Psi(e) \qquad (e\in \calE).
\]
Then  
the following statements are equivalent:
\begin{aufzii}
\item  $\Phi \nach \eta = \Psi$.
\item  $\eta(\calG)$ is a $\Phi$-regular subalgebra of $\calF$.
\item  $\eta(\calE)$ is an algebraic core for the restriction
of $\Phi$ to $\eta(\calG)$.
\end{aufzii}
Moreover, {\rm (i)-(iii)} hold true if, e.g.,
$\Phi$ is a calculus and $\calF$ is commutative.
\end{thm}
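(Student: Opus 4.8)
The plan is to reduce everything to the uniqueness lemma (Lemma~\ref{urc.l.uni}), the pull-back lemma (Lemma~\ref{urc.l.pull-back}) and Corollary~\ref{urc.c.Phi-reg}, and two preliminary observations carry most of the weight. First, since $\calE$ is an algebraic core for $\Psi$ and $\calE\subseteq\bdd(\calG,\Psi)$, the set $\bdd(\calG,\Psi)$ is a superset of an algebraic core, hence itself an algebraic core (Remark~\ref{det.r.det-super}); by Axiom (FC4) this forces $\Psi$ to be an honest \emph{calculus}, not merely a proto-calculus. Second, $\eta^*\Phi$ is always a proto-calculus, and since $\eta^*\Phi$ and $\Psi$ agree on the algebraic core $\calE$ of $\Psi$, Lemma~\ref{urc.l.uni} yields the inclusion $\eta^*\Phi(g)\subseteq\Psi(g)$ for every $g\in\calG$, with no extra hypotheses. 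Thus statement (i) is equivalent to the reverse inclusion $\Psi\subseteq\eta^*\Phi$, and it is this half that all the work must produce.

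With these in hand I would run the cycle (i)$\Rightarrow$(iii)$\Rightarrow$(ii)$\Rightarrow$(i). For (i)$\Rightarrow$(iii): if $\eta^*\Phi=\Psi$, then $\calE$ is an algebraic core for $\eta^*\Phi$, and Lemma~\ref{urc.l.pull-back}.e) immediately gives that $\eta(\calE)$ is an algebraic core for $\Phi\res{\eta(\calG)}$. For (iii)$\Rightarrow$(ii): if $\eta(\calE)$ is an algebraic core for $\Phi\res{\eta(\calG)}$, then so is the larger set $\bdd(\eta(\calG),\Phi\res{\eta(\calG)})$, so $\Phi\res{\eta(\calG)}$ satisfies (FC4) and is a calculus, i.e.\ $\eta(\calG)$ is $\Phi$-regular.

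The main obstacle is (ii)$\Rightarrow$(i). Here $\eta(\calG)$ is $\Phi$-regular, so by Corollary~\ref{urc.c.Phi-reg} the pull-back $\eta^*\Phi$ is a calculus, whence $\bdd(\calG,\eta^*\Phi)$ is an algebraic core for it. The delicate point I would have to verify is that $\Psi$ and $\eta^*\Phi$ agree not just on $\calE$ but on all of $\bdd(\calG,\eta^*\Phi)$: for such $g$ the operator $\eta^*\Phi(g)$ is bounded and everywhere defined, and from the universal inclusion $\eta^*\Phi(g)\subseteq\Psi(g)$ together with the fact that $\Psi(g)$ is a (single-valued, closed) operator one is forced to conclude $\dom\Psi(g)=X$ and $\Psi(g)=\eta^*\Phi(g)$. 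Then Lemma~\ref{urc.l.uni}, applied with the roles reversed and the algebraic core taken to be $\bdd(\calG,\eta^*\Phi)$ for $\eta^*\Phi$, gives $\Psi(g)\subseteq\eta^*\Phi(g)$ for all $g$; combined with the preliminary inclusion this is exactly (i).

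Finally, for the commutative addendum I would establish (ii) directly via Corollary~\ref{urc.c.com}, so I must show that $\reg(\eta(g),\Phi)\cap\eta(\calG)$ is a $\Phi$-anchor set for each $g\in\calG$. The natural candidate inside it is $\eta([g]_\calE)$: since $\calE$ strongly determines $\Psi(g)$, the set $[g]_\calE$ determines $\Psi(g)$ and is therefore a $\Psi$-anchor set (Lemma~\ref{anc.l.pro-cal-deter}.a); as $\eta^*\Phi$ and $\Psi$ have the same kernels on $[g]_\calE$, it is equally an $\eta^*\Phi$-anchor set, so $\eta([g]_\calE)$ is a $\Phi$-anchor set by Lemma~\ref{urc.l.pull-back}.b). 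For $e\in[g]_\calE$ one has $e,eg\in\calE$, hence $\eta(e),\eta(e)\eta(g)=\eta(eg)\in\bdd(\calF,\Phi)$, so $\eta(e)\in\reg(\eta(g),\Phi)\cap\eta(\calG)$. Thus this last set contains the anchor set $\eta([g]_\calE)$ and is itself an anchor set (Lemma~\ref{anc.l.pro-cal-deter}.b); here the commutativity of $\calF$ enters through Corollary~\ref{urc.c.com}, which then yields (ii), and with the equivalence already proved, (i)--(iii) all hold.
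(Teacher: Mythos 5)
Your proposal is correct, and its overall skeleton matches the paper's proof: the same cycle (i)$\Rightarrow$(iii)$\Rightarrow$(ii)$\Rightarrow$(i), with (i)$\Rightarrow$(iii) via Lemma~\ref{urc.l.pull-back}.e), (iii)$\Rightarrow$(ii) a fortiori from (FC4), and the commutative addendum via Corollary~\ref{urc.c.com} (your treatment of that addendum is in fact more careful than the paper's terse version: you correctly work with $\reg(\eta(g),\Phi)\cap\eta(\calG)$, which is what Corollary~\ref{urc.c.com} actually requires, and you supply the kernel-transfer argument the paper leaves implicit). Where you genuinely diverge is (ii)$\Rightarrow$(i). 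The paper notes that $\eta^*\Phi$ and $\Psi$ are both calculi agreeing on the anchor set $\calE$ and then invokes the Uniqueness Theorem~\ref{urc.t.uni}; since $\calE$ need not be multiplicative, that theorem in turn passes to the semigroup generated by $\calE$ and leans on the non-commutative anchoring result, Theorem~\ref{anc.t.main}. You instead exploit the one-sided inclusion $\eta^*\Phi(g)\subseteq\Psi(g)$ (valid for all $g$ by Lemma~\ref{urc.l.uni}, since $\calE$ is a core for $\Psi$), upgrade it to equality on $\bdd(\calG,\eta^*\Phi)$ by the elementary observation that an everywhere-defined bounded operator contained in a single-valued relation must equal it, and then apply Lemma~\ref{urc.l.uni} a second time with the roles reversed, using $\bdd(\calG,\eta^*\Phi)$ as the core. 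This double-inclusion argument is self-contained and bypasses Theorem~\ref{anc.t.main} entirely, so it is more elementary; the paper's route is shorter on the page only because the Uniqueness Theorem has already been proved. Both arguments are valid.
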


\begin{proof}
(i)$\dann$(iii): Since $\calE$ is an algebraic core
for $\Psi$ and, by (i), $\Psi = \eta^*\Phi$, 
the set $\eta(\calE)$ must be an algebraic core
for $\Phi$ on $\eta(\calG)$, by e) of Lemma \ref{urc.l.pull-back}.

\prfnoi
(iii)$\dann$(ii): If (iii) holds then (ii) follows a fortiori.

\prfnoi
(ii)$\dann$(i): If (ii) holds that $\eta^*\Phi$ is a calculus. 
Also, by hypothesis, $\Psi$ is a calculus. These calculi
agree on $\calE$, and this is an anchor (since it
is an algebraic core for $\Psi$). Hence, by the Uniqueness
Theorem \ref{urc.t.uni}, $\eta^*\Phi = \Psi$, i.e., (i). 

\prfnoi
Finally, suppose that $\Phi$ is a calculus and $\calF$ is
commutative. Let $g \in \calG$. Then $[g]_\calE$ is a  $\Psi$-anchor set.
Hence, $[\eta(g)]_{\eta(\calG)}$ is a $\Phi$-anchor set.
By Corollary \ref{urc.c.com}, $\eta$ is $\Phi$-regular, i.e., (ii). \end{proof}

See also  Theorem \ref{ext.t.succ-comp} below for 
more refined compatibility criteria.

\section{Algebraic Extension}\label{s.ext}

From now on, we suppose that $\calE$, $\calF$ and $\Phi$ are such that
\begin{aufziii}
\item $\calF$ is a unital algebra,
\item $\calE$ is a (not necessarily unital) subalgebra of $\calF$,
\item $\Phi: \calE \to \BL(X)$ is an algebra representation. 
\end{aufziii}

Our  goal is to
give conditions on $\calF$ such that 
a given representation $\Phi: \calE \to \BL(X)$ can be extended 
to an $\calF$-calculus in a unique way. 
A glance at Theorem \ref{anc.t.main} leads us to hope that it might be
helpful to require in addition to 1)--3)
also: 
\begin{aufziii}\setcounter{aufziii}{3}
\item Each $f\in \calF$ is anchored in $\calE$.
\end{aufziii}
The next result tells that under these assumptions
there is indeed a unique calculus on $\calF$ extending $\Phi$.

\begin{thm}[Extension Theorem]\label{ext.t.ext}
Let $\calF$ be a unital algebra and $\calE \subseteq \calF$ a
subalgebra. Furthermore, let $X$ be a Banach space and
let $\Phi: \calE \to \BL(X)$ be an algebra
homomorphism such that $\calF$ is anchored in $\calE$.
Then there is a unique calculus 
\[ \fourier{\Phi}:  \calF \to \Clo(X)
\]
such that $\fourier{\Phi}\res{\calE} = \Phi$. 
\end{thm}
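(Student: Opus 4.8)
The plan is to build $\widehat{\Phi}$ by regularization and then reduce uniqueness to the Uniqueness Theorem. For each $f\in\calF$ I would define the linear relation
\[
\widehat{\Phi}(f) := \{(x,y)\in X\oplus X \suchthat \Phi(ef)x = \Phi(e)y \text{ for all } e\in[f]_\calE\}.
\]
Since $\calF$ is anchored in $\calE$, the set $[f]_\calE$ is a nonempty anchor set. Each condition $\Phi(ef)x=\Phi(e)y$ cuts out a closed subspace of $X\oplus X$, so $\widehat{\Phi}(f)$ is closed; and if $(0,y)\in\widehat{\Phi}(f)$ then $\Phi(e)y=0$ for all $e\in[f]_\calE$, whence $y=0$ by the anchor property, so $\widehat{\Phi}(f)\in\Clo(X)$. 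For $f\in\calE$ one has $[f]_\calE=\calE$, and the homomorphism property $\Phi(ef)=\Phi(e)\Phi(f)$ with the anchor property of $\calE$ gives $\widehat{\Phi}(f)=\Phi(f)$; in particular (FC1) holds and $\widehat{\Phi}$ extends $\Phi$.

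The heart of the matter is (FC2) and (FC3), where the obstacle is that $[f]_\calE,[g]_\calE,[fg]_\calE,[f+g]_\calE$ need not coincide. The technique I would use throughout is to test a desired identity against an auxiliary anchor set by multiplying by a well-chosen regularizer and then cancelling. For $\widehat{\Phi}(f)\widehat{\Phi}(g)\subseteq\widehat{\Phi}(fg)$, given $(x,y)\in\widehat{\Phi}(g)$, $(y,w)\in\widehat{\Phi}(f)$ and a fixed $c\in[fg]_\calE$, I would multiply by any $e\in[cf]_\calE$, which is an anchor set because $cf\in\calF$. Then $ec\in[f]_\calE$ and $ecf\in[g]_\calE$, so $\Phi(e)\Phi(cfg)x=\Phi(ecfg)x=\Phi(ecf)y=\Phi(ec)w=\Phi(e)\Phi(c)w$; as $[cf]_\calE$ separates points this yields $\Phi(cfg)x=\Phi(c)w$, i.e. $(x,w)\in\widehat{\Phi}(fg)$. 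The scalar part of (FC2) is immediate from $[\lambda f]_\calE=[f]_\calE$ for $\lambda\neq0$, and the additivity inclusion follows the same pattern: for $c\in[f+g]_\calE$ and $e\in[cf]_\calE$ one gets $ecg=ec(f+g)-ecf\in\calE$ for free, so $ec\in[f]_\calE\cap[g]_\calE$ and the identity drops out after cancelling $\Phi(e)$.

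The step I expect to be the main obstacle is the domain equality in (FC3), as it is the only place where a genuinely new regularizer must be produced rather than reused. The inclusion ``$\subseteq$'' is immediate from the product inclusion; for ``$\supseteq$'' I would take $x\in\dom(\widehat{\Phi}(g))\cap\dom(\widehat{\Phi}(fg))$, set $y:=\widehat{\Phi}(g)x$ and $v:=\widehat{\Phi}(fg)x$, and show $(y,v)\in\widehat{\Phi}(f)$. Fixing $e\in[f]_\calE$ and multiplying by any $b\in[efg]_\calE$ (an anchor set since $efg\in\calF$) makes both $be\in[fg]_\calE$ and $bef\in[g]_\calE$, so $\Phi(b)\Phi(ef)y=\Phi(bef)y=\Phi(befg)x=\Phi(be)v=\Phi(b)\Phi(e)v$; cancelling over $[efg]_\calE$ gives $\Phi(ef)y=\Phi(e)v$, as required.

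Finally, (FC4) and uniqueness are formal. The inclusion $[f]_\calE\subseteq\reg(f,\widehat{\Phi})$ is clear, so the defining equivalence for $\widehat{\Phi}(f)$ reads: $\widehat{\Phi}(f)x=y$ iff $\widehat{\Phi}(ef)x=\widehat{\Phi}(e)y$ for all $e\in[f]_\calE\cap\reg(f,\widehat{\Phi})$. This says exactly that $[f]_\calE$ determines $\widehat{\Phi}(f)$, i.e. $\calE$ strongly determines every $\widehat{\Phi}(f)$, so $\calE$ is an algebraic core; since $\calE\subseteq\bdd(\calF,\widehat{\Phi})$, the latter is a core too (Remark \ref{det.r.det-super}), giving (FC4). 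Thus $\widehat{\Phi}$ is a calculus extending $\Phi$, and uniqueness follows directly from the Uniqueness Theorem \ref{urc.t.uni}: any two such calculi agree on $\calE$ with values in $\BL(X)$, and $\calF$ is anchored in $\calE$.
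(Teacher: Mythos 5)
Your proposal is correct and follows essentially the same route as the paper: the same definition of $\widehat{\Phi}(f)$ by testing against $[f]_\calE$, the same anchor-set cancellation arguments for (FC2) and (FC3) (your single auxiliary regularizer $e\in[cf]_\calE$ in the product inclusion is in fact a slight streamlining of the paper's two nested layers), and the same appeal to the Uniqueness Theorem for uniqueness. One small wrinkle: since $\calE$ need not contain $\car$, (FC1) is not literally a special case of the statement $\widehat{\Phi}\res{\calE}=\Phi$ — but your defining formula applied to $f=\car$, where $[\car]_\calE=\calE$ is an anchor set and $e\car=e$, yields $\widehat{\Phi}(\car)=\Id$ directly, which is exactly the paper's argument.
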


\begin{proof}
Uniqueness follows directy from Theorem \ref{urc.t.uni}. Moreover, since
$\calE$ is multiplicative, for each $f\in \calF$ the set
$[f]_\calE$ must determine $\fourier{\Phi}(f)$.  Hence,
for the existence proof we have no other choice than  to {\em define}
\beq\label{afc.eq.ext-def} 
\fourier{\Phi}(f)x = y \quad \defiff\quad \forall\, e\in [f]_\calE
: \Phi(ef)x= \Phi(e)y
\eeq
for any $x,y\in X$ and $f\in \calF$.  Note that since $[f]_\calE$ is
an anchor set, $\fourier{\Phi}(f) \in \Clo(X)$.
It remains to show that
$\fourier{\Phi}$ extends $\Phi$ and satisfies (FC1)---(FC4).

\prfnoi
(FC1):\ By hypothesis,  $[\car]_\calE = \calE$ is an anchor set.
Hence, $x=y$ is equivalent to 
\[ \Phi(e)x= \Phi(e)y  \qquad
\text{for all $e\in \calE$},
\]
which, by definition \eqref{afc.eq.ext-def} 
is equivalent to $\fourier{\Phi}(\car)x= y$.

\prfnoi
Next, let us show that $\fourier{\Phi}$ extends $\Phi$.
To that end,  let $f\in \calE$. Then $[f]_\calE = \calE$, and
$\fourier{\Phi}(f)x = y$ is equivalent to 
\[ \Phi(e)y = \Phi(ef)x = \Phi(e)\Phi(f)x \quad \text{for all $e\in \calE$},
\]
which is equivalent to $y= \Phi(f)x$ (since $\calE$ is an anchor).

\prfnoi
(FC2): Let $\lambda \in \C$ and $f\in \calF$ and take $x,y\in X$ with
$\lambda\fourier{\Phi}(f)x = y$. We need to show that 
$\fourier{\Phi}(\lambda f)x = y$. This is clear if $\lambda = 0$.
If $\lambda \neq 0$ we find $\fourier{\Phi}(f)x = \lambda^{-1}y$ and
hence $\Phi(ef)x= \Phi(e)(\lambda^{-1}y)$, or better
\[ \Phi(e (\lambda f))x = \Phi(e)y
\]
for every $e\in [f]_\calE = [\lambda f]_\calE$. And the latter
statement
just tells that $\fourier{\Phi}(\lambda f)x = y$, as desired.

Now pick $f, g\in \calF$ and suppose that $\fourier{\Phi}(f)x= y$ and
$\fourier{\Phi}(g)x = z$. Take $h\in [f + g]_\calE$ and 
$e\in [hf]_\calE$. Then $eh\in [f]_\calE \cap [g]_\calE$ and hence
\[ \Phi(ehf)x = \Phi(eh)y \quad \text{and}\quad \Phi(ehg)x =
\Phi(eh)z.
\]
This yields
\[ \Phi(e) \Phi(h(f+g))x = \Phi(ehf)x + \Phi(ehg)x= 
\Phi(eh)(y+z) = \Phi(e) \Phi(h)(y+z).
\]
 Since $[hf]_\calE$ is a anchor set, it follows that 
\[ \Phi(h(f+g))x = \Phi(h)(z+y)
\]
and since $h\in [f+g]_\calE$ was arbitrary, we arrive at
$\fourier{\Phi}(f+g)x = y$.

\prfnoi
(FC3):  Let $\fourier{\Phi}(g)x = y$ and $\fourier{\Phi}(f)y = z$, and 
let $h\in [fg]_\calE$. Then for each $e\in [hfg]_\calE$ and
$e'\in[ehf]_\calE$
one has $e'e h\in [f]_\calE$ and $e'ehf \in [g]_\calE$ and hence
\[ \Phi(e') \Phi(e) \Phi(hfg)x = 
\Phi(e'e h fg)x = \Phi(e'ehf)y = \Phi( e'eh)z = \Phi(e')\Phi(e)\Phi(h)z.
\]
Since $[ehf]_\calE$ is anchor set,  $\Phi(e) \Phi(hfg)x = \Phi(e)\Phi(h)z$,
and since $[hfg]_\calE$ is an anchor set, $\Phi(hfg)x = \Phi(h)z$. All
in all we conclude that $\fourier{\Phi}(fg)x = z$. This proves
the inclusion
\[ \fourier{\Phi}(f) \fourier{\Phi}(g) \subseteq \fourier{\Phi}(fg).
\]
A corollary to that is the domain inclusion
\[ \dom(\fourier{\Phi}(f) \fourier{\Phi}(g)) \subseteq 
\dom(\fourier{\Phi}(g)) \cap \dom(\fourier{\Phi}(fg)).
\]
For the converse, suppose that $x \in \dom(\fourier{\Phi}(g)) \cap
\dom(\fourier{\Phi}(fg))$ and define $y,z \in X$ by
\[ \fourier{\Phi}(g)x = y\quad  \text{and} \quad \fourier{\Phi}(fg)x = z.
\]
Let $e\in [f]_\calE$ and $e'\in [efg]_\calE$. Then $e'e f\in [g]_\calE$ and  
hence $\Phi(e'efg)x = \Phi(e'ef)y$. Also, $e'e\in [fg]_\calE$ and hence 
$\Phi(e'efg)x = \Phi(e'e)z$. It follows that 
\[ \Phi(e')\Phi(ef)y = \Phi(e'ef)y = \Phi(e'efg)x = \Phi(e'e)z=
\Phi(e') \Phi(e)z
\]
Since $[efg]_\calE$ is an anchor set, $\Phi(ef)y = \Phi(e)z$.
Since $e\in [f]_\calE$ was arbitrary, $\fourier{\Phi}(f)y = z$, and 
hence $x\in \dom(\fourier{\Phi}(f) \fourier{\Phi}(g))$.

\prfnoi
(FC4) is satisfied by construction. This concludes the proof.
\end{proof}

\medskip

\subsection{The Maximal Anchored Subalgebra}

In practice, $\calF$ may be too large and may fail to satisfy the
anchor-condition 4).
In this case one might look for the maximal subalgebra of $\calF$ which is anchored in $\calE$. 
However, it is not obvious that such an object exists. 
To see that it does,  let us define
\beq\label{ext.eq.ancgen-def} 
 \ancgen{\calE,\calF,\Phi} := \{ f\in \calF \suchthat \forall\,
e\in \calE : [ef]_\calE
\,\,\text{is a $\Phi$-anchor set}\}.
\eeq

\begin{lem}\label{ext.l.non-deg}
Let $\calF$ be a unital algebra, $\calE\subseteq \calF$ a 
subalgebra and $\Phi: \calE \to \BL(X)$ an algebra representation.
Then the following statements are equivalent:
\begin{aufzii}
\item $\calE$ is an anchor set.
\item $\car$ is anchored in $\calE$.
\item $\calE \neq \leer$ and $\ancgen{\calE,\calF,\Phi} \neq \leer$. 
\end{aufzii}
\end{lem}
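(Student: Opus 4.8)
The plan is to reduce everything to two elementary observations and then run a short chain of implications. The first observation is that, since $\calE$ is a subalgebra of $\calF$ and hence closed under products, one has $[e]_\calE = \calE$ for \emph{every} $e \in \calE$; indeed $e'e \in \calE$ for all $e' \in \calE$, so the defining condition for $[e]_\calE$ imposes no restriction. In particular $[\car]_\calE = \calE$. The second observation is the ``superset'' stability of anchor sets: if $\calM \subseteq \calN \subseteq \calE$ and $\calM$ is an anchor set, then $\calN$ is one too, because $\bigcap_{e \in \calN} \ker(\Phi(e)) \subseteq \bigcap_{e \in \calM} \ker(\Phi(e)) = \{0\}$.

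With these in hand, the equivalence (i)$\iff$(ii) costs nothing: by definition ``$\car$ is anchored in $\calE$'' means that $[\car]_\calE$ is an anchor set, and $[\car]_\calE = \calE$, so (ii) is verbatim the statement that $\calE$ is an anchor set.

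Next I would prove (i)$\Rightarrow$(iii) by exhibiting a concrete element of $\ancgen{\calE,\calF,\Phi}$, namely $\car$. Fixing $e \in \calE$, one has $e\car = e$ and hence $[e\car]_\calE = [e]_\calE = \calE$, which is an anchor set by (i). As $e$ was arbitrary, $\car$ satisfies the defining condition of $\ancgen{\calE,\calF,\Phi}$, so this set is nonempty; together with $\calE \neq \leer$ (forced by (i)) this gives (iii).

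Finally, for (iii)$\Rightarrow$(i) I would use that $\calE \neq \leer$ to pick some $e \in \calE$ and use $\ancgen{\calE,\calF,\Phi} \neq \leer$ to pick some $f$ in it. By definition of $\ancgen{\calE,\calF,\Phi}$ the set $[ef]_\calE$ is an anchor set, and clearly $[ef]_\calE \subseteq \calE$; the superset observation then forces $\calE$ itself to be an anchor set, which is (i). I expect no real obstacle here, as the whole argument is bookkeeping with the definitions; the only points deserving care are the computation $[e]_\calE = \calE$ (which relies on multiplicative closedness, not unitality, of $\calE$) and the explicit hypothesis $\calE \neq \leer$ in (iii) --- without the latter the quantifier defining $\ancgen{\calE,\calF,\Phi}$ would be vacuously met by every $f \in \calF$, and (iii)$\Rightarrow$(i) would break down.
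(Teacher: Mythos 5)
Your proof is correct, and it is exactly the straightforward definitional unwinding the paper intends --- the paper's own proof consists of the single word ``Straightforward''. Your two observations ($[e]_\calE = \calE$ for every $e\in\calE$, hence $[\car]_\calE = \calE$, together with superset stability of anchor sets) are precisely the needed ingredients, and your closing remark on why the hypothesis $\calE \neq \leer$ in (iii) cannot be dropped is a worthwhile addition.
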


\begin{proof}
Straightfoward.
\end{proof}

The algebra representation $\Phi: \calE \to \BL(X)$ is called {\emdf
  non-degenerate}
if (i)--(iii) from Lemma \ref{ext.l.non-deg} are satisfied, otherwise {\emdf degenerate}.

\begin{rem}\label{ext.r.degenerate}
If $\Phi$ is degenerate then there are two possibilities: 1st case: 
$\car \notin \calE$. Then   $\calE ':= \calE \oplus \C \car$ is a unital subalgebra of $\calF$
and by
\[ \fourier{\Phi}(f) := \Phi(e) + \lambda \Id,\qquad f = e +
\lambda\car,\, e\in \calE,\,\lambda \in \C
\]
a unital representation $\fourier{\Phi}: \calE \oplus \C\car \to \BL(X)$ is
defined. This new representation is clearly non-degenerate if $X \neq \{0\}$.
2nd case:  $\car \in \calE$. Then $P := \Phi(\car)$ is a projection
and one
can restrict the representation to $\BL(Y)$, where $Y:= \ran(P)$.  

All in all we see that degenerate representations can be neglected.
\end{rem}

\begin{thm}\label{ext.t.ancgen}
Let $\calF$ be a unital algebra, $\calE \subseteq \calF$ a subalgebra
and $\Phi: \calE \to \BL(X)$ a non-degenerate algebra
representation.  Then $\ancgen{\calE, \calF, \Phi}$ is
a unital subalgebra of $\calF$ containing $\calE$ and anchored in $\calE$. 
Moreover, $\ancgen{\calE, \calF, \Phi}$ contains each unital subalgebra of $\calF$  with these properties.
\end{thm}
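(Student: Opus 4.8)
The plan is to verify, for $\calA := \ancgen{\calE,\calF,\Phi}$, the three listed features (unital subalgebra, containing $\calE$, anchored in $\calE$) and then its maximality. The one technical device behind everything is iterated anchoring. Since $\calE = \bdd(\calE,\Phi)$, the representation $\Phi$ is a (bounded) proto-calculus on $\calE$, so the anchor-set halves of Lemma \ref{anc.l.pro-cal-deter}.b),c) are available: a superset of an anchor set is an anchor set, and if $\calB\subseteq\calE$ is an anchor set and for each $a\in\calB$ one is given an anchor set $\calC_a\subseteq\calE$, then $\bigcup_{a\in\calB}\calC_a a$ is an anchor set. All products occurring below stay inside $\calE$, so these apply directly. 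Non-degeneracy enters through Lemma \ref{ext.l.non-deg}, which guarantees that $\calE$ itself is an anchor set; this is the base case of every iteration.

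First I would dispatch the easy points. For $e\in\calE$ one has $[e]_\calE=\calE$, so both $\car\in\calA$ and $\calE\subseteq\calA$ follow at once from the definition of $\calA$ together with $\calE$ being an anchor set. To see that $\calA$ is anchored in $\calE$, fix $f\in\calA$ and apply the iterated principle with $\calB:=\calE$ and, for $e\in\calE$, $\calC_e:=[ef]_\calE$ (an anchor set because $f\in\calA$). Since $a\in[ef]_\calE$ gives $(ae)f=a(ef)\in\calE$, i.e. $ae\in[f]_\calE$, we get $\bigcup_{e\in\calE}\calC_e\,e\subseteq[f]_\calE$, whence $[f]_\calE$ is an anchor set.

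The heart of the matter is closure under the algebra operations. Stability under scalars is immediate from $[\lambda(ef)]_\calE=[ef]_\calE$ for $\lambda\neq0$ (and $[0]_\calE=\calE$). For a product $fg$ with $f,g\in\calA$ and $e\in\calE$, I would show $[efg]_\calE$ is an anchor set by taking $\calB:=[ef]_\calE$ (anchor, as $f\in\calA$) and, for $a\in\calB$, $\calC_a:=[(aef)g]_\calE$, which is an anchor set because $aef\in\calE$ and $g\in\calA$; one checks $ca\in[efg]_\calE$ for $c\in\calC_a$ (indeed $(ca)(efg)=c(aef)g\in\calE$), so the principle gives $fg\in\calA$. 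The additive case is the main obstacle, precisely because $[e(f+g)]_\calE$ does not relate simply to $[ef]_\calE$ and $[eg]_\calE$, and the intersection of two anchor sets is in general not an anchor set, so one cannot merely intersect. The fix is to manufacture enough elements inside $[ef]_\calE\cap[eg]_\calE$ by a second layer: with $\calB:=[ef]_\calE$ and $\calC_a:=[(ae)g]_\calE$ (anchor, since $ae\in\calE$ and $g\in\calA$), every $c\in\calC_a$ satisfies $ca\in[ef]_\calE$ automatically (as $aef\in\calE$) and $ca\in[eg]_\calE$ (as $c(aeg)\in\calE$). Hence $\bigcup_a\calC_a a\subseteq[ef]_\calE\cap[eg]_\calE$, so the principle shows $[ef]_\calE\cap[eg]_\calE$ is an anchor set; being contained in $[e(f+g)]_\calE$, the latter is an anchor set too, and $f+g\in\calA$.

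Finally, maximality is a short verification. Let $\calG\subseteq\calF$ be any unital subalgebra containing $\calE$ and anchored in $\calE$. For $f\in\calG$ and $e\in\calE\subseteq\calG$ the product $ef$ again lies in $\calG$, so $[ef]_\calE$ is an anchor set because $\calG$ is anchored in $\calE$; as this holds for every $e\in\calE$, we obtain $f\in\calA$, i.e. $\calG\subseteq\calA$. Combined with the first three parts, this identifies $\calA$ as the largest unital subalgebra of $\calF$ that contains $\calE$ and is anchored in $\calE$. I expect the only genuinely delicate step to be the additivity argument, exactly because anchoredness is not stable under intersection and must be recovered through the two-step construction.
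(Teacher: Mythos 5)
Your proof is correct and takes essentially the same route as the paper's: the same iterated-anchoring principle (the anchor-set parts of Lemma \ref{anc.l.pro-cal-deter}.b),c)) applied to the same inclusions, with only the cosmetic difference that you fold the prefix $e\in\calE$ directly into the sum and product arguments, where the paper first shows $[f+g]_\calE$ and $[fg]_\calE$ are anchor sets and then substitutes $df$, $dg$ using $\calE\cdot\ancgen{\calE,\calF,\Phi}\subseteq\ancgen{\calE,\calF,\Phi}$. The anchoredness and maximality steps coincide with the paper's essentially verbatim.
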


\begin{proof}
For the proof we abbreviate $\calF' := \ancgen{\calE,\calF,\Phi}$.

\prfnoi
Suppose that $\calF_0$ is a unital subalgebra of $\calF$ that contains
$\calE$ and is anchored in $\calE$. If $f\in \calF_0$ and $e\in \calE$
then  $ef\in \calF_0$ again and hence $[ef]_\calE$ is an anchor set.
This shows that $\calF_0 \subseteq \calF'$.

\prfnoi
As $\Phi$ is non-degenerate, $\calE \subseteq \calF'$ and $\car \in
\calF'$. Let $f\in \calF'$. Then 
\[  \bigcup_{e\in \calE} [ef]_\calE e \subseteq [f]_\calE.
\]
For each $e\in \calE$, $[ef]_\calE$ is an anchor set (since $f\in
\calF'$) and $\calE$ is an anchor set (since $\Phi$ is
non-degenerate). It follows that $[f]_\calE$ is an anchor set as
well. Since $f\in \calF'$ was arbitrary, $\calF'$ is anchored in $\calE$.

\prfnoi
It remains to show that $\calF'$ is a subalgebra of $\calF$. To this
end, fix $f,g\in \calF'$. Then 
\[  \bigcup_{e\in [f]_\calE} [eg]_\calE e \subseteq [f]_\calE \cap
[g]_\calE \subseteq [f+g]_\calE.
\]
It follows that $[f+g]_{\calE}$ is an anchor set.  Since by definition
$\calE \cdot \calF' \subseteq \calF'$, it follows that 
$[d(f+g)]_\calE = [df  + dg]_\calE$ is an anchor set  for each $d\in
\calE$. Hence, $f+g \in \calF'$. 

Likewise, the inclusion 
\[     \bigcup_{e\in [f]_\calE} [efg]_\calE e \subseteq [fg]_\calE
\]  
implies that $[fg]_\calE$ is an anchor set. 
Since as above one can replace here $f$ by $df$ for each $d\in \calE$, it follows that $fg\in \calF'$. 
\end{proof}

\begin{rem}\label{ext.r.ancgen}
Let, as before,  $\calF$ be a unital algebra, $\calE \subseteq \calF$ a subalgebra
and $\Phi: \calE \to \BL(X)$ a non-degenerate representation.  Then:
\begin{aufzi}
\item {\em $\ancgen{\calE, \calF, \Phi}$ contains each $f\in \calF$
such that $\calZ(\calE) \cap [f]_\calE$ is an anchor set.}

\item  {\em If $\calF$ is commutative, then \quad
$ \ancgen{\calE,\calF,\Phi} 
= \{ f\in \calF \suchthat [f]_\calE \,\,\text{is an anchor set}\}$.}
\end{aufzi}
Indeed, a) follows from the inclusion
\[ \calZ(\calE) \cap [f]_\calE \subseteq [ef]_\calE 
\quad \text{for all $e\in \calE$},
\]
which is easy to establish. And b) follows from a).
This shows that our present approach generalizes the
one in \cite[Chapter 7]{HaaseLFC}. 
\end{rem}

\vanish{
This remark is relevant, e.g. for the situation when
$\calF$ consists of operator-valued functions defined on some subset
of $\C$ and $\calE$ is characterized by a pure growth condition. Then
$\calZ(\calE)$ contains scalar-valued functions which may suffice to
regularize an operator-valued function $f$ multiplicatively.}

Let us summarize the results of this section by
combining Theorems \ref{ext.t.ext} and \ref{ext.t.ancgen}.

\begin{cor}\label{ext.c.ancgen}
Let $\calF$ be a unital algebra, $\calE \subseteq \calF$ a subalgebra
and $\Phi: \calE \to \BL(X)$ a non-degenerate representation. Then 
there is a unique  extension $\fourier{\Phi}$ of $\Phi$ to
a calculus on $\ancgen{\calE, \calF, \Phi}$. Moreover,
$\calE$ is an algebraic core for $\fourier{\Phi}$.  
\end{cor}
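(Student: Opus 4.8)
The plan is to read the statement off as a direct combination of Theorems \ref{ext.t.ancgen} and \ref{ext.t.ext}, with one small extra observation for the algebraic-core claim. Write $\calF' := \ancgen{\calE, \calF, \Phi}$. Since $\Phi$ is non-degenerate, Theorem \ref{ext.t.ancgen} applies and tells us that $\calF'$ is a unital subalgebra of $\calF$ which contains $\calE$ and is anchored in $\calE$. This is precisely the constellation the Extension Theorem needs as input.

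Next I would invoke Theorem \ref{ext.t.ext} with $\calF'$ playing the role of the ambient unital algebra (in place of $\calF$) and with the given subalgebra $\calE \subseteq \calF'$. Its hypothesis — that $\calF'$ be anchored in $\calE$ — is exactly what the previous step supplied, and $\Phi: \calE \to \BL(X)$ is the given representation. Hence there is a unique calculus $\fourier{\Phi}: \calF' \to \Clo(X)$ with $\fourier{\Phi}\res{\calE} = \Phi$, which settles both existence and uniqueness of the extension on $\ancgen{\calE, \calF, \Phi}$.

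It then remains to verify that $\calE$ is an algebraic core for $\fourier{\Phi}$. Here $\calE$ is multiplicative (being a subalgebra), and $\calE \subseteq \bdd(\calF', \fourier{\Phi})$ because $\fourier{\Phi}$ restricts to the bounded representation $\Phi$ on $\calE$. Moreover $\bdd(\calF', \fourier{\Phi}) \subseteq \calF'$ is anchored in $\calE$, since by construction every element of $\calF'$ is. Theorem \ref{anc.t.main} then yields at once that $\calE$ is an algebraic core for $\fourier{\Phi}$.

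Alternatively, the algebraic-core property can be read straight off the defining formula \eqref{afc.eq.ext-def}: for $f \in \calF'$ and $e \in [f]_\calE$ one has $e, ef \in \calE$, so $[f]_\calE \subseteq \reg(f, \fourier{\Phi})$ and $\fourier{\Phi}$ agrees with $\Phi$ on these elements; consequently the condition that $[f]_\calE$ determine $\fourier{\Phi}(f)$ (i.e.\ strong determination by $\calE$) coincides verbatim with \eqref{afc.eq.ext-def}. I expect no genuine obstacle in any of this; the only points demanding a moment's care are this matching of the strong-determination formula with the construction, and the bookkeeping of which algebra plays the role of $\calF$ when the two earlier theorems are invoked.
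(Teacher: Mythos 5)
Your proposal is correct and follows exactly the paper's route: the corollary is stated there as a summary obtained "by combining Theorems \ref{ext.t.ext} and \ref{ext.t.ancgen}", which is precisely your first two paragraphs. Your explicit verification of the algebraic-core claim (either via Theorem \ref{anc.t.main} or by matching strong determination with the defining formula \eqref{afc.eq.ext-def}) is a detail the paper leaves implicit in the construction of $\fourier{\Phi}$, and both of your arguments for it are sound.
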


Corollary \ref{ext.c.ancgen} allows to extend any non-degenerate representation
$\Phi$ of a  subalgebra $\calE$ of a unital algebra $\calF$ to
the subalgebra $\ancgen{\calE,\calF,\Phi}$ of $\calE$-anchored elements. We shall call this the
{\emdf canonical extension} of $\Phi$ within $\calF$, and denote it again by $\Phi$
(instead of $\fourier{\Phi}$ as in the corollary).

\medskip

\subsection{Successive Extensions}\label{ext.s.succ}

Very often, one performs an algebraic extension in a situation,
when there is already some calculus present.
The following
situation is most common:

Let $\calF$ be a unital subalgebra of a unital algebra $\calG$,
and let $\calE \subseteq \calF$ be a subalgebra which is
an  algebraic core for a calculus $\Phi: \calF \to \Clo(X)$. 
Furthermore, let $\calE'$ be a subalgebra of $\calG$ and
$\Psi: \calE' \to \BL(X)$ a representation
with
\[ \calE \subseteq \calE', \qquad \Psi\res{\calE} = \Phi\res{\calE}.
\]
Then $\Psi$ is non-degenerate, and one can perform
an algebraic extension within $\calG$, yielding
\[ \calF':= \ancgen{\calE',\calG, \Psi}.
\]
We denote the extension again by $\Psi$.
The following picture illustrates the situation%
\footnote{Observe that since $\calE$ is an algebraic core
for $\Phi$, the calculus on $\calF$ can be considered
an algebraic extension of $\Phi\res{\calE}$. Hence
the title ``Successive Extensions''.}:
\[
\xymatrix{
&  \calG \\
\calF \ar@{-}[ur]&  \quad  \calF':= \ancgen{\calE',\calG, \Psi} \ar@{-}[u]\\
& \calE' \ar@{-}[u]\\
\calE \ar@{-}[ur]\ar@{-}[uu]&  
}
\]
For a function $f\in \calF$ one may ask, under which conditions
one has $f\in \calF'$ and $\Psi(f) = \Phi(f)$. 
The following result gives some answer.

\begin{thm}\label{ext.t.succ-comp}
In the situation described above, let $f\in \calF$.
Then
\[ f\in \calF' \quad \text{and}\quad\Phi(f) = \Psi(f)
\]
if any one of the following conditions is satisfied:
\begin{aufziii}
\item $f\in \calF'$ and $\Phi'(f)\in \BL(X)$.
\item $f\in \calF'$ and $\Phi'(f)$ is densely defined and
$\Phi(f) \in \BL(X)$.
\item For each $e'\in \calE'$ there is a $\Psi$-anchor
set $\calM_{e'}\subseteq \calE'$ such that 
$\calM_{e'} e'\subseteq \calD'\cdot [f]_\calE$, where
\[ \calD' := \{ d'\in \calF' \suchthat d'\cdot\calE \subseteq \calE'\}.
\]
\item $\calE'= \calE$.
\item $\cent(\calE') \cap [f]_\calE$ is an anchor set.
\item $\calE \subseteq \cnt(\calE')$.
\item $\calE'$ is commutative.
\end{aufziii}
\end{thm}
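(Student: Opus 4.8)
The plan is to isolate one general comparison principle, prove it, and then read off conditions 1)--7) as instances. Throughout I write $\Psi$ also for its canonical extension to $\calF'=\ancgen{\calE',\calG,\Psi}$, recalling from Corollary \ref{ext.c.ancgen} that $\calE'$ is an algebraic core for this extension, while $\calE$ is an algebraic core for $\Phi$ by hypothesis; in particular $[f]_\calE$ determines $\Phi(f)$ and $[h]_{\calE'}$ determines $\Psi(h)$ for every $h\in\calF'$. The backbone is the observation that, \emph{whenever $f\in\calF'$, one automatically has $\Psi(f)\subseteq\Phi(f)$}. Indeed, if $\Psi(f)x=y$ and $e\in[f]_\calE$, then $\Psi(e)$ is bounded and $ef\in\calE$, so Theorem \ref{afc.t.pro-cal}.b) yields $\Psi(ef)x=\Psi(e)\Psi(f)x=\Psi(e)y$; since $\Phi$ and $\Psi$ agree on $\calE$ this reads $\Phi(ef)x=\Phi(e)y$, and as $[f]_\calE$ determines $\Phi(f)$ we conclude $\Phi(f)x=y$.

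With this in hand, conditions 1) and 2) will be quick, since in both cases $f\in\calF'$ is assumed and hence $\Psi(f)\subseteq\Phi(f)$. In case 1), $\Psi(f)$ is bounded, hence everywhere defined, and an everywhere-defined operator admits no proper operator extension, so $\Phi(f)=\Psi(f)$. In case 2), $\Phi(f)$ is bounded while $\Psi(f)$ is closed and densely defined; approximating $x\in X$ by $x_n\in\dom\Psi(f)$ and using $\Psi(f)x_n=\Phi(f)x_n\to\Phi(f)x$ together with closedness of $\Psi(f)$ forces $x\in\dom\Psi(f)$ and $\Psi(f)x=\Phi(f)x$, giving equality again.

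The heart of the matter is condition 3), from which the rest follows. First I must show $f\in\calF'$, i.e.\ that $[e'f]_{\calE'}$ is a $\Psi$-anchor set for every $e'\in\calE'$. Given $m\in\calM_{e'}$, write $me'=dg$ with $d\in\calD'$ and $g\in[f]_\calE$; then $m(e'f)=d(gf)$, and since $gf\in\calE$ and $d\calE\subseteq\calE'$ we get $m(e'f)\in\calE'$, so $\calM_{e'}\subseteq[e'f]_{\calE'}$ and the latter is an anchor set because $\calM_{e'}$ is. For the equality it then remains, by the backbone, to prove $\Phi(f)\subseteq\Psi(f)$. Assuming $\Phi(f)x=y$ and fixing $e'\in[f]_{\calE'}$, I plan to compute, for each $m\in\calM_{e'}$ with $me'=dg$ as above,
\[
\Psi(m)\Psi(e'f)x=\Psi(d)\Psi(gf)x=\Psi(d)\Psi(g)y=\Psi(me')y=\Psi(m)\Psi(e')y,
\]
where the middle step uses $\Psi(gf)x=\Psi(g)y$ (the forward implication of $\Phi(f)x=y$ transported through $\Phi=\Psi$ on $\calE$), and each use of Theorem \ref{afc.t.pro-cal}.b) is legitimate because one factor ($\Psi(gf)$, resp.\ $\Psi(g)$) is bounded. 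Since $\calM_{e'}$ is a $\Psi$-anchor set this gives $\Psi(e'f)x=\Psi(e')y$; as $e'\in[f]_{\calE'}$ was arbitrary and $[f]_{\calE'}$ determines $\Psi(f)$, we get $\Psi(f)x=y$.

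Finally I reduce 4)--7) to 3). For 4) ($\calE'=\calE$), given $e\in\calE$ I take $\calM_e:=[ef]_\calE$, which determines $\Phi(ef)$ and is thus an anchor set; for $c\in[ef]_\calE$ one has $ce\in[f]_\calE$, so $\calM_e e\subseteq[f]_\calE\subseteq\calD'\cdot[f]_\calE$ (using $\car\in\calD'$). For 5), I take $\calM_{e'}:=\cnt(\calE')\cap[f]_\calE$ for every $e'$; for $z$ in this set, $ze'=e'z$ with $e'\in\calD'$ (as $e'\calE\subseteq\calE'$) and $z\in[f]_\calE$, so $\calM_{e'}e'\subseteq\calD'\cdot[f]_\calE$. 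Conditions 6) and 7) each force $[f]_\calE\subseteq\cnt(\calE')$, whence $\cnt(\calE')\cap[f]_\calE=[f]_\calE$, an anchor set since $\calE$ is an algebraic core for $\Phi$; thus 5) applies. The main obstacle is the bookkeeping in the displayed chain in case 3): one must route every product through a bounded factor so that Theorem \ref{afc.t.pro-cal}.b) is applicable and the possibly unbounded operator $\Psi(d)$ never needs to be controlled on its own — the set $\calD'$ is designed precisely so that $d(gf)$ lands back in $\calE'$, bridging the $\Phi$-world (where $g\in[f]_\calE$ and $\Phi=\Psi$) to the $\Psi$-world.
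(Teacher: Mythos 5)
Your proof is correct, and its skeleton coincides with the paper's: the same argument for membership $f\in\calF'$ in case 3) (namely $\calM_{e'}e'f\subseteq \calD'[f]_\calE f\subseteq \calD'\calE\subseteq\calE'$, hence $\calM_{e'}\subseteq[e'f]_{\calE'}$), the same reductions of 4)--7) to 3) resp.\ 5), and the same inclusion $\Psi(f)\subseteq\Phi(f)$ behind cases 1) and 2). The differences are local, and worth recording. For 1) and 2) the paper simply cites Lemma \ref{urc.l.uni} for the inclusion, which you re-prove inline (correctly). For the equality in case 3), the paper invokes Proposition \ref{anc.p.EM}, part 2), with $(\Phi,\calF)$ replaced by $(\Psi,\calF')$ and $\calM:=\calE'$, to conclude that $[f]_\calE$ determines $\Psi(f)$; since $[f]_\calE$ also (strongly) determines $\Phi(f)$ and $\Phi=\Psi$ on $\calE$, both operators are characterized by the identical family of equations $\Phi(ef)x=\Phi(e)y$, $e\in[f]_\calE$, so equality follows in one stroke. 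You instead split the equality into two inclusions and prove $\Phi(f)\subseteq\Psi(f)$ by a self-contained anchor-set computation; your displayed chain is, in effect, the content of Proposition \ref{anc.p.EM}.2) (i.e., of Lemma \ref{anc.l.pro-cal-deter}.c) and d)) unfolded in this concrete situation, with $\calD'$ playing the role of the set $\calB$ there. What the paper's route buys is brevity and reuse of the determination machinery; what yours buys is that Theorem \ref{ext.t.succ-comp} becomes verifiable without tracing through Section \ref{s.anc}, which is of some value given how terse the proof of Proposition \ref{anc.p.EM}.2) is. One citation quibble: in your backbone argument you justify $\Psi(ef)x=\Psi(e)\Psi(f)x$ by Theorem \ref{afc.t.pro-cal}.b), but there the bounded factor $\Psi(e)$ sits on the \emph{left}, whereas that theorem treats a bounded \emph{right} factor; the step you actually need is just the inclusion $\Psi(e)\Psi(f)\subseteq\Psi(ef)$ from Axiom (FC3), evaluated at $x\in\dom(\Psi(f))$, so nothing breaks. (In the displayed chain itself the uses of Theorem \ref{afc.t.pro-cal}.b) are legitimate, since there the bounded factors $\Psi(gf)$ and $\Psi(g)$ do stand on the right of $\Psi(d)$.)
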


\begin{proof}
1) and 2): If  $f \in  \calF \cap \calF'$  then $\Phi'(f)
\subseteq \Phi(f)$ by Lemma \ref{urc.l.uni}. Then 1) is sufficient
since $\Phi(f)$ is an operator, and 2) is sufficient since 
$\Phi'(f)$ is closed.

\prfnoi
3)\  We prove first that $f\in \calF'$. Let
$e'\in \calE'$. Take $\calM_e'$ as in the hypotheses. Then
\[ \calM_{e'}e'f \subseteq \calD' [f]_\calE f \subseteq
\calD'\calE \subseteq \calE'.
\]
It follows that $\calM_{e'} \subseteq [e'f]_{\calE'}$. Since $e'\in
\calE'$ was arbitrary, $f\in \calF'$. 
(Recall that 
$\calF' =\ancgen{\calE',\calG,\Psi}$ and cf. 
\eqref{ext.eq.ancgen-def}). 

\prfnoi
For the identity  $\Phi(f) = \Psi(f)$
it suffices to show that $[f]_\calE$ determines $\Psi(f)$.
But this follows directly from Proposition \ref{anc.p.EM},
part 2), with $(\Phi, \calF)$ replaced by $(\Phi', \calF')$
and $\calM := \calE'$.

\prfnoi
Let us now examine the cases 4)--7). 
In case 4), one has $\calE = \calE'$ and one can take
$\calM_{e'} = [e'f]_\calE$ for $e'\in \calE$ in 3).
In case 5) one can take 
$\calM_{e'} = \cnt(\calE') \cap [f]_\calE$ independently of $e'\in
\calE'$. Case 6) is an instance of case 5), since
$[f]_\calE$ is an anchor set by the assumption that
$\calE$ is an algebraic core for $\Phi$ on $\calF$. Finally,
case 7) obviously implies case 6). 
\vanish{

\prfnoi
Next, note that $\Psi(f) \subseteq \Phi(f)$ by 
Lemma \ref{urc.l.uni} (applied to $\calF \cap \calF'$). To prove the
converse inclusion, take $x,y\in X$ with $\Phi(f)x = y$ and 
$e'\in [f]_{\calE'}$. We need to show that $\Psi(e'f)x= \Psi(e')y$.

To this end, fix $c' \in \calM_{e'}$ as in the hypothesis. 
By assumption, there is $d'\in\calD'$ and $e\in [f]_\calE$ such that 
$c'e'= d'e$. Since $\Phi(f)x= y$ we obtain $\Phi(ef)x= \Phi(e)y$ and hence 
\begin{align*} \Psi(c')\Phi(e'f)x & = 
\Psi( c'e'f)x = \Psi(d'ef)x
= \Psi(d') \Psi(ef)x = 
\Psi(d') \Phi(ef)x 
\\ & = 
\Psi(d') \Phi(e)y = 
\Psi(d') \Psi(e)y
= \Psi(d'e)y = \Psi(c'e ')y 
\\ & = \Psi(c')\Psi(e')y.
\end{align*}
Since, by assumption, $\calM_{e'}$ is an anchor set for $\Psi$,
it follows that $\Psi(e'f)x = \Psi(e')y$ as desired.
} 
\end{proof}

\vanish{

\medskip

\subsection*{Admissible Subalgebras}

Is the canonical extension necessarily consistent  with an already
given calculus? In general, the answer might be ``no'' (although we do
not
know of an explicit counterexample). 
The following lemma is the best we can achieve at present.
 
\begin{lem}\label{ext.l.admis}
Let $\calF$ be a unital algebra, $\Phi: \calF \to \Clo(X)$ a proto-calculus and $\calE\subseteq \bdd(\calF;\Phi)$  a subalgebra on which   $\Phi$ is non-degenerate. Then one can restrict $\Phi$ to $\calE$ and consider
its canonical extension $\fourier{\Phi\res{\calE}}$ to $\ancgen{\calE,
  \calF,\Phi} \subseteq \calF$. The following assertions
are equivalent:
\begin{aufzii}
\item 
 $\fourier{\Phi\res{\calE}} = \Phi\res{\ancgen{\calE,\calF,\Phi}}$.

\item $\ancgen{\calE, \calF, \Phi}$ is
$\Phi$-regular, i.e., 
the restriction of $\Phi$ to $\ancgen{\calE, \calF,\Phi}$ is a
  calculus.
\item The set $\reg(f,\Phi) \cap \ancgen{\calE,\calF,\Phi}$ is
  $\Phi$-determining for $f$, for  each $f\in \ancgen{\calE, \calF,\Phi}$.
\end{aufzii}
\end{lem}

\begin{proof}
The implication (i)$\dann$(ii)  holds since by Theorem \ref{ext.t.ext}
the canonical extension is a calculus. The converse follows from the
uniqueness  part of that theorem. And the
equivalence (ii)$\gdw$(iii) follows immediately from the definition of
a calculus, since $\reg(f;\Phi) \cap \ancgen{\calE,\calF,\Phi} =
\reg(f;\Phi\res{\ancgen{\calE, \calF,\Phi}})$.
\end{proof}

In the situation of Lemma \ref{ext.l.admis}, if $\calE$ 
is such that (i)--(iii) hold, then 
we call $\calE$ a {\emdf $\Phi$-admissible} subalgebra
of $\calF$.

In general, it may be difficult to identify admissible
subalgebras. However, if $\calF$ is commutative and $\Phi$ is a calculus (and not just a
proto-calculus), then the situation is simple:

\begin{cor}\label{ext.c.admis-comm}
Let $\calF$ be a commutative unital algebra, $\Phi: \calF \to
\Clo(X)$ an $\calF$-calculus on a Banach space $X$ and $\calE \subseteq \bdd(\calF,\Phi)$ a subalgebra
on which $\Phi$ is non-degenerate. Then $\calE$ is admissible.
\end{cor}

\begin{proof}
By construction, the algebra $\calF':= \ancgen{\calE, \calF, \Phi}$ is
anchored in $\calE$ and $\calE \subseteq \bdd(\calF,\Phi) \cap
\calF'$. Hence, Theorem \ref{urc.t.pull-back-com} yields that $\calF'$ is
$\Phi$-regular.  
\end{proof}

\begin{rem}
At present it is unkown whether
Corollary \ref{ext.c.admis-comm} holds without the assumption of
commutativity. Related to this is the question 
what happens if one performs an
extension, say from $\calE$ to $\calF':= \ancgen{\calE, \calF, \Phi}$, then
takes a subalgebra $\calE'$ of $\Phi$-bounded elements of 
$\calF'$ and performs another  extension, now starting with $\calE'$. If $\calF$
is commutative, then nothing strange can happen, and no ``new''
functions are included in the domain of the functional
calculus. (This is an easy exercise.) 
\end{rem}

}%

\section{Approximate Identities}\label{s.api}

Let $\calF$ be a {\em commutative} unital algebra, 
$\Phi: \calF \to \Clo(X)$ a proto-calculus, and 
 $\calE \subseteq \bdd(\calF,\Phi)$ a  subset of
 $\Phi$-bounded elements.  A sequence $(e_n)_n$  in $\calE$ is called
 a  {\emdf (weak) approximate identity} in $\calE$
(with respect to $\Phi$), if $\Phi(e_n) \to \Id$ strongly (weakly)
as $n \to \infty$.

\medskip

Let $f\in \calF$.
A (weak) approximate identity $(e_n)_n$ is said to be
a (weak) approximate identity {\emdf for $f$}, if 
\[  \Phi(e_n)\Phi(f)  \subseteq \Phi(f_n)\Phi(e_n) 
= \Phi(f_ne_n) \in \BL(X)
\quad \text{for all $n \in \N$}.
\]
More generally, $(e_n)_n$ is said to be
a {\emdf common} 
(weak) approximate identity {\em for} all the elements
of subset $\calM \subseteq \calF$, if $(e_n)_n$ 
is a (weak) approximate identity for each $f\in \calM$.

Finally,  we say that 
$f\in \calF$ {\emdf admits} a (weak) approximate identity
in $\calE$ if there  is a (weak) approximate identity
for $f$ in $\calE$. 
More generally, we say that the elements of a subset $\calM
\subseteq \calF$ {\emdf admit} a common (weak) approximate identity in
$\calE$, if there is a common
(weak) approximate identity in $\calE$ for them. 

\medskip
 
Note that by the uniform boundedness principle, a weak approximate identity $(e_n)_n$ is uniformly $\Phi$-bounded, i.e., satisfies
$\sup_{n\in \N} \norm{\Phi(e_n)} < \infty$.

\begin{lem}\label{api.l.api}
Let $(e_n)_n$ be a weak approximate identity for $f\in \calF$
with respect to $\Phi$ and let 
\[ D := \spann\bigcup_{n\in \N} \ran(\Phi(e_n)).
\]
Then the following assertions hold:
\begin{aufzi}
\item $\{e_n \suchthat n \in \N\}$ is an anchor set.
\item $D$ is dense in $X$  and $D \subseteq \dom(\Phi(f))$.
In particular, $\Phi(f)$ is densely defined. 
\item $\dom(\Phi(f)) \cap D$ is a core for $\Phi(f)$. 
If $(e_n)_n$ is an aproximate identity for $f$ then
$\Phi(e_n)x \to x$ within the Banach space $\dom(\Phi(f))$
for each $x\in \dom(\Phi(f_n))$.
\item For all $n\in \N$
\[  \cls{\Phi(e_n)\Phi(f)} = \Phi(e_nf) = \Phi(fe_n)
= \Phi(f)\Phi(e_n).
\] 
\end{aufzi}
\end{lem}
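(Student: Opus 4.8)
The plan is to distill from the defining property of an approximate identity for $f$ the single structural fact that each $e_n$ lies in $\reg(f,\Phi)$, and then to read off (a)--(d) from Axiom (FC3) together with the weak (resp.\ strong) convergence $\Phi(e_n)\to\Id$. Concretely, since $e_n\in\calE\subseteq\bdd(\calF,\Phi)$ and the defining property gives $\Phi(e_nf)\in\BL(X)$, i.e.\ $e_nf\in\bdd(\calF,\Phi)$, Axiom (FC3) and commutativity yield
\[ \Phi(e_n)\Phi(f)\subseteq\Phi(e_nf)=\Phi(fe_n)\supseteq\Phi(f)\Phi(e_n), \]
with $\Phi(e_nf)\in\BL(X)$, while the domain identity in (FC3) gives $\dom(\Phi(f)\Phi(e_n))=\dom(\Phi(e_n))\cap\dom(\Phi(fe_n))=X$ and $\dom(\Phi(e_n)\Phi(f))=\dom(\Phi(f))$. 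These observations are the engine for everything below: the first forces $\ran(\Phi(e_n))\subseteq\dom(\Phi(f))$, and comparing domains shows $\Phi(f)\Phi(e_n)=\Phi(fe_n)$ as operators on all of $X$.

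For (a) I would argue that if $x\in\bigcap_n\ker(\Phi(e_n))$ then $\Phi(e_n)x=0$ for every $n$, whereas $\Phi(e_n)x\to x$ weakly, so $x=0$. For the density in (b) I would test $D$ against a functional $\phi\in X^*$ vanishing on it: then $\langle\Phi(e_n)x,\phi\rangle=0$ for all $n$ and $x$, and weak convergence forces $\langle x,\phi\rangle=0$ for every $x$, so $\phi=0$ and $D$ is dense by Hahn--Banach. The inclusion $D\subseteq\dom(\Phi(f))$ is precisely the statement $\ran(\Phi(e_n))\subseteq\dom(\Phi(f))$ obtained above, and density of $D$ then shows $\Phi(f)$ is densely defined. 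For (d), the equalities $\Phi(e_nf)=\Phi(fe_n)=\Phi(f)\Phi(e_n)$ are the commutativity relation together with the domain comparison of the display; the remaining claim $\cls{\Phi(e_n)\Phi(f)}=\Phi(e_nf)$ follows because $\Phi(e_n)\Phi(f)$ is the restriction of the bounded operator $\Phi(e_nf)$ to the dense subspace $\dom(\Phi(f))$, whose graph closure is the continuous extension, namely $\Phi(e_nf)$ itself.

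The assertion that will require the most care is the core statement in (c). Here I would work in the Banach space $\dom(\Phi(f))$ under the graph norm and use the approximants $\Phi(e_n)x\in D$ for $x\in\dom(\Phi(f))$. For such $x$ one computes, using $x\in\dom(\Phi(e_n)\Phi(f))=\dom(\Phi(f))$,
\[ \Phi(f)\Phi(e_n)x=\Phi(fe_n)x=\Phi(e_nf)x=\Phi(e_n)\Phi(f)x=\Phi(e_n)y,\qquad y:=\Phi(f)x, \]
so that both $\Phi(e_n)x\to x$ and $\Phi(f)\Phi(e_n)x=\Phi(e_n)y\to y=\Phi(f)x$ hold weakly in $X$. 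Passing through the isometric embedding $z\mapsto(z,\Phi(f)z)$ of $\dom(\Phi(f))$ onto the graph of $\Phi(f)$, a \emph{closed} (hence weakly closed) subspace of $X\oplus X$, this coordinatewise weak convergence is exactly weak convergence $\Phi(e_n)x\to x$ in $\dom(\Phi(f))$. Since $D$ is a linear subspace, its weak and norm closures in $\dom(\Phi(f))$ coincide (Mazur), whence $\dom(\Phi(f))\cap D$ is a core. When $(e_n)_n$ is a genuine (strong) approximate identity the same computation upgrades the two weak limits to norm limits in $X$, giving $\Phi(e_n)x\to x$ in the graph norm directly. The one subtlety is precisely this passage from coordinatewise weak convergence to weak convergence in the graph norm, which is where the closedness of $\Phi(f)$ must be invoked.
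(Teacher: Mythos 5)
Your proposal is correct and follows essentially the same route as the paper: the operator identities in (d) come from Axiom (FC3) plus the boundedness of $\Phi(e_nf)$, the density statements come from weak convergence $\Phi(e_n)\to\Id$ together with a weak-to-norm closure argument (your Hahn--Banach test is the same mechanism as the paper's appeal to Mazur), and the core property in (c) is obtained exactly as in the paper by showing that the pairs $(\Phi(e_n)x,\Phi(e_n)\Phi(f)x)$ lie in the graph of $\Phi(f)$ restricted to $D$ and converge weakly to $(x,\Phi(f)x)$, then invoking Mazur. Your reformulation via the graph-norm Banach space $\dom(\Phi(f))$ is just an isometric rephrasing of the paper's argument inside $X\oplus X$, so there is nothing substantively different to flag.
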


\begin{proof}
a) is trivial and b) follows from Mazur's theorem, as
$D$ is clearly weakly dense in $X$. 

\prfnoi
c)\ Let $x,y\in X$ with $\Phi(f)x = y$. Then by hypothesis, 
for each $n \in \N$ we have $(\Phi(e_n)x, \Phi(e_n)y) \in \Phi(f)$, so
$\Phi(e_n)x\in D \cap \dom(\Phi(f))$. 
Since $(\Phi(e_n)x, \Phi(e_n)y) \to (x,y)$ weakly, the space
$\Phi(f)\res{D}$---considered as a
subspace of $X\oplus X$---is weakly dense in  $\Phi(f)$. By Mazur's theorem again, this space is
strongly dense, hence $D$ is a core for $\Phi(f)$.
If $(e_n)_n$ is even a strong approximate identity, then
$\Phi(e_n)x \to x$ and $\Phi(f)\Phi(e_n)x = \Phi(e_n)y 
\to y$ strongly.

\prfnoi
d) Suppose that $\Phi(fe_n) \in \BL(X)$ for all $n\in \N$. 
Then  $\Phi(f) \Phi(e_n) = \Phi(e_nf) \in \BL(X)$, and hence
$\ran(\Phi(e_n)) \subseteq \dom(\Phi(f))$. If follows that 
$D\subseteq \dom(\Phi(f))$, and $\Phi(f)$ is densely defined,
by b). By hypothesis,
\[ \Phi(e_n)\Phi(f) \subseteq \Phi(f) \Phi(e_n) = \Phi(fe_n).
\]
Since the left-most operator is densely defined, 
we obtain
\[ \cls{\Phi(e_n)\Phi(f)} = \Phi(fe_n).
\]
On the other hand, by (FC3), 
\[ \Phi(e_n)\Phi(f) \subseteq \Phi(e_nf)
\]
and the latter is a closed operator. It follows that 
$\Phi(e_nf) = \Phi(fe_n)$ as claimed. 
\end{proof}

By virtue of the preceding lemma, we obtain the following
result.

\begin{thm}\label{api.t.api}
Let $\Phi: \calF \to \BL(X)$ be a proto-calculus. 
\begin{aufzi}
\item
If  $(e_n)_n$ is a (weak) approximate identity
for $f,g\in \calF$, then it is a (weak) approximate identity for
$f+g$ and $\lambda f$ ($\lambda \in \C$) and one has
\[ \cls{\Phi(f) + \Phi(g)} = \Phi(f + g).
\]
\item 
If  $(e_n)_n$ is a strong approximate identity
for $f,g\in \calF$, then $(e_n^2)_n$ is a strong 
approximate identity for $fg$, and one has
\[ \cls{\Phi(f) \Phi(g)} = \Phi(fg).
\]
\end{aufzi}
\end{thm}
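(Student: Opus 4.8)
The plan is to reduce both identities to the core statement of Lemma \ref{api.l.api}.c), which already holds for \emph{weak} approximate identities. In each part the strategy is fourfold: first check that the relevant combination again admits the given sequence (resp.\ its square) as an approximate identity; then invoke Lemma \ref{api.l.api} so that $D = \spann\bigcup_n \ran\Phi(e_n)$ (resp.\ $D' = \spann\bigcup_n \ran\Phi(e_n^2)$) is a core for the closed operator on the right-hand side; then verify that on that core the two operators coincide; finally pass to closures, using that the right-hand operator is closed and contains the left-hand one by (FC2)/(FC3).

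For part a) I would first dispose of $\lambda f$: for $\lambda\neq 0$ one has $\Phi(\lambda f)=\lambda\Phi(f)$ by Theorem \ref{afc.t.pro-cal}.a), so the defining inclusion transports verbatim, while $\lambda=0$ is trivial. For $f+g$, commutativity and (FC3) give $\Phi(e_n)\Phi(f+g)\subseteq\Phi(e_n(f+g))=\Phi((f+g)e_n)$, and Theorem \ref{afc.t.pro-cal}.b) yields $\Phi((f+g)e_n)=\Phi(fe_n)+\Phi(ge_n)\in\BL(X)$ since both summands are bounded ($(e_n)_n$ being an approximate identity for $f$ and for $g$); hence $(e_n)_n$ is an approximate identity for $f+g$. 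By Lemma \ref{api.l.api}.d) each $\ran\Phi(e_m)\subseteq\dom\Phi(f)\cap\dom\Phi(g)=\dom(\Phi(f)+\Phi(g))\subseteq\dom\Phi(f+g)$, so $D\subseteq\dom\Phi(f+g)$ and $D$ is a core for $\Phi(f+g)$ by Lemma \ref{api.l.api}.c); moreover on $D$ axiom (FC2) forces $\Phi(f+g)w=\Phi(f)w+\Phi(g)w$. Thus $\Phi(f+g)\res{D}=(\Phi(f)+\Phi(g))\res{D}$, and taking closures together with $\Phi(f)+\Phi(g)\subseteq\Phi(f+g)$ (FC2) and closedness of $\Phi(f+g)$ gives the claimed equality.

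For part b) the first and most delicate point is that $(e_n^2)_n$ is again an approximate identity. Here the \emph{strong} hypothesis is essential, because products do not respect weak limits: from $\Phi(e_n)\to\Id$ strongly together with uniform $\Phi$-boundedness (the Uniform Boundedness Principle, recorded before the lemma) one obtains $\Phi(e_n^2)=\Phi(e_n)^2\to\Id$ strongly. It is an approximate identity for $fg$ because, by commutativity, $fge_n^2=(fe_n)(ge_n)$, so $\Phi(fge_n^2)=\Phi(fe_n)\Phi(ge_n)\in\BL(X)$, while (FC3) gives $\Phi(e_n^2)\Phi(fg)\subseteq\Phi(fge_n^2)$. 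The structural role of the \emph{square} is to land each range $\ran\Phi(e_n^2)$ inside the iterated domain $\dom(\Phi(f)\Phi(g))$: writing $ge_n^2=(ge_n)e_n=e_n(ge_n)$ one gets $\Phi(g)\Phi(e_n^2)=\Phi(ge_n^2)=\Phi(e_n)\Phi(ge_n)$, whose range lies in $\ran\Phi(e_n)\subseteq\dom\Phi(f)$; hence $\ran\Phi(e_n^2)\subseteq\dom(\Phi(f)\Phi(g))$, and there $\Phi(f)\Phi(g)=\Phi(fg)$ by (FC3). With $D'$ a core for $\Phi(fg)$ (Lemma \ref{api.l.api}.c) applied to $fg$) on which $\Phi(fg)\res{D'}=(\Phi(f)\Phi(g))\res{D'}$, the closure argument concludes exactly as in part a).

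The main obstacle is precisely this passage to $(e_n^2)$ in part b): one must justify that it remains an approximate identity—which is where strong convergence cannot be dispensed with—and then exploit the factorization $e_n^2=e_ne_n$ together with commutativity to place the ranges into the iterated domain $\dom(\Phi(f)\Phi(g))$, something a single factor $e_n$ would not achieve. The remainder is routine bookkeeping with (FC2), (FC3), and Theorem \ref{afc.t.pro-cal}.
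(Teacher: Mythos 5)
Your proof is correct and takes essentially the same route as the paper: verify that $(e_n)_n$ (resp.\ $(e_n^2)_n$, using uniform boundedness and strong convergence) is again an approximate identity for the combined element, invoke Lemma \ref{api.l.api} to get the core $D$ (resp.\ $D'$) on which the two operators agree via (FC2)/(FC3), and conclude from the closedness of $\Phi(f+g)$ (resp.\ $\Phi(fg)$). You are in fact somewhat more explicit than the paper, which omits the $\lambda f$ case and dismisses the final identity in b) with ``the second follows easily,'' but the underlying argument is identical.
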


\begin{proof}
a)\ Since $\Phi(e_nf) = \Phi(fe_n)$ and $\Phi(e_ng) = \Phi(ge_n)$
are bounded, so is 
\[ \Phi(e_n(f+g)) = \Phi(e_n f + e_n g) = \Phi(e_nf) + \Phi(e_n g)
= \dots = \Phi((f+g)e_n).
\]
It follows that $(e_n)_n$ is a (weak) approximation of identity for
$f+g$ and, hence, that $D$ is a core for $\Phi(f+g)$. But 
$D \subseteq \dom(\Phi(f)) \cap \dom(\Phi(g))$, and so we are done.

\prfnoi
b) Since $(e_n)_n$ is an approximate identity, it is bounded, and
hence also $(e_n^2)_n$ is an approximate identity.  Note that
\begin{align*}
 \Phi(fge_n^2) & = \Phi(f(ge_n)e_n)
= \Phi(f) \Phi(ge_n) \Phi(e_n)
= \Phi(f) \Phi(e_ng) \Phi(e_n)
\\ & = \Phi(fe_nge_n) = \Phi(fe_n) \Phi(ge_n) \in \BL(X),
\end{align*}
and continuing the computation yields
$\Phi(fge_n^2) = \Phi(e_n^2fg)$.  This proves the first claim. 
The second follows easily.
\end{proof}

\vanish{
\begin{exa}
If $\Phi: \Meas(K,\Sigma) \to \BL(H)$ is any measurable functional
calculus then each $f\in \Meas(K,\Sigma)$ admits an approximate
identity in $\calE = \BMb(K,\Sigma)$,  for instance $e_n := \frac{n}{n
  + \abs{f}}$ or $e_n := \car_{\set{\abs{f}\le n}}$,  $n\in \N$. 
\end{exa}

\begin{exa}
Suppose that $-A$ generates a bounded $C_0$-semigroup on a Banach
space $X$ and  $f\in \Mer(\C_+)$ is such that $f(A)$ is defined 
in the extended Hille--Phillips calculus  for $A$. If
\[ D_\infty := \bigcap_{n\ge 0} \dom(A^n)
\]
is contained in $\dom(f(A))$ then $f$ admits an approximate identity. 
In particular, $D_\infty$ is a core for $f(A)$ (Exercise \ref{mafc.ex.D-infty}).
\end{exa}

}

\vanish{

\section{The Generator of a Functional Calculus}

Recall from Chapter \ref{c.afc} that an operator $A$ is called
the generator of an $\calF$-calculus $\Phi$ if $\calF$ is a space 
of functions on a set $D\subseteq \C$, the function $\bfz \in \calF$ and
$\Phi(\bfz) = A$. Later on, we extended this terminology towards the
situation when $\Phi(\bfz)$ is not, but $\Phi((\lambda -
\bfz)^{-1})$ is well defined for some $\lambda \in \C$.

By virtue of the canonical extension, one can unify such auxiliary 
definitions of a generator, in the following way. 
Suppose that $\calF$ is an algebra of functions on a set $D\subseteq \C$
and $\Phi: \calE \to \BL(X)$ is a non-degenerate representation, where
$\calE$ is a subalgebra of $\calF$. Then we call the operator $A$ the
{\emdf generator} of the calculus $\Phi$ if $\bfz$ is anchored in
$\calE$ with respect to $\Phi$ and  $\fourier{\Phi}(\bfz) = A$. That is, $A$ is the generator
of the canonical extension of $\Phi$. 

In applications, the following situation is typical: there is 
an element $g\in  \calE$ and some $\lambda \in \C\ohne D$
such that $e := (\lambda - \bfz)^{-1}g \in \calE$ is an anchor
element.
Since \[ e\cdot \bfz   = -g +  \lambda e \in \calE,
\]
$e$ is an anchor element for $\bfz$. 
Hence, in such a situation, $\fourier{\Phi}(\bfz) = \Phi(e)^{-1} \Phi(e
\cdot \bfz)$ is the generator of $\Phi$.

In particular, the above happens when $(\lambda - \bfz)^{-1} \in
\calE$ and $\Phi( (\lambda - \bfz)^{-1})$ is injective, since one can 
then take $e= (\lambda - \bfz)^{-1}$.

\begin{cor}\label{mafc.c.generator}
Let $\calF$ be an algebra of functions on $D\subseteq \C$, let $\calE
\subseteq \calF$ be a subalgebra and let $\Phi: \calE \to \BL(X)$ be a
representation. Suppose that there is an operator $A$ on $X$,
a number $\lambda \in \resolv(A) \ohne D$ and $g\in \calE$
such that $\Phi(g)$ is injective, $g\cdot (\lambda - \bfz)^{-1} \in
\calE$  and 
\[ \Phi(g (\lambda - \bfz)^{-1}) = \Phi(g) R(\lambda,A).
\]
Then $A$ is the generator of $\Phi$. 
\end{cor}

\begin{proof}
By what we have seen above, with $e := g \cdot (\lambda - \bfz)^{-1}$
we have 
$ e \cdot \bfz = - g + \lambda e \in
\calE$ and $e$ is an anchor element for $\bfz$. It follows that
\[ \Phi(e \bfz) = \Phi(- g + \lambda e) = -\Phi(g) + \lambda
\Phi(g)R(\lambda,A)= \Phi(g) [ -\Id + \lambda R(\lambda,A)].
\]
Hence, 
\begin{align*}
\fourier{\Phi}(\bfz) & = \Phi(e)^{-1} \Phi(e \bfz) = 
 (\lambda - A) \Phi(g)^{-1}
\Phi(g)[-\Id + \lambda R(\lambda,A)] 
\\ & = 
(\lambda - A) [-\Id + \lambda R(\lambda,A)] = A
\end{align*}
as claimed.\qed
\end{proof}

\bigskip
\noindent
The Extension Theorem will unfold its true power only in coming
chapters. However, we can already review the calculi known so far in
the light of Theorem \ref{mafc.t.ext}. 

}

\section{The Dual Calculus}\label{s.dua}

For a calculus   $(\calF, \Phi)$ 
on  a Banach space $X$ one is tempted
to define a ``dual
calculus'' on $X'$ by letting $\Phi'(f) := \Phi(f)'$.  This is
premature in at least two respects. First, if $\Phi(f)$ is not
densely defined, $\Phi(f)'$ is just a linear relation and not an operator.  Secondly, 
even if the first problem is ruled out by appropriate minimal 
assumptions,
it is not clear how to establish the formal properties of a
calculus for $\Phi'$. 

\medskip

To tackle these problems, we shall take a different route and define the dual calculus by virtue of the extension
procedure described in Section \ref{s.ext}. 
To wit, let
$\Phi:\calF \to \Clo(X)$ be any proto-calculus and let  $\calB :=
\bdd(\calF, \Phi)$ the set of $\Phi$-bounded elements. For
$b \in \calB$ we define
\[  \Phi'(b) := \Phi(b)' \in \BL(X').
\]
As $\calF$ may not be commutative, the mapping
$\Phi'$ may not be a homomorphism for the original algebra structure. 
To remedy this defect, we pass to the {\emdf opposite algebra}
$\calF^\op$, defined on the same set $\calF$ 
with the same linear structure but
with the ``opposite''  multiplication 
\[ f \cdot_\op g = gf\qquad (g,f\in \calF).
\]
For any subset $\calM \subseteq \calF$ we write $\calM^\op$ when we
want to consider $\calM$ as endowed with this new multiplication. 
This applies in particular to $\calB$, whence we obtain
\[ [f]_{\calB^\op} = \{ e\in \calB \suchthat e\cdot_\op f \in \calB\}
= \{ e\in \calB \suchthat fe \in \calB\}
\]
for $f\in \calF^\op$.   The mapping
\[ \Phi': \calB^\op \to \BL(X')
\]
is a unital algebra homomorphism.  We then can pass to
its canonical extension to the algebra
\[ \calF' := \ancgen{\calB^\op, \calF^\op, \Phi'};
\]
as usual, we shall denote that extension by $\Phi'$ again.  The  mapping
\[ \Phi': \calF' \to \BL(X')
\]
is called the {\emdf dual calculus} associated with  $\Phi$. 
By construction, it is a calculus (Theorem \ref{ext.t.ext}).

\begin{thm}\label{dua.t.main}
Let $(\calF, \Phi)$ be a proto-calculus  with dual calculus
$(\calF', \Phi')$, and let $f\in \calF'$. Define
\[ D_f := \spann\{ \Phi(e)x \suchthat x\in X, \,\,e\in
[f]_{\calB^\op}\}.
\]
Then the following assertions hold: 
\begin{aufzi}
\item $D$ is dense in $X$ and $D \subseteq \dom(\Phi(f))$. 
In particular, $\Phi(f)$ is densely defined.
\item $\Phi(f)'\subseteq \Phi'(f)$, with equality if and only if
$D_f$ is a core for $\Phi(f)$. 
\item $\Phi(f)$ is bounded if and only if $\Phi'(f)$ is bounded;  in this case $\Phi'(f)= \Phi(f)'$.
\end{aufzi} 
\end{thm}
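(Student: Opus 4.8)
The plan is to reduce all three parts to a single identification, namely that $\Phi'(f)$ is nothing but the adjoint of the restriction $\Phi(f)\res{D_f}$; once this is in place, (a)--(c) follow from elementary operator duality together with the Hahn--Banach theorem.

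First I would dispose of (a). Since $\calF' = \ancgen{\calB^{\op},\calF^{\op},\Phi'}$ is anchored in $\calB^{\op}$ by Theorem \ref{ext.t.ancgen}, the set $[f]_{\calB^{\op}}$ is a $\Phi'$-anchor set. For $e\in\calB$ one has $\Phi'(e)=\Phi(e)'$, hence $\ker(\Phi'(e)) = \ran(\Phi(e))^{\perp}$, and the anchor condition $\bigcap_{e}\ker(\Phi'(e)) = \{0\}$ reads $D_f^{\perp}=\{0\}$, which by Hahn--Banach means $D_f$ is dense. For the inclusion $D_f\subseteq\dom(\Phi(f))$, note that for $e\in[f]_{\calB^{\op}}$ both $e$ and $fe$ lie in $\calB$, so $\Phi(e),\Phi(fe)\in\BL(X)$; Axiom (FC3) then gives $\dom(\Phi(f)\Phi(e)) = \dom(\Phi(e))\cap\dom(\Phi(fe)) = X$ and $\Phi(f)\Phi(e)u = \Phi(fe)u$ for all $u\in X$, so $\ran(\Phi(e))\subseteq\dom(\Phi(f))$.

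The central step is to unwind the extension formula defining $\Phi'$ on $\calF^{\op}$. By construction $\Phi'(f)\phi=\psi$ means $\Phi(fe)'\phi = \Phi(e)'\psi$ for every $e\in[f]_{\calB^{\op}}$ (recall $e\cdot_{\op}f = fe$). Pairing with $u\in X$ and using $\Phi(fe)u=\Phi(f)\Phi(e)u$ from (a), this is precisely $\langle\psi,\Phi(e)u\rangle = \langle\phi,\Phi(f)\Phi(e)u\rangle$, i.e. $\langle\psi,x\rangle = \langle\phi,\Phi(f)x\rangle$ for all $x\in D_f$, which is the defining relation of $(\Phi(f)\res{D_f})'\phi=\psi$. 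Thus $\Phi'(f) = (\Phi(f)\res{D_f})' = (\cls{\Phi(f)\res{D_f}})'$, adjoints being insensitive to closure. Since $\Phi(f)\res{D_f}\subseteq\Phi(f)$ and adjunction reverses inclusions, $\Phi(f)'\subseteq\Phi'(f)$; and if $D_f$ is a core then $\cls{\Phi(f)\res{D_f}}=\Phi(f)$, giving equality.

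For the converse in (b) I would argue on graphs, which I expect to be the main obstacle: on a non-reflexive $X$ one cannot invoke $A''=A$, and the correct substitute is Hahn--Banach separation. Write $A:=\Phi(f)$ and $B:=\cls{\Phi(f)\res{D_f}}\subseteq A$. Equality $\Phi(f)'=\Phi'(f)$ says $A'=B'$, hence $G(A)^{\perp}=G(B)^{\perp}$ in $X'\oplus X'$ under the identification $(\phi,\psi)\in G(T')\Leftrightarrow(\psi,-\phi)\in G(T)^{\perp}$. For closed subspaces $G(B)\subseteq G(A)$, equality of annihilators forces $G(A)=G(B)$, so $A=B$ and $D_f$ is a core. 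Finally for (c): if $\Phi(f)$ is bounded then, being closed and densely defined, it lies in $\BL(X)$, whence the dense subspace $D_f$ is a core and (b) gives $\Phi'(f)=\Phi(f)'\in\BL(X')$. Conversely, if $\Phi'(f)=B'\in\BL(X')$, then $\|Bx\| = \sup_{\|\phi\|\le1}|\langle B'\phi,x\rangle|\le\|B'\|\,\|x\|$ shows $B$ is bounded, so $B\in\BL(X)$; then $B\subseteq\Phi(f)$ with $\dom(B)=X$ forces $\Phi(f)=B$ bounded, and $D_f$ is again a core, so $\Phi'(f)=\Phi(f)'$.
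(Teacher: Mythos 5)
Your proof is correct. For parts a) and b) it is essentially the paper's own argument in a different packaging: you unwind the extension formula into the statement that $\Phi'(f)$ is precisely the adjoint of the restriction $\Phi(f)\res{D_f}$, where the paper expresses the same fact through graph annihilators ($\Phi'(f)x'=y'$ iff $(x',-y')\perp(\Phi(f)\cap(D_f\oplus X))$), and both then obtain the inclusion and the core criterion by Hahn--Banach separation of graphs.

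The genuine divergence is in the converse half of c). At that point the paper uses from b) only the inclusion $\Phi(f)'\subseteq\Phi'(f)$, and therefore must invoke the weak$^*$ closed graph theorem (Theorem \ref{sal.t.cgt} of the appendix) to realize the bounded, weak$^*$-closed operator $\Phi'(f)$ as $T'$ for some $T\in\BL(X)$, followed by the bidual computation $T=T''\cap(X\oplus X)\subseteq\Phi(f)''\cap(X\oplus X)=\Phi(f)$. You bypass this entirely: since you already identified $\Phi'(f)=B'$ with $B:=\cls{\Phi(f)\res{D_f}}$ a concrete closed, densely defined operator on $X$ satisfying $B\subseteq\Phi(f)$, boundedness of $B'$ on all of $X'$ transfers to $B$ via the elementary estimate $\norm{Bx}=\sup_{\norm{\phi}\le1}\abs{\dprod{B'\phi}{x}}\le\norm{B'}\,\norm{x}$, and then $B\in\BL(X)$ together with $B\subseteq\Phi(f)$ and single-valuedness of $\Phi(f)$ forces $\Phi(f)=B$. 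This is a real simplification: it makes the appendix theorem (and its nontrivial pedigree in the theory of topological vector spaces) unnecessary for this particular result. What the paper's route buys is only robustness --- it would work knowing merely the inclusion $\Phi(f)'\subseteq\Phi'(f)$ rather than the exact identification of $\Phi'(f)$ as an adjoint; but since part b) delivers that identification anyway, your argument is shorter and more self-contained.
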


\begin{proof}
a) Let $e \in [f]_{\calB^\op}$. Then $e, fe\in \calB$. Hence,
$\ran(\Phi(e)) \subseteq \dom(\Phi(f))$. This yields the inclusion
$D \subseteq \dom(\Phi(f))$.  Since by construction,
$[f]_{\calB^\op}$ is a $\Phi'$-anchor, one has
\beq 
\bigcap_{e\in [f]_{\calB^\op}} \ker(\Phi(e)') = \{0\}.  
\eeq
By a standard application of the Hahn--Banach theorem, $D$ is dense in $X$.

\vanish{
\prfnoi
b) Let $e \in [f]_{\calB^\op}$ as before.
Then, 
\[ 
\Phi'(e\cdot_\op f) = \Phi(fe)'= (\Phi(f)
\Phi(e))' \supseteq  \Phi(e)'\Phi(f)'= \Phi'(e)\Phi(f)'. 
\]
It follows that if $x',y'\in X'$ are such that $\Phi(f)'x' = y'$, then 
\[ \Phi'(e\cdot_\op f)x'= \Phi'(e)y'\]
for all $e\in [f]_{\calB^\op}$, and hence $\Phi'(f)x' = y'$ by
construction  of the dual calculus. 
}

\prfnoi
b)\ Fix $x',y'\in X$. Note the following equivalences:
\begin{align*}
\Phi'(f)x'= y' & \gdw\,\, \forall e\in [f]_{\calB^\op} :
                 \Phi'(e\cdot_\op f)x'=
                 \Phi'(e) y'
\\ & \gdw\,\, \forall e\in [f]_{\calB^\op} : \Phi(fe)'x'=
                 \Phi(e)' y'
\\ & \gdw\,\, \forall e\in [f]_{\calB^\op},\, z\in X : \dprod{\Phi(f)\Phi(e)z}{x'} =
     \dprod{\Phi(e)z}{y'}
\\ & \gdw\,\, (x',-y') \perp (\Phi(f) \cap (D_f \oplus X)),
\end{align*}
where we identify $\Phi(f)$ with its graph as a subset of $X \oplus
X$.  On the other hand,
\[ \Phi(f)'x'= y ' \,\,\gdw\,\, (x',-y') \perp \Phi(f).
\]
From this it is evident that $\Phi(f)'\subseteq
\Phi'(f)$. Furthermore, since both operators $\Phi(f)'$ and $\Phi'(f)$
are weakly$^*$ closed, by the Hahn--Banach theorem one has equality
if and only if $\Phi(f) \cap (D_f \oplus X))$ is dense
in $\Phi(f)$. The latter just means that $D_f$ is a core for $\Phi(f)$.

\prfnoi
c)\ If $\Phi(f)$ is bounded, then so is $\Phi(f)'$, and hence by b)
$\Phi'(f) = \Phi(f)'$. Suppose, conversely, that $\Phi'(f) \in
\BL(X')$.  Since  $\Phi'(f)$ has a closed graph for the weak$^*$
topology,  it follows from Theorem \ref{sal.t.cgt} that there is $T\in \BL(X)$ such that 
$\Phi'(f) = T'$.  By b), $\Phi(f)'\subseteq T'$, which in turn implies
that 
\[ T = T''\cap (X\oplus X) \subseteq \Phi(f)'' \cap (X\oplus X) 
= \Phi(f), 
\]
since $\Phi(f)$ is closed, see \cite[Prop.A.4.2.d]{HaaseFC}. 
This implies that $\Phi(f) = T$, so $\Phi(f)$ is indeed bounded.
\end{proof}

A calculus $(\calF, \Phi)$ on a Banach space $X$ is called {\emdf
  dualizable} if  $\calF'= \calF^\op$, i.e., the dual calculus
is defined on $\calF^\op$. Equivalently, $(\calF, \Phi)$ is dualizable
if for each $f\in \calF$ and each $b \in \bdd(\calF, \Phi)$ the space
\[ D_{fb} = \spann\{ \Phi(e)x \suchthat x\in X,\, e, fbe \in
\bdd(\calF, \Phi)\}
\]
is dense in $X$. For a dualizable calculus one has
\[ \bdd(\calF, \Phi) = \bdd(\calF', \Phi')
\]
by c) of Theorem \ref{dua.t.main}.

\medskip
If  $\Phi$ on $\calF$ is a non-dualizable calculus then
${\calF'}^\op$ (i.e., $\calF'$ with the original algebra structure) is
a $\Phi$-regular subalgebra of $\calF$ (since it contains
$\bdd(\calF, \Phi)$)
and we may restrict $\Phi$ to
this algebra. In a sense, ${\calF'}^\op$ is the largest
subalgebra such that the restriction
of $\Phi$ to it is a dualizable calculus.

\section{Topological Extensions}\label{s.top}

The algebraic extension procedure discussed in Section \ref{s.ext}
is based on a ``primary'' or ``elementary'' calculus $\Phi: \calE \to
\BL(X)$ that can be extended. In this section we discuss the form
of possible other----topological---ways of extending a primary
calculus. Whereas the algebraic extension is canonical when a
superalgebra is given, a topological extension depends also on the
presence of a given topological structure on the superalgebra. 

In the following we want to formalize the idea of a topological 
extension in such generality that the extant examples are covered. 
However, we admit that experience with topological extensions as such
is scarce, so that the exposition given here is 
likely to be replaced by a better one some time in the future.

\medskip
Let $\calF$ be an algebra and $\Lambda$ a set.
An {\emdf (algebraic) convergence
structure} on $\calF$ over $\Lambda$ is a relation 
\[  \tau \subseteq \calF^\Lambda\times  \calF
\]
with the following properties: 
\begin{aufziii}
\item $\tau$ is a subalgebra of $\calF^\Lambda \times \calF$. 
\item For each $f\in \calF$ the pair $((f)_{\lambda \in \Lambda}, f)$ is in $\tau$.
(Here, $(f)_{\lambda \in \Lambda}$ is the constant family.) 
\end{aufziii}
The convergence structure is called {\emdf Hausdorff}, if
$\tau$ is actually an operator and not just a relation. If $\Lambda =
\N$, we speak of a {\emdf sequential}  convergence structure.

Given a convergence structure $\tau$, 
one writes $f_\lambda \stackrel{\tau}{\to} f$ in place of 
$((f_\lambda)_\lambda, f) \in \tau$ and says that $(f_\lambda)_\lambda$
{\emdf $\tau$-converges to} $f$. 
From 1) and 2) it follows that 
$\dom(\tau) \subseteq \calF^\Lambda$ is an algebra containing all
constant families. The structure $\tau$ is Hausdorff if and only
if one has 
\[ f_\lambda \stackrel{\tau}{\to} f,\, f_\lambda \stackrel{\tau}{\to} g
\quad \dann\quad f=g.
\]

\medskip
From now on, we consider the following situation: 
$\calE'$ is a unital algebra, $\calE \subseteq \calE'$ is a subalgebra,
and $\Phi: \calE \to \BL(X)$ is a representation; 
$\calA \subseteq \BL(X)$ is a subalgebra such that
$\Phi(\calE)\subseteq \calA$; and 
$\tau = (\tau_1, \tau_2)$ is a pair of 
convergence structures $\tau_1$  on $\calE'$ and $\tau_2$ on $\calA$ 
over the same index set $\Lambda$.
(The latter will be called a {\emdf joint convergence structure} 
on $(\calE',\calA)$
in the following.)

In this situation, the set
\[ \calE^{\tau} := \{ f\in \calE' \suchthat \exists\, (e_\lambda)_\lambda \, \text{in
  $\calE$},\, T\in \calA : e_\lambda \stackrel{\tau_1}{\to} f,\,\,
\Phi(e_\lambda) 
\stackrel{\tau_2} \to
T\}
\]
is a subalgebra of $\calE'$ containing $\calE$. 
Suppose in addition that $\Phi$ is {\emdf closable with respect to
  $\tau$},
which means that 
\beq\label{sec.eq.tau-closable}
   (e_\lambda)_\lambda \in \calE^\Lambda,\,T\in \calA,\,  
  e_\lambda \stackrel{\tau_1}{\to} 0,\, \Phi(e_\lambda) 
   \stackrel{\tau_2}{\to} T\, \quad \dann\quad T=0.
\eeq
Then one can define the {\emdf $\tau$-extension} $\Phi^\tau :
\calE^\tau \to \BL(X)$  of $\Phi$  by
\[ \Phi^\tau(f) := T
\]
whenever $(e_\lambda)_\lambda \in \calE^\Lambda$, $e_\lambda \stackrel{\tau_1}{\to}f$, and
$\Phi(e_\lambda) \stackrel{\tau_2}{\to} T$.
(Indeed, \eqref{sec.eq.tau-closable} just guarantees that $\Phi^\tau$ is
well-defined, i.e., $\Phi^\tau(f)$ does not depend on the chosen $\tau$-approximating
sequence $(e_\lambda)_\lambda$.)

\begin{thm}\label{top.t.top-ext}
The so-defined mapping $\Phi^\tau: \calE^\tau \to \BL(X)$ is 
an algebra homomorphism which extends $\Phi$.
\end{thm}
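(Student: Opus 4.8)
The plan is to verify, one at a time, the defining properties of an algebra homomorphism: that $\Phi^\tau$ restricts to $\Phi$ on $\calE$, and that it is additive, scalar\nobreakdash-homogeneous, and multiplicative. The entire argument rests on a single mechanism: each convergence structure $\tau_i$ is, by definition, a \emph{subalgebra} of the corresponding product, so that $\tau$\nobreakdash-convergent families may be added, scaled, and multiplied termwise, with the limits combining in the expected way. Coupling this with the fact that $\Phi$ is already a homomorphism on $\calE$ does all the work. Note that well-definedness of $\Phi^\tau(f)$—its independence of the chosen approximating family—has already been secured by the closability hypothesis \eqref{sec.eq.tau-closable}, so it need not be revisited.

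First I would check that $\Phi^\tau$ extends $\Phi$. Given $f\in \calE$, I consider the constant family $(f)_\lambda$ in $\calE$. By axiom 2) for $\tau_1$ one has $f\stackrel{\tau_1}{\to} f$, and by the same axiom for $\tau_2$, applied to the constant family $(\Phi(f))_\lambda$ in $\calA$, one has $\Phi(f)\stackrel{\tau_2}{\to} \Phi(f)$. Hence $f\in \calE^\tau$, and by the definition of the $\tau$-extension $\Phi^\tau(f)=\Phi(f)$.

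Next I would establish the homomorphism identities. Fix $f,g\in \calE^\tau$ and choose approximating families $(e_\lambda)_\lambda$, $(d_\lambda)_\lambda$ in $\calE$ with $e_\lambda \stackrel{\tau_1}{\to} f$, $d_\lambda \stackrel{\tau_1}{\to} g$ and $\Phi(e_\lambda)\stackrel{\tau_2}{\to} S$, $\Phi(d_\lambda)\stackrel{\tau_2}{\to} T$, where $S:=\Phi^\tau(f)$ and $T:=\Phi^\tau(g)$. Since $\tau_1$ is a subalgebra of $(\calE')^\Lambda\times\calE'$, the families $e_\lambda+d_\lambda$, $\alpha e_\lambda$ (for a scalar $\alpha$), and $e_\lambda d_\lambda$ lie in $\calE^\Lambda$ and $\tau_1$-converge to $f+g$, $\alpha f$, and $fg$, respectively. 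Because $\Phi$ is a homomorphism on $\calE$, the images of these families are $\Phi(e_\lambda)+\Phi(d_\lambda)$, $\alpha\Phi(e_\lambda)$, and $\Phi(e_\lambda)\Phi(d_\lambda)$; and since $\tau_2$ is a subalgebra of $\calA^\Lambda\times\calA$, these $\tau_2$-converge to $S+T$, $\alpha S$, and $ST$, all of which lie in $\calA$. By the very definition of $\calE^\tau$ this shows $f+g,\ \alpha f,\ fg\in \calE^\tau$, and by well-definedness of $\Phi^\tau$ one reads off $\Phi^\tau(f+g)=S+T$, $\Phi^\tau(\alpha f)=\alpha S$, and $\Phi^\tau(fg)=ST$, which are precisely the required linearity and multiplicativity relations.

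I would stress that there is essentially no obstacle here: the axioms for an algebraic convergence structure (closure under the algebra operations, together with convergence of constant families) were tailored precisely so that this verification is purely formal, and the only genuinely substantive input—that the limit operator is independent of the approximating family—is already encapsulated in the standing closability assumption \eqref{sec.eq.tau-closable}. The single detail worth recording explicitly is that all the limits produced above automatically lie in $\calA$, which is what legitimates calling them values of $\Phi^\tau$; this is guaranteed by $\calA$ being a subalgebra of $\BL(X)$.
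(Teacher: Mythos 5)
Your proposal is correct and is exactly the routine verification the paper has in mind: the paper's own proof of Theorem \ref{top.t.top-ext} consists of the single word ``Straightforward,'' and your argument---constant families for the extension property via axiom 2), and termwise algebra operations on approximating families via the subalgebra axiom 1), combined with well-definedness from \eqref{sec.eq.tau-closable}---is precisely the formal check being deferred. Nothing is missing and nothing differs in approach.
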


\begin{proof}
Straightfoward.
\end{proof}

In practice,  one wants to combine a topological with 
an algebraic extension, and that raises a compatibility
issue. To explain this, let us be more specific.

\medskip

\noindent
Let $\calE$ be an algebraic core for
a calculus $\Phi: \calF \to \Clo(X)$, let 
$\calG$ be a superalgebra of $\calF$ and let
$\calE'$ be a subalgebra of $\calG$ containing $\calE$:
\[ \calF \subseteq \calG\quad \text{and}\quad
\calE \subseteq \calE' \subseteq \calG.
\] 
Suppose further that $\tau= (\tau_1, \tau_2)$ is a joint convergence
structure on $(\calE', \calA)$, where $\calA$ is a subalgebra of
$\BL(X)$ containing $\Phi(\calE)$, and that $\Phi\res{\calE}$ is 
closable with respect to $\tau$. 

As above,  let $\Phi^\tau$ denote the $\tau$-extension of
$\Phi\res{\calE}$ to the
algebra $\calE^\tau \subseteq \calE'$. Starting from
$\calE^\tau$ we can extend $\Phi^\tau$ algebraically to
\[ \calG^\tau := \ancgen{\calE^\tau, \calG, \Phi^\tau},
\]
and we denote this extension again by $\Phi^\tau$.

The question arises whether $\Phi^\tau$ is an extension
of $\Phi$. This problem has been already discussed in
Section \ref{ext.s.succ} in a more general context, so that  
Theorem \ref{ext.t.succ-comp} and the subsequent remarks apply.  
In particular, we obtain the following:

\begin{cor}\label{top.c.top-ext-comp}
In the situation described above, the following assertions hold:
\begin{aufzi}
\item If $f\in \calF \cap \calG^\tau$ then $\Phi^\tau(f) \subseteq
\Phi(f)$, so that $\Phi^\tau(f) = \Phi(f)$ if $\Phi^\tau(f)$ is
bounded. In particular, $\Phi^\tau= \Phi$ on $\calE^\tau \cap
\calF$. 
\item If $f\in \calF$ is such that 
$\cnt(\calE^\tau) \cap [f]_\calE$ is an anchor set, then
$f\in \calG^\tau$ and $\Phi^\tau(f) = \Phi(f)$. 
In particular, $\Phi^\tau$ extends $\Phi$ if
$\calE \subseteq \cnt(\calE^\tau)$.
\end{aufzi}
\end{cor}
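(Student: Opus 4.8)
The plan is to deduce Corollary \ref{top.c.top-ext-comp} directly from Theorem \ref{ext.t.succ-comp}, which was proved earlier in exactly the general ``successive extensions'' setup that the present situation instantiates. First I would verify that the hypotheses match. In the notation of Section \ref{ext.s.succ}, I take the role of the ``inner'' calculus to be $\Phi:\calF \to \Clo(X)$ with algebraic core $\calE$, and the role of the ``outer'' data to be $\calE':=\calE^\tau$ together with the representation $\Psi:=\Phi^\tau\res{\calE^\tau}$, which is an honest algebra homomorphism into $\BL(X)$ by Theorem \ref{top.t.top-ext}. The required compatibility $\Psi\res{\calE}=\Phi\res{\calE}$ holds because $\Phi^\tau$ extends $\Phi\res{\calE}$ by construction (again Theorem \ref{top.t.top-ext}), and $\calE$ is an algebraic core for $\Phi$ by assumption. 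The algebraic extension of $\Psi$ within $\calG$ is precisely $\calG^\tau=\ancgen{\calE^\tau,\calG,\Phi^\tau}$, so the hypotheses of Theorem \ref{ext.t.succ-comp} are met with $(\calE',\Phi')=(\calE^\tau,\Phi^\tau)$.

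With the translation fixed, part a) follows from conditions 1) and 2) of Theorem \ref{ext.t.succ-comp}: if $f\in \calF\cap\calG^\tau$, then that theorem's clauses give $\Phi^\tau(f)\subseteq\Phi(f)$, hence equality whenever $\Phi^\tau(f)$ is bounded (since $\Phi(f)$ is an operator). The final sentence of a) is the special case $f\in\calE^\tau\cap\calF$, where membership in $\calG^\tau$ is automatic because $\calE^\tau\subseteq\calG^\tau$. For part b), the hypothesis that $\cnt(\calE^\tau)\cap[f]_\calE$ is an anchor set is verbatim condition 5) of Theorem \ref{ext.t.succ-comp} (with $\calE'=\calE^\tau$), which yields both $f\in\calG^\tau$ and $\Phi^\tau(f)=\Phi(f)$. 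The concluding ``in particular'' reduces to the observation that if $\calE\subseteq\cnt(\calE^\tau)$, then for every $f\in\calF$ the set $\cnt(\calE^\tau)\cap[f]_\calE$ contains $[f]_\calE$, which is an anchor set because $\calE$ is an algebraic core for $\Phi$; this is exactly how case 6) is reduced to case 5) in the proof of Theorem \ref{ext.t.succ-comp}.

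I expect the only genuine obstacle to be bookkeeping: making sure the abstract ``successive extension'' diagram is instantiated correctly, in particular that $\calE^\tau$ really plays the role of $\calE'$ (it is a subalgebra of $\calG$ containing $\calE$, and $\Phi^\tau\res{\calE^\tau}$ agrees with $\Phi$ on $\calE$) and that $\calG^\tau$ is literally the algebraic extension $\ancgen{\calE^\tau,\calG,\Phi^\tau}$ appearing in the theorem. Once this identification is stated cleanly, both parts are immediate citations of the corresponding numbered cases, so no new computation is needed. Accordingly I would keep the proof to a few lines: fix the dictionary, then invoke parts 1), 2), 5), and 6) of Theorem \ref{ext.t.succ-comp} for the respective assertions.
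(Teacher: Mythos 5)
Your proposal is correct and is essentially the paper's own argument: the paper proves the corollary precisely by remarking that the topological-extension setup is an instance of the successive-extensions situation of Section \ref{ext.s.succ} (with $\calE^\tau$ in the role of $\calE'$, $\Phi^\tau\res{\calE^\tau}$ in the role of $\Psi$, and $\calG^\tau=\ancgen{\calE^\tau,\calG,\Phi^\tau}$ in the role of $\calF'$) and then citing Theorem \ref{ext.t.succ-comp} — cases 1), 2) for part a) and cases 5), 6) for part b), exactly as you do. The only cosmetic point is that the containment $\Phi^\tau(f)\subseteq\Phi(f)$ in part a) comes from Lemma \ref{urc.l.uni} as invoked inside the proof of cases 1) and 2) rather than from the theorem's statement itself, which your write-up glosses over but which does not affect correctness.
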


If $\calE$ is commutative and $\tau_1$ is Hausdorff, then
$\calE^\tau$ is also commutative. Hence, in this case, 2)
is applicable and it follows that $\Phi^\tau$ extends
$\Phi$.

\vanish{
The following lemma deals with compatibility of  a
topological extension with a given calculus.

\begin{lem}\label{top.l.comp}
Let $\Phi: \calF \to \Clo(X)$ be a proto-calculus and
$\calE \subseteq \bdd(\calF, \Phi)$ a subalgebra. Suppose that 
$\tau$ is a sequential convergence structure on $\calF$ with respect to which
the restriction $\Phi\res{\calE}$ is closable. Then 
\beq\label{top.eq.comp1}   \Phi(f) \subseteq \Phi^\tau(f)
\qquad \text{for each $f\in \calE^\tau$ which is $\Phi$-anchored in $\calE$.}
\eeq
One has $\Phi(f) = \Phi^\tau(f)$ if at least one of the following
assertions holds.
\begin{aufziii}
\item $[f]_\calE$ determines $\Phi(f)$.
\item $[f]_\calE$ is a $\Phi$-anchor set and $\Phi(f)$ is densely defined.
\end{aufziii}
\end{lem}

\begin{proof}
Let $f\in \calE^\tau$ be $\Phi$-anchored in $\calE$. For each $e\in
[f]_\calE$ we have $e\in \calE$ and $ef\in \calE$, and hence
\beq\label{top.eq.comp2} \Phi(ef) = \Phi^\tau(ef)= \Phi^\tau(e) \Phi^\tau(f) = 
\Phi(e) \Phi^\tau(f).
\eeq
Consequently, if $x,y \in X$ are such that $\Phi(f)x = y$, then
\[ \Phi(e)y = \Phi(ef)x = \Phi(e) \Phi^\tau(f)x.
\]
Since, by hypothesis, $[f]_\calE$ is a $\Phi$-anchor set, we obtain
$\Phi^\tau(f)x = y$.  This yields \eqref{top.eq.comp1}.  

\prfnoi
1) If $[f]_\calE$ determines $\Phi(f)$, then from  \eqref{top.eq.comp2} it
follows that $\Phi^\tau(f) \subseteq \Phi(f)$. The converse inclusion
has just been proved, as a determining set is also an anchor set. 

\prfnoi
2) If $[f]_\calE$ is an anchor set then, as proved above, 
$\Phi(f) \subseteq \Phi^\tau(f)$. But the latter operator is bounded, 
and the former is closed and densely defined. Hence they must agree.
\end{proof}

Lemma \ref{top.l.comp} shows in particular that if a calculus
$\Phi: \calF \to \Clo(X)$ 
is obtained by an algebraic extension of  a given ``primary'' calculus
$\Phi\res{\calE}: \calE \to \BL(X)$, then 
extending that primary calculus topologically within  $\calF$ does not lead
to something new. However, the topological extension might involve
also objects outside of $\calF$ (within an even larger algebra
$\calG$, say) and then one obtains a strict (but
compatible) extension of the original calculus. 
Of course, one can extend $\Phi^\tau$ algebraically, and
Theorem \ref{urc.t.com} then yields that this new algebraic extension is
still compatible with the original calculus on $\calF$.

}

\begin{rems}
The idea of a topological extension in the abstract theory of
  functional
calculus was introduced in \cite{Haa05b}, with  a (however
easy-to-spot)   mistake in the formulation  of closability \cite[(5.2)]{Haa05b}.
There, with an immediate application in mind, the discussion was
still informal. 

In our attempt to formalize it, we here introduce
the notion of a ``convergence structure'' which, admittedly, 
  is {\em ad hoc}.  We have not browsed through the literature to find a suitable
and already established notion. Maybe one would prefer a richer axiomatic
tableau for  such a notion and is tempted to add axioms, e.g., 
require that $\Lambda$ is directed and that families that coincide
eventually display the same convergence behaviour.
On the other hand, only axioms
1) and 2) are needed to prove Theorem \ref{top.t.top-ext}. 

In practice, one may often  choose $\tau_2$ to be operator norm or strong
convergence, but other choices are possible. (See Sections
\ref{s.sectop} and \ref{sgr.s.topext} below for
an interesting example. It was the latter example that led us to acknowledge
that one needs flexibility of the convergence notion also on the
operator side.)
\end{rems}

\part{Examples}

In this second part of the article,  
we want to illustrate the abstract theory with
some well-known examples. However, we focus on the supposedly  less
well-known aspects.


\section{Sectorial Operators}\label{s.sec}

A closed operator $A$ on a Banach space $X$ is called {\emdf
sectorial} if there is $\omega \in [0, \upi)$ such that 
$\spec(A)$ is contained in the sector $\cls{\sector{\omega}}$
and the function $\lambda \mapsto \lambda R(\lambda,A)$ is
uniformly bounded outside every larger sector.  The minimal
$\omega$ with  this property is called the
{\emdf sectoriality angle} and is denoted by $\omega_\sct(A)$.

For $\omega >  0$ we let
$\calE(\sector{\omega})$ be the set of functions
$f\in \Ha^\infty(\sector{\omega})$ such that 
\[  \int_{\rand{\sector{\delta}}} \abs{f(z)}\frac{\abs{\ud{z}}}{\abs{z}}
<\infty \qquad \text{for all $0 \le \delta < \omega$}.
\]
If $f \in \calE(\sector{\omega})$ and 
$A$ is a sectorial operator on $X$ of angle $\omega_\sct(A) < \omega$ then
we define
\beq\label{sec.eq.def} 
\Phi_A^\omega(f) := \frac{1}{2\upi \ui} \int_{\rand{\sector{\delta}}}
f(z) R(z,A) \ud{z}
\eeq
Note that the norm condition on the resolvent of $A$ and the
integrability condition on $f$ just match in order to render
this integral absolutely convergent. It is a classical fact
that 
\[ \Phi_A^\omega: \calE(\sector{\omega}) \to \BL(X)
\]
is an algebra homomorphism and 
\[  \Phi_A^\omega\bigl( \frac{\bfz}{(1 + \bfz)^2}\bigr) = 
A(1+A)^{-1}.
\]
Standard complex analysis arguments yield that for each 
$f\in \calE(\sector{\omega})$ one has
\[ \lim_{z\to 0} f(z) = \lim_{z\to \infty} f(z) = 0
\qquad \text{whenever $\abs{\arg z}\le  \delta$}
\]
for each $0 < \delta < \omega$. As a consequence, 
\[  \ker(A) \subseteq \ker \Phi_A^\omega(f).
\]
It follows that $\Phi_A^\omega$ is degenerate if $A$ is
not injective.  

Since we do not want to assume the  injectivity of $A$,  
we could follow Remark \ref{ext.r.degenerate}
and extend $\Phi^\omega_A$ to the unital algebra
\[ \calE_1(\sector{\omega}) := \calE(\sector{\omega}) \oplus \C \car
\]
and make this the basis of the algebraic extension procedure
from Section \ref{s.ext}. 
However, it is easily seen that the function $(1+\bfz)^{-1}$
is not anchored in $\calE_1(\sector{\omega})$. 
So, the resulting calculus
would be ``too small'' in the sense that it would not cover
some natural functions of $A$. 

In order to deal with this problem, one extends $\Phi_A^\omega$
further to the algebra
\[  \calE_{e}(\sector{\omega}) := 
\calE(\sector{\omega}) \oplus \C\car \oplus \C \frac{1}{1 + \bfz}
\]
by definining
\[ \Phi_A( (1 + \bfz)^{-1}) := (1+A)^{-1}.
\]
It follows from properties of $\Phi_A^\omega$ on 
$\calE(\sector{\omega})$ that this extension is indeed
an algebra homomorphism.

At this point one may perform an algebraic extension as 
in Theorem \ref{ext.t.ext} 
within a ``surrounding'' algebra $\calF$. 
A natural choice for $\calF$ is the 
field $\Mer(\sector{\omega})$ of all meromorphic functions on the sector $\sector{\omega}$.
The domain of the resulting calculus, which
is again denoted by $\Phi_A^\omega$,  is the algebra
\[ \dom(\Phi_A^\omega) := \ancgen{\calE_{e}(\sector{\omega}), \Mer(
\sector{\omega}), \Phi_A^\omega}.
\]
Whereas the algebras $\calE$ and $\calE_e$ are
described independently of $A$, the algebra
$\dom(\Phi_A^\omega)$ is heavily dependent on $A$, and
is, as a whole,  quite arcane in general.

\medskip
Note that there is still a dependence of our calculus on 
the choice of $\omega > \omega_\sct(A)$. This can be
eliminated as follows: for $\omega_1 > \omega_2> \omega_{\sct}(A)$
one has a natural embedding
\[ \eta: \Mer(\sector{\omega_1}) \to 
\Mer(\sector{\omega_2}),\qquad \eta(f) := f\res{\sector{\omega_2}}.
\]
By the identity theorem of complex analysis, 
$\eta$ is injective. Of course we expect
compatibility, i.e, 
\[ \Phi_A^{\omega_2}(f\res{\sector{\omega_2}})
= \Phi_A^{\omega_1}(f)\qquad \text{for each $f\in \dom(\Phi_A^{\omega_2})$}.
\]
Since all involved algebras are commutative,
by  Theorem \ref{urc.t.com} this has to be verified only
for functions $f\in \calE_e(\sector{\omega_1})$, and hence
effectively only for functions $f\in \calE(\sector{\omega_1})$. 
For such functions it is a consequence of a path-deformation
argument.

\medskip
By letting $\omega$ approach $\omega_\sct(A)$ from above,
we obtain a ``tower'' of larger and larger algebras
and  their ``union''
\[  \calE[\sector{\omega_\sct(A)}]
:= \bigcup_{\omega > \omega_\sct(A)} \calE(\sector{\omega}),
\]
and likewise for $\calE_e[\sector{\omega_\sct(A)}]$ and 
$\Mer[\sector{\omega_\sct(A)}]$. A precise definition
of this union would require the notion of a meromorphic function germ
on  $\cls{\sector{\omega_\sct(A)}}\ohne\{0\}$. The resulting
calculus is called the {\emdf sectorial calculus} for $A$
and ist denoted by  $\Phi_A$ here. 
It can be seen as an ``inductive limit'' of the calculi $\Phi_A^\omega$.


\section{Topological Extensions of the Sectorial Calculus}\label{s.sectop}

Of course the question arises whether the sectorial
calculus $\Phi_A$ covers
all 
``natural'' choices for functions $f$ of $A$. The answer
is ``no'', at least when the operator $A$ is not injective.

To understand this, we look at functions of the form
\begin{aufziii}
\item $f(z) = \int_{\R_+} \frac{z}{t+ z} \, \mu(\ud{t})$ 
\quad and
\item $g(z) =  \int_{\R_+} (tz)^n \ue^{-tz}\, \mu(\ud{t})$,
\end{aufziii}
where  $\mu \in \eM(\R_+)$ is a 
complex Borel measure on 
$\R_+ = [0,\infty)$.
It is easy to see that by 1) a holomorphic
function $f$ on $\sector{\upi}$ is defined, bounded on each smaller
sector. And by 2), a holomorphic function $g$ on $\sector{\upi/2}$
is defined, bounded on each smaller sector.

Of course, in any reasonable functional calculus one would expect 
\beq\label{sectop.eq.Hirsch}
  f(A) = \int_{\R_+} A(t+A)^{-1}  \, \mu(\ud{t}) 
\eeq
for each sectorial operator $A$ and 
\beq\label{sectop.eq.sgrp}
 g(A) = \int_{\R_+} (tA)^n\ue^{-tA}  \, \mu(\ud{t})
\eeq
for each sectorial operator $A$ with $\omega_{\sct}(A) < 
\frac{\upi}{2}$.   If $A$ is injective, this is true for
the sectorial calculus.

However, if $A$ is not injective, then 
$\mu$ can be chosen so that $[f]_{\calE_e}$ is
not an anchor set and, consequently, 
$f$ is not contained in the domain of $\Phi_A$.
A prominent example for this situation is the function
\[ f(z) = \frac{1}{\lambda - \log z}
= \int_0^\infty \frac{-1}{(\lambda - \log t)^2 + \upi^2}
(t+ A)^{-1} \, \ud{t},
\]
which plays a prominent role for  Nollau's result on
operator logarithms \cite[Chapter 4]{HaaseFC}.
Similar remarks apply in case 2). 

The mentioned  ``defect'' of the sectorial calculus $\Phi_A$ can be mended by 
passing to a suitable
topological extension as described in Section \ref{s.top}.
Actually, this has been already observed in \cite{Haa05b},
where uniform convergence
was used as the underlying convergence structure.

\medskip
\noindent
Here, we intend to generalize the result from \cite{Haa05b} by 
employing a weaker convergence structure on a larger
algebra.  Define
\[  \Ha^\infty(\sector{\omega}\cup\{0\})
:= \{ f\in \Ha^\infty(\sector{\omega}) \suchthat 
f(0) := \lim_{z\searrow 0} f(z) \,\,\text{exists}\}.
\]
We  say that a
sequence $(f_n)_n$ in $\Ha^\infty(\sector{\omega}\cup\{0\})$
converges {\emdf pointwise and boundedly}
(in short: bp-converges) on $\sector{\omega}\cup\{0\}$
to  $f\in \Ha^\infty(\sector{\omega}\cup\{0\})$
if $f_n(z) \to f(z)$ for each $z\in \sector{\omega}\cup\{0\}$ and
$\sup_n \norm{f_n}_{\infty, \sector{\omega}} < \infty$. It is obvious
that bp-convergence is a Hausdorff sequential convergence structure
as introduced in Section \ref{s.top}.  We take
bp-convergence as the first component of the joint
convergence structure $\tau$ we need for a topological extension.

The second component, is described as follows. 
For a set $\calB \subseteq \BL(X)$ let 
\[ \calB':= \{ S\in \BL(X) \suchthat \forall\, B \in \calB: SB = BS \}
\]
be its {\emdf commutant} within $\BL(X)$.  Let 
\[ \calA_A := \{ (1+A)^{-1}\}'
= \{ R(\lambda, A) \suchthat  \lambda \in \resolv(A)\}'
\]
For $((T_n)_n, T) \in \calA_A^\N
\times \calA_A$ we write
\[ T_n \stackrel{\tau_A^s}{\to} T
\]
if there is a point-separating set $\calD \subseteq \calA_A'$ such that 
\[   DT_n \to DT \quad \text{strongly, for each $D\in \calD$}.
\]
(Recall that $\calF$ is point-separating if $\bigcap_{D\in \calD}
\ker(D) = \{0\}$.)
Note that  $\calA_A'$ is a  commutative unital subalgebra 
of $\BL(X)$ closed with respect to strong convergence and containing
all resolvents of $A$.

\begin{lem}
The relation $\tau_A^s$ on $\calA_A^\N
\times \calA_A$ is an algebraic  Hausdorff convergence structure. 
\end{lem}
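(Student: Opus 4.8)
The plan is to verify in turn the three requirements of an algebraic Hausdorff convergence structure from Section~\ref{s.top}: that $\tau_A^s$ is a subalgebra of $\calA_A^\N \times \calA_A$ (axiom~1), that every constant family lies in $\tau_A^s$ (axiom~2), and that $\tau_A^s$ is single-valued (the Hausdorff condition). Throughout I would use the two structural facts recorded just before the statement: $\calA_A'$ is a \emph{commutative} unital subalgebra of $\BL(X)$, and, being the bicommutant, every $D\in\calA_A'$ commutes with every element of $\calA_A$. The technical device underpinning everything is the elementary observation that the product of point-separating sets is point-separating: if $\calD_1,\calD_2\subseteq\calA_A'$ are point-separating, then $\calD_1\calD_2=\{D_1D_2\suchthat D_1\in\calD_1,\,D_2\in\calD_2\}\subseteq\calA_A'$ is again point-separating, since for $x\neq0$ one first chooses $D_2\in\calD_2$ with $D_2x\neq0$ and then $D_1\in\calD_1$ with $D_1(D_2x)\neq0$.

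Axiom~2 is immediate: for $T\in\calA_A$ the constant family is witnessed by $\calD=\{\Id\}$, which is point-separating. Closure under scalar multiplication is equally easy, using the same witnessing set. For sums and products, suppose $T_n\stackrel{\tau_A^s}{\to}T$ with witness $\calD_1$ and $S_n\stackrel{\tau_A^s}{\to}S$ with witness $\calD_2$; I would take the common witness $\calD:=\calD_1\calD_2$. For the sum, fix $D_1D_2\in\calD$. Applying the bounded operator $D_2$ to the strongly convergent sequence $D_1T_n\to D_1T$ gives $D_1D_2T_n=D_2(D_1T_n)\to D_2D_1T=D_1D_2T$, and symmetrically $D_1D_2S_n\to D_1D_2S$; adding and reassembling by commutativity yields $D(T_n+S_n)\to D(T+S)$ strongly. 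Since $T+S,\lambda T\in\calA_A$ (an algebra), these pairs indeed lie in $\calA_A^\N\times\calA_A$.

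The main obstacle is multiplication, since $T_nS_n$ pairs two factors that converge only strongly and need not be uniformly bounded as products. Here commutativity of $\calA_A'$ with $\calA_A$ lets me factor $D_1D_2T_nS_n=(D_1T_n)(D_2S_n)$, and the uniform boundedness principle—applied to the strongly convergent, hence pointwise bounded, sequence $(D_1T_n)_n$—supplies $M:=\sup_n\|D_1T_n\|<\infty$. The standard splitting
\[
(D_1T_n)(D_2S_n)x-(D_1T)(D_2S)x
=(D_1T_n)(D_2S_nx-D_2Sx)+\bigl((D_1T_n)-(D_1T)\bigr)(D_2Sx)
\]
then tends to $0$: the first summand is controlled by $M\,\|D_2S_nx-D_2Sx\|\to0$, the second by strong convergence of $D_1T_n\to D_1T$ evaluated at the fixed vector $D_2Sx$. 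Reassembling by commutativity gives $D(T_nS_n)\to D(TS)$ strongly for each $D\in\calD_1\calD_2$, so $T_nS_n\stackrel{\tau_A^s}{\to}TS$ and axiom~1 holds.

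Finally, for the Hausdorff property, suppose $T_n\stackrel{\tau_A^s}{\to}T$ and $T_n\stackrel{\tau_A^s}{\to}S$, witnessed by $\calD_1$ and $\calD_2$ respectively. For any $D_1D_2\in\calD_1\calD_2$ one computes, exactly as above, both $D_1D_2T_n=D_1(D_2T_n)\to D_1D_2S$ and $D_1D_2T_n=D_2(D_1T_n)\to D_1D_2T$ strongly; uniqueness of strong limits forces $D_1D_2(T-S)=0$. As $\calD_1\calD_2$ is point-separating, $(T-S)x=0$ for every $x$, whence $T=S$. I expect no difficulty beyond the multiplication step; the rest is a careful but routine bookkeeping of commutation relations and strong limits.
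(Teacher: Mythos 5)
Your proof is correct and takes essentially the same route as the paper, whose entire proof is the one-line remark that everything ``follows easily from the fact that $\calA_A'$ is commutative''. Your verification fills in exactly the details this glosses over --- most notably the device of combining two witnessing point-separating sets $\calD_1,\calD_2$ into the product $\calD_1\calD_2$ (legitimate since $\calA_A'$ is a commutative algebra whose elements commute with all of $\calA_A$), together with the uniform-boundedness splitting for products --- and all steps check out.
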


\begin{proof}
This follows easily from the fact that $\calA'$ is commutative. 
\end{proof}

\begin{rem}
We note that, in particular, one has
\[  (1 +A)^{-m} T_n \to (1+A)^{-m} T \,\, \text{strongly}
\quad \dann\quad T_n \stackrel{\tau_A^s}{\to} T
\]
that for any $m \in \N_0$.
This means that $\tau_A^s$-convergence is weaker than strong convergence
in any extrapolation norm associated with $A$. 
\end{rem}

We shall show that $\Phi_A$ on $\calE_e(\sector{\omega})$ is
closable with respect to 
the joint convergence structure
\beq  \tau = \text{( bp-convergence on
  $\Ha^\infty(\sector{\omega}\cup\{0\})$ ,
 $\tau_A^s$ on $\calA_A$ )}.
\eeq
We need the following auxiliary
information.



\begin{lem}\label{sectop.l.aux}
Let $A$ be sectorial, let $\omega_\sct(A) < \omega < \upi$ and let 
$e\in \calE(\sector{\omega})$. Then 
\[ \ran(\Phi_A(e)) \subseteq \cls{\ran}(A).
\]
\end{lem}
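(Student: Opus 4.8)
The plan is to compute directly with the Cauchy integral defining $\Phi_A(e)$ and to split off a factor of $A$ using the resolvent identity, so that the integrand lands in $\ran(A)$. Fix $\omega_\sct(A) < \delta < \omega$ and $x \in X$; then
\[
\Phi_A(e)x = \frac{1}{2\upi\ui}\int_{\rand{\sector{\delta}}} e(z)\,R(z,A)x\,\ud z
\]
converges absolutely, since $\norm{R(z,A)} = O(1/\abs{z})$ on $\rand{\sector{\delta}}$ by sectoriality and $\int_{\rand{\sector{\delta}}}\abs{e(z)}\tfrac{\abs{\ud z}}{\abs{z}}<\infty$ is exactly the defining condition for $e\in\calE(\sector{\omega})$. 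The key algebraic input is the identity $z\,R(z,A) = \Id + A\,R(z,A)$, which I would write as $R(z,A) = \tfrac1z\Id + \tfrac1z A\,R(z,A)$ and insert into the integral to obtain two pieces.

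First I would show that the scalar piece vanishes, i.e.\ $\int_{\rand{\sector{\delta}}}\tfrac{e(z)}{z}\,\ud z = 0$. Since $\delta<\omega$, the function $e(z)/z$ is holomorphic on the closed truncated sector $\{\varepsilon\le\abs{z}\le R,\ \abs{\arg z}\le\delta\}$, so by Cauchy's theorem the integral over its full boundary is zero. As recorded in the excerpt, $e(z)\to 0$ uniformly on $\{\abs{\arg z}\le\delta\}$ as $z\to 0$ and as $z\to\infty$; hence on the closing arc of radius $\varepsilon$ the integrand $e(z)/z$ has modulus $\le \varepsilon^{-1}\sup_{\abs{z}=\varepsilon}\abs{e(z)}$ while the arc length is $O(\varepsilon)$, so that arc contributes $O(\sup_{\abs{z}=\varepsilon}\abs{e})\to 0$, and the same estimate at radius $R$ gives $O(\sup_{\abs{z}=R}\abs{e})\to 0$. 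Letting $\varepsilon\to 0$ and $R\to\infty$ leaves exactly $\int_{\rand{\sector{\delta}}}\tfrac{e(z)}{z}\,\ud z=0$, so the $\tfrac1z\Id$ term drops out and
\[
\Phi_A(e)x = \frac{1}{2\upi\ui}\int_{\rand{\sector{\delta}}}\frac{e(z)}{z}\,A\,R(z,A)x\,\ud z.
\]
This remaining integral again converges absolutely, because $\norm{A R(z,A)} = \norm{zR(z,A)-\Id}$ is uniformly bounded on $\rand{\sector{\delta}}$ while $\int_{\rand{\sector{\delta}}}\abs{e(z)}\tfrac{\abs{\ud z}}{\abs{z}}<\infty$.

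Finally I would observe that each integrand value $\tfrac{e(z)}{z}A R(z,A)x = A\bigl[\tfrac{e(z)}{z}R(z,A)x\bigr]$ lies in $\ran(A)\subseteq\cls{\ran}(A)$, and that $\cls{\ran}(A)$ is a \emph{closed} subspace of $X$. Since a Bochner integral of an $X$-valued integrable function whose values lie in a fixed closed subspace again lies in that subspace (equivalently, the integral is a norm-limit of Riemann sums, each of which is $A$ applied to a finite sum and hence in $\ran(A)$), I conclude $\Phi_A(e)x\in\cls{\ran}(A)$, as claimed. I expect the main obstacle to be the vanishing of the scalar integral: it depends on using the \emph{uniform} (not merely raywise) decay of $e$ near $0$ and $\infty$ on subsectors to kill the closing arcs. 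It is worth noting that one only obtains $\cls{\ran}(A)$ rather than $\ran(A)$ here, because the naive ``primitive'' $\tfrac{1}{2\upi\ui}\int\tfrac{e(z)}{z}R(z,A)x\,\ud z$ need not converge (the factor $1/z$ worsens the singularity at the origin, and a general $e\in\calE(\sector{\omega})$ is only logarithmically integrable there), so one cannot in general pull $A$ out of the integral.
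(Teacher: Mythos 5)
Your proof is correct, but it follows a genuinely different route from the paper's. The paper argues by multiplicative approximation: it sets $\vphi_n := \frac{n\bfz}{1+n\bfz}$, notes that $\vphi_n e \to e$ pointwise and boundedly so that $\Phi_A(\vphi_n e)\to \Phi_A(e)$ in operator norm by dominated convergence applied to the defining integral, and then uses the calculus identity $\Phi_A(\vphi_n e) = nA(1+nA)^{-1}\Phi_A(e)$ to exhibit $\Phi_A(e)$ as a norm limit of operators whose ranges lie in $\ran(A)$. You instead work directly inside the Cauchy integral: the resolvent identity $R(z,A) = \tfrac1z\Id + \tfrac1z AR(z,A)$ splits off the scalar term $\bigl(\frac{1}{2\upi\ui}\int_{\rand{\sector{\delta}}}\tfrac{e(z)}{z}\,\ud z\bigr)x$, which you kill by Cauchy's theorem using the uniform decay of $e\in\calE(\sector{\omega})$ at $0$ and $\infty$ on closed subsectors (exactly the fact the paper records before the lemma, and your arc estimates are the right way to use it), leaving a Bochner integral of $\ran(A)$-valued vectors, which lies in the closed subspace $\cls{\ran}(A)$. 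Each approach has its merits: the paper's is shorter given the homomorphism and resolvent-commutation properties of the elementary calculus, and its approximation pattern (the $\vphi_n$ trick plus dominated convergence) is reused in the proof of Theorem \ref{sectop.t.main}; yours is more self-contained at the level of the integral \eqref{sec.eq.def}, makes visible exactly where the factor of $A$ comes from, and your closing remark---that one cannot pull $A$ out of the integral because $\int \abs{e(z)}\,\abs{\ud z}/\abs{z}^2$ may diverge near the origin, which is why only $\cls{\ran}(A)$ and not $\ran(A)$ is obtained---is a correct and worthwhile observation that the paper's proof leaves implicit.
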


\begin{proof}
Let $\vphi_n := \frac{n\bfz}{1 + n\bfz}$. 
Then 
$\vphi_n \to \car$ pointwise and boundedly on $\sector{\omega}$. 
By Lebesgue's theorem, $\Phi_A(\vphi_n e)\to \Phi_A(e)$ 
in norm. But
\[  \Phi_A(\vphi_n e) = nA (1 + nA)^{-1} \Phi_A(e). 
\]
The claim follows. 
\end{proof}

Now we can head for the main result.

\begin{thm}\label{sectop.t.main}
Let $A$ be a sectorial operator on a Banach space $X$,
let $\omega\in (\omega_\sct(A),\upi)$ and let 
$(f_n)_n$ be a sequence in $\Ha^\infty(\sector{\omega}\cup\{0\})$  
such that $f_n \to 0$ pointwise and boundedly on $\sector{\omega}
\cup \{0\}$. Suppose that 
$\Phi_A(f_n)$ is defined and  bounded for each $n\in\N$, and
that $\Phi_A(f_n) \stackrel{\tau_2}{\to} T \in \calA_A$ strongly. 
Then $T= 0$.
\end{thm}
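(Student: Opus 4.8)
The plan is to force $T$ to annihilate enough of $X$ by transporting vanishing from the sequence $\Phi_A(f_n)$ to the limit $T$ through the point-separating family $\calD\subseteq\calA_A'$. Concretely, let $\calD\subseteq\calA_A'$ be point-separating with $D\Phi_A(f_n)\to DT$ strongly for each $D\in\calD$. The transfer mechanism I will use repeatedly is this: if $S\in\calA_A$ satisfies $S\Phi_A(f_n)\to 0$ in operator norm, then, since $D$ commutes with $S$, one has $D(ST)=S(DT)=\lim_n SD\Phi_A(f_n)=\lim_n DS\Phi_A(f_n)=D\bigl(\lim_n S\Phi_A(f_n)\bigr)=0$ for every $D\in\calD$; as $\bigcap_{D\in\calD}\ker(D)=\{0\}$, this yields $ST=0$. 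After replacing $f_n$ by $f_n-f_n(0)\car$, which alters $\Phi_A(f_n)$ by $f_n(0)\Id\to 0$ and hence leaves the $\tau_A^s$-limit $T$ unchanged, I may assume $f_n(0)=0$.

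Two facts about the sectorial calculus will be recorded first. (i) (\emph{Boundary rule}) For $x\in\ker(A)$ and any $g\in\dom(\Phi_A)$ one has $\Phi_A(g)x=g(0)x$: choosing a regularizer $e\in[g]_{\calE_e}$ gives $eg\in\calE_e(\sector{\omega})$, and on $\ker(A)$ every $h\in\calE_e(\sector{\omega})$ acts as $\Phi_A(h)x=h(0)x$ (immediate from $\Phi_A((1+\bfz)^{-1})=(1+A)^{-1}$, $\Phi_A(\car)=\Id$, and $\ker(A)\subseteq\ker\Phi_A(h)$ with $h(0)=0$ for $h\in\calE(\sector{\omega})$); hence $\Phi_A(e)\bigl(\Phi_A(g)x-g(0)x\bigr)=0$, and since $[g]_{\calE_e}$ is an anchor set, $\Phi_A(g)x=g(0)x$. (ii) $\bigcap_{e\in\calE(\sector{\omega})}\ker\Phi_A(e)=\ker(A)$, since already $\ker\Phi_A(\tfrac{\bfz}{(1+\bfz)^2})=\ker(A(1+A)^{-2})=\ker(A)$. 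I also use the standard property $\ker(A)\cap\cls{\ran}(A)=\{0\}$ of sectorial operators, together with the observation that every $D\in\calD\subseteq\calA_A'$ leaves both $\ker(A)$ and $\cls{\ran}(A)$ invariant, because it commutes with $(1+A)^{-1}$ and with $A(1+A)^{-1}$.

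Now the two halves. For the range side, fix $e\in\calE(\sector{\omega})$; then $ef_n\in\calE(\sector{\omega})$ with $ef_n\to 0$ pointwise and boundedly, and the integrand of the defining contour integral is dominated by $(\sup_n\norm{f_n}_{\infty})\,\abs{e(z)}\,\norm{R(z,A)}$, which is integrable along $\rand\sector{\delta}$. Hence by dominated convergence $\Phi_A(ef_n)=\Phi_A(e)\Phi_A(f_n)\to 0$ in operator norm (the product rule being Theorem~\ref{afc.t.pro-cal}, as $\Phi_A(f_n)$ is bounded). Applying the transfer mechanism with $S=\Phi_A(e)\in\calA_A$ gives $\Phi_A(e)T=0$, and letting $e$ range over $\calE(\sector{\omega})$ yields, by (ii), $\ran(T)\subseteq\ker(A)$.

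The domain side is the main obstacle, and is where duality enters: I claim $\ran\Phi_A(f_n)\subseteq\cls{\ran}(A)$. One cannot split off the $\ker(A)$-part directly, since in a general Banach space $X\neq\ker(A)\oplus\cls{\ran}(A)$; instead I pass to the dual calculus of Section~\ref{s.dua}. The dual operator $A'$ is again sectorial with $R(z,A')=R(z,A)'$, so for $x'\in\ker(A')=(\cls{\ran}(A))^\perp$ the boundary rule (i) applied to $A'$ gives $\Phi_A(f_n)'x'=\Phi_{A'}(f_n)x'=f_n(0)x'=0$, where the identification of $\Phi_A(f_n)'$ with the value of the sectorial calculus of $A'$ is justified by Theorem~\ref{dua.t.main} (legitimate because $\Phi_A(f_n)$ is bounded). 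Thus $\dprod{\Phi_A(f_n)x}{x'}=0$ for all $x\in X$ and all $x'\in(\cls{\ran}(A))^\perp$, i.e. $\ran\Phi_A(f_n)\subseteq\cls{\ran}(A)$. Since each $D\in\calD$ preserves the closed subspace $\cls{\ran}(A)$, it follows that $DTx=\lim_n D\Phi_A(f_n)x\in\cls{\ran}(A)$; as $D$ also preserves $\ker(A)$ and $\ran(T)\subseteq\ker(A)$, we obtain $DTx\in\ker(A)\cap\cls{\ran}(A)=\{0\}$. Hence $DT=0$ for every $D\in\calD$, and point-separation forces $T=0$.
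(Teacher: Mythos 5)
Your skeleton coincides with the paper's: normalize to $f_n(0)=0$, prove a kernel-side containment ($\Phi_A(e)T=0$ for $e\in\calE(\sector{\omega})$, hence $\ran(T)\subseteq\ker(A)$), prove a range-side containment in $\cls{\ran}(A)$, and conclude via $\ker(A)\cap\cls{\ran}(A)=\{0\}$ together with point-separation of $\calD$. Your transfer mechanism, the reduction to $f_n(0)=0$, the boundary rule on $\ker(A)$ (fine as applied, since $f_n$ has a limit at $0$ and $\Phi_A(f_n)$ is bounded), and the kernel side via dominated convergence are all correct; the paper does the same with the single function $e=\bfz(1+\bfz)^{-2}$. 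The genuine gap is in the range side, i.e., in your proof of $\ran\Phi_A(f_n)\subseteq\cls{\ran}(A)$ by duality.

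Two problems there. First, neither the theorem nor the paper's definition of sectoriality assumes $A$ densely defined; if $\dom(A)$ is not dense, then $A'$ is only a linear relation, so ``the sectorial calculus of $A'$'' and the boundary rule for it are not available in the paper's framework --- and the case $\ker(A)\neq\{0\}$, which is exactly the case where you need the range side (for injective $A$ your kernel side already gives $T=0$), gives no way around this. Second, even for densely defined $A$, Theorem~\ref{dua.t.main} does not justify the identity $\Phi_A(f_n)'=\Phi_{A'}(f_n)$: that theorem identifies $\Phi_A(f_n)'$ with the value of the \emph{abstract} dual calculus, and only under the hypothesis that $f_n$ lies in its domain $\calF'$, i.e., that $f_n$ is anchored with respect to the dual representation $e\mapsto\Phi_A(e)'$. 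Unwinding this condition, one needs spaces of the form $\spann\bigcup_{e}\ran\Phi_A(e)$ ($e$ ranging over suitable regularizers of $f_n$) to be dense in $X$; this is neither automatic nor verified by you, and the boundedness of $\Phi_A(f_n)$ enters part c) of that theorem only \emph{after} such membership is known. (Identifying the abstract dual calculus with $\Phi_{A'}$ would require a further compatibility/uniqueness argument on top of that.) The paper circumvents all of this by a purely algebraic step: since $f_n\in\dom(\Phi_A)$ and $A$ is not injective, the anchor set $[f_n]_{\calE_e}$ must contain some $e_1$ with $e_1(0)\neq 0$, and a short manipulation then shows $(1+\bfz)^{-2}f_n\in\calE(\sector{\omega})$; Lemma~\ref{sectop.l.aux} then gives $\ran\bigl((1+A)^{-2}\Phi_A(f_n)\bigr)\subseteq\cls{\ran}(A)$, which suffices because the whole conclusion can be run with $(1+A)^{-2}DT$ in place of $DT$, and $(1+A)^{-2}DT=0$ forces $DT=0$. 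If you want to retain your stronger statement about $\ran\Phi_A(f_n)$ itself, combine the paper's step with the observation that $(1+A)^{-2}y\in\cls{\ran}(A)$ implies $y\in\cls{\ran}(A)$, since $y=w+2Aw+A^2w$ for $w:=(1+A)^{-2}y$ and $Aw,\,A^2w\in\ran(A)$.
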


\begin{proof}
For simplicity we write $\Phi$ in place of $\Phi_A$, and 
$\calE$ and $\calE_e$ in place of $\calE(\sector{\omega})$ and
$\calE_e(\sector{\omega})$, respectively. 
By passing to $f_n - f_n(0)\car$ we may suppose that 
$f_n(0)= 0$ for each $n\in \N$.

By hypothesis, there is a point-separating set $\calD \subseteq
\calA_A'$ such that
\[   D\Phi(f_n) \to DT \quad \text{strongly, for all $D\in \calD$}.
\]
We fix $D\in \calD$ for the time being.

Now, take  $e := \bfz (1+\bfz)^{-2}$
and observe that $ef_n \in \calE$
and $\Phi(ef_n) \to 0$ in operator norm by Lebesgue's theorem and
the very definition of $\Phi$ in \eqref{sec.eq.def}. 
On the other hand,
\[ D\Phi(ef_n) = D\Phi(e) \Phi(f_n)  = \Phi(e)D \Phi(f_n) \to 
\Phi(e)DT 
\]
strongly.  This yields
$A (1 + A)^{-2}DT = \Phi(e)DT = 0$, and hence
\beq\label{sectop.eq.aux1}   
\ran( (1+A)^{-2}DT) \subseteq \ker(A).
\eeq
If $\ker(A) = \{0\}$ then $DT=0$ and hence $T=0$ since $D$ was
arbitrary from $\calD$.  So suppose that 
$A$ is not injective and define
$e_0 := (1 + \bfz)^{-2}$.

We claim that 
$e_0f_n \in \calE$. To prove this, note
that, by hypothesis, $f_n$ is anchored
in $\calE_e$. Since $A$ is not injective,
$[f]_{\calE_e}$ must contain at least one
function $e_1$ with $e_1(0)\neq 0$. Write 
\[ e_1= e_2 + c\car + \frac{d}{1 + 
\bfz}  = e_2+ \frac{c+d}{1+ \bfz} + c\frac{\bfz}{1 + \bfz}
\quad \text{for certain $c,\:d \in \C$
and $e_2\in \calE$}.
\] 
Now multiply by $f_n$ and $(1 + \bfz)^{-1}$ to obtain
\[  (c+d)e_0f_n = \frac{e_1f_n}{1+ \bfz} - e_2\frac{f_n}{1+ \bfz}
- c \frac{\bfz}{(1+\bfz)^2} \in \calE.
\]
(Note that $e_1 f_n \in \calE_e$.) But $c+d = e_1(0) \neq 0$,
and hence $e_0f_n \in \calE$ as claimed.

Finally, apply Lemma \ref{sectop.l.aux} 
above with $e = e_0 f_n$ to see that 
\[ \ran( \Phi(e_0f_n)) \subseteq \cls{\ran}(A).
\]
As
\begin{align*}
  \Phi(e_0f_n)D  & = D \Phi(e_0 f_n) = D \Phi(e_0)\Phi(f_n)=
\Phi(e_0)D\Phi(f_n) 
\\ & = (1 +A)^{-2}D\Phi(f_n) \to 
(1 + A)^{-2} T
\end{align*}
strongly, it follows that 
\beq\label{sectop.eq.aux2} 
\ran( (1 + A)^{-2} DT) \subseteq \cls{\ran}(A).
\eeq
Taking \eqref{sectop.eq.aux1} and \eqref{sectop.eq.aux2} 
together yields
\[   \ran( (1 + A)^{-2} DT) \subseteq \ker(A) \cap \cls{\ran}(A) 
= \{0\},
\]
since $A$ is sectorial \cite[Prop.{ }2.1]{HaaseFC}. This means
that $(1+A)^{-2}DT=0$, and since
since $D\in \calD$ was arbitrary,  it follows that $T=0$ as desired.
\end{proof}

As an immediate consequence we obtain
the result already announced above.

\begin{cor}\label{sectop.c.main}
The sectorial calculus $\Phi_A$ on 
$\Ha^\infty(\sector{\omega}\cup\{0\}) \cap \bdd(\Phi_A,\dom(\Phi_A))$
is closable with respect to the joint convergence structure
\[ \text{\rm ( bp-convergence on $\sector{\omega} \cup \{0\}$ ,
$\tau_A^s$-convergence
within $\calA_A$ )}.
\]
\end{cor}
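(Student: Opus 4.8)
The plan is to observe that this corollary is exactly the abstract reformulation of Theorem~\ref{sectop.t.main} within the framework of Section~\ref{s.top}. Unwinding definition~\eqref{sec.eq.tau-closable}, closability of $\Phi_A\res{\calE}$ (where $\calE := \Ha^\infty(\sector{\omega}\cup\{0\}) \cap \bdd(\Phi_A,\dom(\Phi_A))$) with respect to the joint convergence structure $\tau = (\text{bp-convergence},\, \tau_A^s)$ means precisely: whenever $(f_n)_n$ is a sequence in $\calE$ with $f_n \to 0$ pointwise and boundedly on $\sector{\omega}\cup\{0\}$ and $\Phi_A(f_n) \stackrel{\tau_A^s}{\to} T$ for some $T\in\calA_A$, then $T=0$. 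This is, verbatim, the conclusion of Theorem~\ref{sectop.t.main}, so the entire argument consists in matching hypotheses.

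First I would confirm that the data genuinely instantiate the setup of Section~\ref{s.top}. Here $\calE' = \Ha^\infty(\sector{\omega}\cup\{0\})$ is a unital algebra, $\calE$ is its subalgebra of $\Phi_A$-bounded members (a subalgebra because both $\Ha^\infty(\sector{\omega}\cup\{0\})$ and $\bdd(\Phi_A,\dom(\Phi_A))$ are, the latter by Theorem~\ref{afc.t.pro-cal}.d), and $\calA = \calA_A = \{(1+A)^{-1}\}'$. The one containment deserving a word is $\Phi_A(\calE) \subseteq \calA_A$: this holds because any bounded $\Phi_A(f)$ commutes with every resolvent of $A$, hence with $(1+A)^{-1}$. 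Note also that the limit $T$ lands in $\calA_A$ automatically, since $\tau_A^s$ is defined on $\calA_A^\N \times \calA_A$, so no separate check is needed there.

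With these identifications the hypotheses of Theorem~\ref{sectop.t.main}---namely $f_n\in\Ha^\infty(\sector{\omega}\cup\{0\})$ with each $\Phi_A(f_n)$ defined and bounded, $f_n\to 0$ bp, and $\Phi_A(f_n)\stackrel{\tau_A^s}{\to} T\in\calA_A$---coincide exactly with the premises of~\eqref{sec.eq.tau-closable}, and the theorem's conclusion $T=0$ is exactly what closability demands. Consequently there is nothing further to prove. I expect no genuine obstacle in the corollary itself; all the real mathematical content sits in Theorem~\ref{sectop.t.main}, where $T=0$ is extracted from the squeeze $\ran((1+A)^{-2}DT)\subseteq \ker(A)\cap\cls{\ran}(A)=\{0\}$ together with point-separation of $\calD$. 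The corollary merely repackages that result in the language of convergence structures.
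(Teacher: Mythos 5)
Your proposal is correct and matches the paper exactly: the paper gives no separate argument for Corollary~\ref{sectop.c.main}, presenting it as an immediate consequence of Theorem~\ref{sectop.t.main}, and your unwinding of definition~\eqref{sec.eq.tau-closable} together with the hypothesis-matching (including the checks that $\calE$ is a subalgebra and $\Phi_A(\calE)\subseteq\calA_A$) is precisely the implicit content of that deduction.
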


Based on Corollary \ref{sectop.c.main}
we apply Theorem \ref{top.t.top-ext} 
and obtain  the {\emdf bp-extension}
$\Phi_A^\bp$ of the sectorial calculus $\Phi_A$ for $A$. 
Since the relevant function algebras are commutative,
there is no compatibility issue, cf. Corollary \ref{top.c.top-ext-comp}.

As bp-convergence is
weaker then uniform convergence, Corollary \ref{sectop.c.main}
implies in particular the result from \cite[Section 5]{Haa05c} 
that the sectorial calculus is closed with
respect to 
\[  \text{( uniform convergence on $\sector{\omega}\cup\{0\}$ , 
operator norm convergence ),} 
\]
which obviously is also a joint algebraic sequential convergence structure.
The respective topological extension 
(and also its canonical algebraic one) shall be called
the {\emdf uniform extension} of the sectorial calculus and 
denoted  by $\Phi_A^\uni$. 
Obviously, we have $\dom(\Phi_A^\uni) \subseteq \dom(\Phi^\bp_A)$ and 
$\Phi^\uni_A = \Phi^\bp_A$ on $\dom(\Phi_A^\uni)$.

\begin{rem}
The bp-extension is ``large'' in a sense, since bp-convergence
and $\tau_A^s$-convergence are relatively weak requirements.  
(Actually,  they are the weakest we can think of at the moment.) 
On the other hand, the uniform is quite ``small''. Whereas the
bp-extension is interesting in order to understand what a ``maximal'' 
calculus could be for a given operator, the uniform extension is
interesting in order to understand the ``minimal'' extension necessary
to cover a given function. 
\end{rem}


\section{Stieltjes Calculus and Hirsch Calculus}\label{s.hir}

The defect of the sectorial calculus  mentioned in the previous
section has, as a matter of fact, been observed 
by several other people working in the field. Of course, this has not
prevented people from working with operators
of the form \eqref{sectop.eq.Hirsch} or \eqref{sectop.eq.sgrp}. 
The former one has
actually been used  already by Hirsch in \cite{Hirsch1972}.
It was  extended algebraically by Martinez and Sanz in \cite{MartinezSanz1998,MartinezSanzTFPO}
under the name of ``Hirsch functional calculus''. 

Dungey in \cite{Dungey2009b} considers the operator $\psi(A)$
for
\[ \psi(z) = \int_0^1 z^\alpha \, \ud{\alpha} = \frac{z-1}{\log z}
\]
and remarks that $\psi(A)$ is defined within the Hirsch calculus
but not within the sectorial calculus.

Batty, Gomilko and Tomilov  in  \cite{BattyGomilkoTomilov2015} 
embed the Hirsch calculus (which is not
a calculus in our sense because the domain set is not an algebra)
into a larger calculus (in our sense) which is an algebraic 
extension of a calculus for the so-called  bounded Stieltjes algebra. 


\medskip

\subsection{The Stieltjes Calculus}\label{hir.s.sti}

In \cite{BattyGomilkoTomilov2015}, 
Batty, Gomilko and Tomilov define what they call
the {\em extended Stieltjes calculus} for a sectorial
operator. In this section, we introduce this calculus 
and show that it 
is contained in the uniform extension of the sectorial calculus.

According to \cite[Section 4]{BattyGomilkoTomilov2015}, 
 the {\emdf bounded Stieltjes
algebra} $\widetilde{\calS}_b$ consists of all functions
$f$ that have a representation
\beq\label{sectop.eq.stieltjes}
 f(z) = \int_{\R_+} \frac{\mu(\ud{s})}{(1 + sz)^m}
\eeq
for some $m\in \N_0$ and some $\mu \in \eM(\R_+)$. Each such $f$
is obviously holomorphic on $\sector{\upi}$ and bounded on each
smaller sector. It is less obvious, however, 
that $\widetilde{S}_b$ is a unital  algebra, a fact which 
is proved in \cite[Section 4.1]{BattyGomilkoTomilov2015}.

\begin{prop}\label{sectop.p.stieltjes}
Suppose $f$ is a bounded Stieltjes function with
representation \eqref{sectop.eq.stieltjes} and $A$ is a sectorial operator
on a Banach space $X$. 
Then 
\beq \label{sectop.aux.stieltjes2} 
\Phi^\uni_A(f) = \int_{\R_+} (1 + sA)^{-m}  \mu(\ud{s}),
\eeq
where $\Phi^\uni$ is the uniform extension of the sectorial
calculus for $A$.
\end{prop}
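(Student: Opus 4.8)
The plan is to realize $f$ as a uniform limit of elementary functions whose sectorial‑calculus images converge in operator norm to the asserted Bochner integral, so that the value of $\Phi^\uni_A(f)$ can be read off directly from the definition of the uniform extension.

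First I would fix $\omega\in(\omega_\sct(A),\upi)$ and record two uniform bounds. From the elementary estimate $\abs{1+w}\ge\cos(\omega/2)(1+\abs{w})$ for $w\in\cls{\sector{\omega}}$ one gets $\norm{(1+s\bfz)^{-m}}_{\infty,\sector{\omega}}\le\cos(\omega/2)^{-m}$ for all $s\ge0$; hence the representation \eqref{sectop.eq.stieltjes} defines $f\in\Ha^\infty(\sector{\omega}\cup\{0\})$ with $f(0)=\mu(\R_+)$ (dominated convergence). On the operator side, sectoriality yields $K_A:=\sup_{s\ge0}\norm{(1+sA)^{-1}}<\infty$ (for $s>0$ this equals $\abs{\lambda}\,\norm{R(\lambda,A)}$ with $\lambda=-1/s<0$), whence $\norm{(1+sA)^{-m}}\le K_A^m$. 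I would then check that each $g_s:=(1+s\bfz)^{-m}$ lies in the primary algebra $\calE:=\Ha^\infty(\sector{\omega}\cup\{0\})\cap\bdd(\Phi_A)$ with $\Phi_A(g_s)=(1+sA)^{-m}$: indeed $(1+s\bfz)^{-1}-(1+\bfz)^{-1}=\frac{(1-s)\bfz}{(1+s\bfz)(1+\bfz)}\in\calE(\sector{\omega})$, so $(1+s\bfz)^{-1}\in\calE_e(\sector{\omega})$ with image $(1+sA)^{-1}$, and multiplicativity gives the claim. Splitting off the atom $\mu(\{0\})\,\delta_0$ (which contributes the constant $\mu(\{0\})\car$, with $\Phi_A(\car)=\Id$), I may assume $\mu$ supported in $(0,\infty)$.

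Next I would build the approximants. For $\mu_n:=\mu\restrict_{[1/n,n]}$ set $f_n:=\int_{1/n}^n g_s\,\mu(\ud s)$ and $T_n:=\int_{1/n}^n(1+sA)^{-m}\,\mu(\ud s)$. Since $\norm{\mu-\mu_n}\to0$ and the integrands are uniformly bounded by $\cos(\omega/2)^{-m}$ resp.\ $K_A^m$, one has $f_n\to f$ uniformly on $\sector{\omega}\cup\{0\}$ and $T_n\to T:=\int_{\R_+}(1+sA)^{-m}\,\mu(\ud s)$ in operator norm. For fixed $n$, on the compact $[1/n,n]\subseteq(0,\infty)$ both maps $s\mapsto g_s$ (in $\norm{\cdot}_{\infty,\sector{\omega}}$) and $s\mapsto(1+sA)^{-m}$ (in operator norm) are Lipschitz, using $\abs{\partial_s g_s(z)}=m\abs{z}\abs{1+sz}^{-m-1}$ and $\norm{\partial_s(1+sA)^{-m}}=m\,\norm{A(1+sA)^{-1}}\,\norm{(1+sA)^{-m}}$ together with $\norm{A(1+sA)^{-1}}\le s^{-1}(1+K_A)$. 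Hence Riemann--Stieltjes sums $\sigma=\sum_k g_{s_k}\,\mu(\Delta_k)$ — finite combinations of elements of $\calE$ — satisfy $\norm{\sigma-f_n}_{\infty}\to0$ and $\Phi_A(\sigma)=\sum_k(1+s_kA)^{-m}\mu(\Delta_k)\to T_n$ in operator norm as the mesh shrinks.

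Finally I would diagonalize: for each $n$ choose a Riemann sum $e_n\in\calE$ for $f_n$ with $\norm{e_n-f_n}_\infty\le1/n$ and $\norm{\Phi_A(e_n)-T_n}\le1/n$; then $e_n\to f$ uniformly on $\sector{\omega}\cup\{0\}$ and $\Phi_A(e_n)\to T$ in operator norm, with $e_n\in\calE$. Since uniform convergence is stronger than bp‑convergence and operator‑norm convergence is stronger than $\tau_A^s$‑convergence, Corollary \ref{sectop.c.main} guarantees closability, so $\Phi^\uni_A$ is well defined and, by the very definition of the $\tau$‑extension (Theorem \ref{top.t.top-ext}), $f\in\dom(\Phi^\uni_A)$ with $\Phi^\uni_A(f)=T=\int_{\R_+}(1+sA)^{-m}\,\mu(\ud s)$, as claimed. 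The main obstacle is precisely this passage through a double limit: one must verify that the finite Stieltjes sums genuinely belong to the primary algebra $\calE$ (the $g_s$‑computation above) and then merge the truncation limit and the Riemann‑sum limit into a single sequence drawn from $\calE$, so that the definition of the uniform extension applies directly; the attendant total‑variation and Lipschitz estimates are routine once the uniform bounds $\cos(\omega/2)^{-m}$ and $K_A^m$ are in place.
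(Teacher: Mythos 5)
Your proof is correct, but its technical core differs from the paper's. The paper makes the same two reductions (subtracting $\mu(\{0\})$, truncating $\mu$ to $[\frac1n,n]$), but then places the truncation $f_n$ \emph{itself} in the elementary algebra: writing $g_n := f_n - a_n(1+\bfz)^{-1}$ with $a_n = \mu([\frac1n,n])$, the integrand $(1+s\bfz)^{-m} - (1+\bfz)^{-1}$ lies in $\calE(\sector{\omega})$, hence $g_n \in \calE(\sector{\omega})$ and $f_n \in \calE_e(\sector{\omega})$, and the identity $\Phi_A(f_n) = \int_{[\frac1n,n]}(1+sA)^{-m}\,\mu(\ud{s})$ follows by applying Fubini's theorem to the defining contour integral \eqref{sec.eq.def}. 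So in the paper the approximating sequence for the uniform extension is $(f_n)$ itself and only the truncation limit is taken. You never show $f_n$ lies in the primary algebra; instead you approximate it from within that algebra by finite Riemann--Stieltjes sums $\sum_k (1+s_k\bfz)^{-m}\mu(\Delta_k)$, justified by Lipschitz estimates in $s$ (in sup-norm and in operator norm on $[\frac1n,n]$), and then merge the truncation and Riemann-sum limits by a diagonal argument. Both routes are legitimate and rely on the same elementary facts ($\Phi_A((1+s\bfz)^{-1}) = (1+sA)^{-1}$, uniform bounds on function and operator side). What each buys: the Fubini argument is shorter and identifies each truncation exactly as an element of $\calE_e$ with the correct image; your Riemann-sum argument is more self-contained, avoiding the interchange of the $\mu$-integral with the vector-valued contour integral altogether, at the cost of one extra layer of approximation and the quantitative bounds $\cos(\omega/2)^{-m}$ and $K_A^m$. (Minor quibble: your displayed equality $\norm{\partial_s(1+sA)^{-m}} = m\,\norm{A(1+sA)^{-1}}\,\norm{(1+sA)^{-m}}$ should be the inequality $\le$ coming from submultiplicativity, which is all you use.)
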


\begin{proof}
By subtracting $\mu\{0\}$ we may suppose that 
$\mu$ is supported on $(0, \infty)$. Also, we may
suppose that $m \ge 1$. 
Define
\[ f_n(z)  := \int_{[\frac{1}{n}, n]} \frac{\mu(\ud{s})}{(1+ sz)^m}
\quad
\text{and} \quad
a_n := \int_{[\frac{1}{n}, n]} \car \ud{\mu}.
\]
Then
\[ g_n(z) := f_n(z) - \frac{a_n}{1+z}
= \int_{[\frac{1}{n}, n]} \frac{1}{(1+ sz)^m} - \frac{1}{1 + z}
\, \mu(\ud{s}).
\]
The function under the integral is contained in $\calE$.
A moment's reflection reveals that $g_n \in \calE$ also, 
and that one can apply Fubini's theorem to compute $\Phi_A(g_n)$. 
This yields
\begin{align*}
 \Phi_A(g_n) & = \int_{[\frac{1}{n}, n]} 
\Phi_A\Bigl(\frac{1}{(1+ sz)^m} - \frac{1}{1 + z}\Bigr)
 \, \mu(\ud{s})
\\ & =  
\int_{[\frac{1}{n}, n]} 
(1+ sA)^{-m} - (1 + A)^{-1}  \, \mu(\ud{s})
\\ & = 
\int_{[\frac{1}{n}, n]} 
(1+ sA)^{-m} \, \mu(\ud{s}) - a_n (1 + A)^{-1},  
\end{align*}
and hence $f_n \in \calE_e$ with 
\[ \Phi_A(f_n)= \int_{[\frac{1}{n}, n]} 
(1+ sA)^{-m} \, \mu(\ud{s})
\to \int_{\R_+}
(1+ sA)^{-m} \, \mu(\ud{s})
\]
in operator norm. Since $f_n \to f$ uniformly on
$\sector{\omega}\cup\{0\}$ 
for
each $\omega < \upi$, the proof is complete.
\end{proof}

Instead of operating with a topological extension,  
the authors of \cite{BattyGomilkoTomilov2015} 
use \eqref{sectop.aux.stieltjes2} as a definition. 
They then have to show independence of the representation
\cite[Prop.{ }4.3]{BattyGomilkoTomilov2015}, 
compatibility with the holomorphic calculus
\cite[Lemma 4.4]{BattyGomilkoTomilov2015},  
and the algebra homomorphism property \cite[Propositions 4.5 and
4.6]{BattyGomilkoTomilov2015}. In our approach, all these facts follow
from   Proposition \ref{sectop.p.stieltjes}
and  general theory.\footnote{The reader might object that the
  ``general theory'' presented in this article is quite involved.
We agree, but stress the fact that only a commutative version of this
theory, which is relatively simple, is needed here.}

\medskip

\subsection{The Hirsch Calculus}\label{hir.s.hir}

Developing further the approach  of Hirsch \cite{Hirsch1972}, Martinez
and Sanz in \cite{MartinezSanz1998} and \cite{MartinezSanzTFPO} define
the class $\calT$ 
of all functions $f$ that have a representation
\[ f(z) = a + \int_{\R_+} \frac{z}{1 + zt} \, \nu(\ud{t}),
\]
where $a\in \C$ and $\nu$ is a Radon measure on $\R_+$
satisfying
\[ \int_{\R_+} \frac{ \abs{\nu}(\ud{t})}{1+t} < \infty.
\]
One can easily see that $\calT$ is contained in the algebraically extended
Stieltjes algebra. Just write
\[ f(z) = a + z \int_{[0,1]} \frac{1}{1 + tz} \, \nu(\ud{t})
+ \int_{(1, \infty)} \frac{z}{1 + zt} \, \nu(\ud{t})
:= a + z g(z) + h(z)
\] 
and note that $g$ and $h$ are bounded Stieltjes functions. 
The latter  is obvious for $g$; and for $h$ it follows from the
identity
\beq\label{sectop.eq.Hirsch-aux}
    h(z) = \int_{(1, \infty)} \frac{z}{1 + zt} \, \nu(\ud{t})
=  \int_{(1, \infty)} \frac{\nu(\ud{t})}{t} - 
\int_{(1, \infty)} \frac{1}{1 +  tz}\, \frac{\nu(\ud{t})}{t}.
\eeq
We obtain
\begin{align*}  \Phi_A^\uni(f) & = a + A \Phi_A^\uni(g) + \Phi_A^\uni(h)
\\ & = a + A \int_{[0,1]} (1 + tA)^{-1} \, \nu(\ud{t})
+ \int_{(1, \infty)} A (1+tA)^{-1} \, \nu(\ud{t})
\end{align*}
by a short computation using \eqref{sectop.eq.Hirsch-aux}. 
This  coincides
with how $f(A)$ is defined in  \cite[Def.{ }4.2.1]{MartinezSanzTFPO}. 
Hence, the Hirsch calculus
(which is not a functional calculus in our terms since $\calT$ is not
an algebra) is 
contained in the uniform extension of the sectorial calculus.

\medskip

\subsection{Integrals  involving  Holomorphic Semigroups}\label{hir.s.hol}

If $A$ is sectorial of angle $\omega_{\sct}(A) < \frac{\upi}{2}$, then 
$-A$ generates a holomorphic semigroup
\[ T_A(\lambda ) := \ue^{-\lambda A} := (\ue^{-\lambda \bfz})(A)
\qquad (\lambda \in \sector{\omega_\sct(A) - \frac{\upi}{2}}),
\]
see \cite[Section 3.4]{HaaseFC}. If one has $\alpha = 0$ or $\re
\alpha > 0$, and one restricts $\lambda$ to a
smaller
sector, the function $\lambda \mapsto (\lambda A)^\alpha T_A(\lambda)$
becomes  uniformly bounded. Hence, one can integrate with respect to a
bounded measure.   The following result shows that also these
operators are covered by the uniform extension of the sectorial
calculus.

\begin{prop}\label{hir.p.hol}
Let $0 \le \vphi <  \frac{\upi}{2}$, let $\mu$ be a complex Borel measure
on $\cls{\sector{\vphi}}$ and let $\alpha = 0$ or $\re \alpha > 0$.
 Then the function
\[ f(z) := \int_{\cls{\sector{\vphi}}} (\lambda z)^\alpha 
\ue^{-\lambda z}\, \mu(\ud{\lambda})
\]
is holomorphic on $\sector{\frac{\upi}{2} - \vphi}$ and uniformly
bounded on each smaller sector. If $A$ is any sectorial operator
on a Banach space with $\omega_{\sct}(A) + \vphi < \frac{\upi}{2}$, 
then  
\[ \Phi_A^\uni(f) =  \int_{\cls{\sector{\vphi}}} (\lambda A)^\alpha 
\ue^{-\lambda A}\, \mu(\ud{\lambda}),
\] 
where $\Phi^\uni$ is the uniform extension of the sectorial
calculus for $A$. 
\end{prop}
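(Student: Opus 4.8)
The plan is to follow the template of Proposition~\ref{sectop.p.stieltjes}: approximate $f$ by truncating the measure $\mu$, recognize each truncation as a genuine element of $\calE(\sector{\omega})$ or $\calE_e(\sector{\omega})$ whose image under $\Phi_A$ is computed from \eqref{sec.eq.def} by Fubini, and then pass to the limit inside the uniform extension using Corollary~\ref{sectop.c.main} and Theorem~\ref{top.t.top-ext}.

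First I would record the analytic facts about $f$. Fix angles with $\omega_\sct(A) < \delta < \omega < \frac{\upi}{2} - \vphi$. For $z\in\sector{\omega}$ and $\lambda\in\cls{\sector{\vphi}}$ one has $\abs{\arg(\lambda z)} \le \omega + \vphi < \frac{\upi}{2}$, so $\lambda z$ stays in a fixed proper subsector of the right half-plane, on which $w\mapsto w^\alpha\ue^{-w}$ is bounded, say by $C_\omega$ (this uses $\re\alpha>0$ or $\alpha=0$). Hence $\abs{(\lambda z)^\alpha\ue^{-\lambda z}}\le C_\omega$ uniformly, $f$ is bounded on $\sector{\omega}$ by $C_\omega\abs{\mu}(\cls{\sector{\vphi}})$, and holomorphy follows by dominated convergence; the same argument gives the existence of $f(0)=\lim_{z\searrow 0}f(z)$, which is $0$ if $\re\alpha>0$ and $\mu(\cls{\sector{\vphi}})$ if $\alpha=0$. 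Thus $f\in\Ha^\infty(\sector{\omega}\cup\{0\})$.

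Next I construct the truncations. When $\alpha=0$ I first split off the atom at the origin: as $\Phi_A^\uni(\car)=\Id$ and $\Phi_A^\uni$ is additive against bounded summands (Theorem~\ref{afc.t.pro-cal}), the constant $\mu(\{0\})$ contributes $\mu(\{0\})\Id$ to both sides, so I may assume $\mu(\{0\})=0$. Put $K_n := \{\lambda\in\cls{\sector{\vphi}}:\frac1n\le\abs{\lambda}\le n\}$ and $f_n(z):=\int_{K_n}(\lambda z)^\alpha\ue^{-\lambda z}\,\mu(\ud\lambda)$. If $\re\alpha>0$, each $g_\lambda:=(\lambda\bfz)^\alpha\ue^{-\lambda\bfz}$ lies in $\calE(\sector{\omega})$; moreover the boundary integral $\int_{\rand\sector{\delta}}\abs{g_\lambda(z)}\frac{\abs{\ud z}}{\abs z}$ is, by the scale invariance of $\frac{\abs{\ud z}}{\abs z}$, independent of $\abs\lambda$ and, as $\lambda$ ranges over $\cls{\sector{\vphi}}$, uniformly bounded---the contour $\tfrac{\lambda}{\abs\lambda}\rand\sector{\delta}$ staying in the open right half-plane because $\delta+\vphi<\frac{\upi}{2}$---which legitimizes Fubini and yields $f_n\in\calE(\sector{\omega})$ with $\Phi_A(f_n)=\int_{K_n}(\lambda A)^\alpha\ue^{-\lambda A}\,\mu(\ud\lambda)$. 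If $\alpha=0$, the integrand is not in $\calE$, so, exactly as in Proposition~\ref{sectop.p.stieltjes}, I subtract $(1+\bfz)^{-1}$: each $\ue^{-\lambda\bfz}-(1+\bfz)^{-1}$ lies in $\calE(\sector{\omega})$, and because $K_n$ keeps $\abs{\lambda}$ bounded away from $0$ the corresponding boundary integrals stay bounded for $\lambda\in K_n$, so $f_n=\int_{K_n}\bigl(\ue^{-\lambda\bfz}-(1+\bfz)^{-1}\bigr)\mu(\ud\lambda)+\mu(K_n)(1+\bfz)^{-1}\in\calE_e(\sector{\omega})$ and Fubini again gives $\Phi_A(f_n)=\int_{K_n}\ue^{-\lambda A}\,\mu(\ud\lambda)$. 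In both cases $f_n\in\Ha^\infty(\sector{\omega}\cup\{0\})\cap\bdd(\Phi_A)$.

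Finally I pass to the limit. Uniform convergence $f_n\to f$ on $\sector{\omega}\cup\{0\}$ is immediate from $\norm{f-f_n}_{\infty,\sector{\omega}}\le C_\omega\abs{\mu}\bigl(\{\lambda:0<\abs\lambda<\tfrac1n\text{ or }\abs\lambda>n\}\bigr)\to 0$ (the integrand vanishing at $\lambda=0$ when $\re\alpha>0$, and the reduction $\mu(\{0\})=0$ when $\alpha=0$, make the origin harmless). For the operator side I use the uniform bound $M:=\sup_{\lambda\in\cls{\sector{\vphi}}}\norm{(\lambda A)^\alpha\ue^{-\lambda A}}<\infty$, valid since $\omega_\sct(A)+\vphi<\frac{\upi}{2}$ (this is the boundedness of the holomorphic semigroup and its $\alpha$-twists on the subsector $\cls{\sector{\vphi}}$, as recalled before the statement); it gives $\norm{T-\Phi_A(f_n)}\le M\abs{\mu}(\cls{\sector{\vphi}}\setminus K_n)\to 0$, where $T:=\int_{\cls{\sector{\vphi}}}(\lambda A)^\alpha\ue^{-\lambda A}\,\mu(\ud\lambda)$. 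Since uniform convergence and operator-norm convergence are the two components of the convergence structure defining $\Phi_A^\uni$, Corollary~\ref{sectop.c.main} and Theorem~\ref{top.t.top-ext} yield $f\in\dom(\Phi_A^\uni)$ and $\Phi_A^\uni(f)=T$. I expect the main work to be the case $\alpha=0$: there the integrand leaves $\calE$ and must be regularized by $(1+\bfz)^{-1}$, and the truncation away from $\lambda=0$ is exactly what keeps the regularized integrand in $\calE$ and makes the Fubini interchange licit.
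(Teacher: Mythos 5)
Your proposal is correct and follows essentially the same route as the paper's own (sketched) proof: reduce to $\mu(\{0\})=0$, truncate the measure to $\frac1n\le\abs{\lambda}\le n$, apply Fubini directly when $\re\alpha>0$ and after subtracting $(1+\bfz)^{-1}$ when $\alpha=0$, and pass to the limit in the joint (uniform, operator-norm) convergence structure defining $\Phi_A^\uni$. You have simply filled in the details the paper leaves to the reader, and they check out.
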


We point out that Proposition \ref{hir.p.hol} applies in particular
to the case that  $\vphi = 0$ and $\cls{\sector{\vphi}} = \R_+$ is just the
real axis.

\begin{proof}
The proof follows the line of the proof of Proposition
\ref{sectop.p.stieltjes} 
and
we only sketch it. First one subtracts a constant to reduce to the
case that $\mu$ has no mass at  $\{0\}$.
Then one uses the approximation
\[ \int_{\lambda \in \cls{\sector{\vphi}}, \frac{1}{n}\le
  \abs{\lambda} \le n} \dots\, \mu(\ud{\lambda})
\quad\to\quad   \int_{\cls{\sector{\vphi}}} \dots\, \mu(\ud{\lambda}) 
\qquad (n \to \infty)
\]
first for scalars and then for operators. This reduces the
claim to establishing the identity
\[ \Phi_A\Bigl(\int_{\lambda \in \cls{\sector{\vphi}}, \frac{1}{n}\le
  \abs{\lambda} \le n} (\lambda \bfz)^\alpha \ue^{-\lambda
  \bfz}\mu(\ud{\lambda})\Bigr)
 = 
\int_{\lambda \in \cls{\sector{\vphi}}, \frac{1}{n}\le
  \abs{\lambda} \le n} (\lambda A)^\alpha \ue^{-\lambda A}\,
\mu(\ud{\lambda}).
\]
If $\re \alpha > 0$ then this is a simple application of Fubini's
theorem. If $\alpha = 0$ then one has to write
\[ \ue^{-\lambda \bfz} =  \frac{1}{1 + \bfz} + \Bigl( \ue^{-\lambda
  \bfz}
- \frac{1}{1+\bfz}\Bigr)
\]
and use Fubini for the second summand.
\end{proof}

\section{Semigroup and Group Generators}\label{s.sgr}

We define a {\emdf bounded semigroup}
to be uniformly bounded mapping  $T: \R_+ \to \BL(X)$ 
which is  strongly continuous on $(0, \infty)$ and satisfies
 the semigroup laws
\[ T(0)= \Id, \qquad  T(s+t) = T(s) T(t) \qquad (t,s > 0).
\]
(This has been called a {\em degenerate semigroup} in
\cite{HaaseFC}.) For $\mu \in \eM(\R_+)$ one can define
\[ \Psi_T(\mu) := \int_{\R_+} T(s) \, \mu(\ud{s}) \in \BL(X)
\]
as a strong integral. The mapping
\[ \Psi_T : \eM(\R_+) \to \BL(X)
\]
is an algebra homomorphism with respect to the convolution product. 
There is a unique linear relation $B$ on $X$, called the
{\emdf generator} of $T$,  such that 
\[    (\lambda - B)^{-1} =  \int_0^\infty \ue^{-\lambda t}T(t)\,
\ud{t}
\]
for one/all $\lambda \in \C$ with $\re \lambda > 0$ 
\cite[Appendix A.8]{HaaseFC}. 

Because of $\Psi_T(\delta_0) = \Id$, the representation $\Psi_T$ is not
degenerate.  However, its restriction to $\eM(0, \infty)$ might be. 
In fact, this is the case if and only  if the common kernel
$\bigcap_{t> 0} \ker(T(t))$ is not trivial, if and only
if $B$ is not operator.  From now one, 
we confine ourselves to the non-degenerate case, i.e., 
we suppose that $B$ is an operator. Instead of at $B$ we shall be
looking at
\[ A := -B
\]
it the following.

\medskip
Note that the Laplace transform 
\[  \Lap: \eM(\R_+) \to \Cb(\cls{\C_+}) \qquad \Lap \mu(z) :=
\int_{\R_+} \ue^{-zs}\,\mu(\ud{s}) \qquad (\re z \ge 0).
\]
is injective. Here, $\C_+ := \{ z\in \C \suchthat \re z > 0\} = 
\sector{\frac{\upi}{2}}$. Its image is the {\emdf Hille--Phillips algebra}
\[  \calL\calM(\C_+) := \{ \Lap \mu \suchthat \mu \in \eM(\R_+)\},
\]
a unital algebra under pointwise multiplication. The mapping
\[ \Phi_T : \calL\calM(\C_+) \to \BL(X), \qquad \Phi_T(f) =
\Psi_T(\Lap^{-1} f)
\]
is called the {\emdf Hille--Phillips calculus} (HP-calculus, for
short)
for $A$. One has
\[   \Phi_T((\lambda + \bfz)^{-1}) =  (\lambda +A)^{-1}
\]
for all $\lambda \in \C_+$.  Finally, we  extend $\Phi_T$ algebraically within the field
$\Mer(\C_+)$ of meromorphic functions on $\C_+$ and call this the
{\emdf extended Hille--Phillips calculus}.

\medskip

\subsection{The Complex Inversion Formula}\label{sgr.s.coi}

The semigroup can be reconstructed from its generator by the so-called
complex inversion formula. This is a standard fact from 
semigroup theory in the case that $T(t)$ is strongly continuous at $t=0$, i.e., 
if $A$ is densely defined. However, we
do not want to make this assumption here, so we need to
digress a little on that topic.

\begin{prop}[Complex inversion formula]\label{sgr.p.coi}
Let $-A$ be the generator of a bounded semigroup $T= (T(t))_{t> 0}$. 
Then the mapping
\[  \R_+ \to \BL(X)\qquad  t \mapsto T(t)(1 +A)^{-1}
\]
is Lipschitz-continuous in operator norm. Moreover, for each $\omega <
0$ 
\[ T(t)(1+A)^{-2} = \frac{1}{2\upi \ui} \int_{\omega + \ui \R} 
\frac{\ue^{-tz}}{(1+z)^2} \, R(z, A)\, \ud{z} \qquad (t\ge 0)
\]
where the integration contour is directed top down from   
from $\omega +\ui\infty$  to $\omega - \ui \infty$. 
\end{prop}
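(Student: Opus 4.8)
The plan is to establish the two assertions separately: the Lipschitz bound by a direct telescoping estimate, and the inversion formula by comparing Laplace transforms and appealing to the uniqueness theorem for Laplace transforms.

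First I would prove the Lipschitz continuity. Taking $\lambda=1$ in the defining relation of the generator (with $B=-A$) gives $(1+A)^{-1}=\int_0^\infty \ue^{-s}T(s)\,\ud s$, and the semigroup law yields, for $h\ge 0$,
\[ T(t+h)(1+A)^{-1}-T(t)(1+A)^{-1}=T(t)\bigl[T(h)-\Id\bigr](1+A)^{-1}. \]
Writing out $\bigl[T(h)-\Id\bigr](1+A)^{-1}=(\ue^{h}-1)\int_h^\infty \ue^{-s}T(s)\,\ud s-\int_0^h \ue^{-s}T(s)\,\ud s$ and setting $M:=\sup_{s\ge 0}\|T(s)\|$, each term is bounded in norm by $M(1-\ue^{-h})\le Mh$. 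Hence $\|[T(h)-\Id](1+A)^{-1}\|\le 2Mh$, so $\|T(t+h)(1+A)^{-1}-T(t)(1+A)^{-1}\|\le 2M^2h$, which is the asserted Lipschitz continuity with constant $2M^2$.

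For the inversion formula, write $S(t)$ for the right-hand side. The representation $R(z,A)=-\int_0^\infty \ue^{zs}T(s)\,\ud s$ (valid for $\re z<0$, hence on the contour) gives the uniform bound $\|R(z,A)\|\le M/|\omega|$ on $\{\re z\le\omega\}$, so $S(t)$ is an absolutely convergent, continuous operator-valued function; the same holds for $T(t)(1+A)^{-2}=[T(t)(1+A)^{-1}](1+A)^{-1}$ by the first part. Both have Laplace transforms for $\re\lambda>|\omega|$, so by uniqueness of Laplace transforms it suffices to show these coincide. On one side, $\int_0^\infty \ue^{-\lambda t}T(t)(1+A)^{-2}\,\ud t=(\lambda+A)^{-1}(1+A)^{-2}$ for $\re\lambda>0$. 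On the other, for $\re\lambda>|\omega|$ the regularizing factor $(1+z)^{-2}$ together with the exponential decay in $t$ makes the relevant double integral absolutely convergent, so Fubini reduces $\int_0^\infty \ue^{-\lambda t}S(t)\,\ud t$ to the resolvent contour integral
\[ \frac{1}{2\pi\ui}\int_{\omega+\ui\R}\frac{R(z,A)}{(\lambda+z)(1+z)^2}\,\ud z, \]
taken top down.

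The decisive step is the evaluation of this integral. Since the integrand now carries no exponential it decays like $|z|^{-3}$, while $\|R(z,A)\|$ stays bounded on $\{\re z\le\omega\}$; so I would close the contour by a large left semicircle, whose contribution vanishes, and compute by residues at the enclosed poles $z=-\lambda$ and $z=-1$. Using $R(-\lambda,A)=-(\lambda+A)^{-1}$, $R(-1,A)=-(1+A)^{-1}$ and $R'(z,A)=-R(z,A)^2$ for the double pole, and then applying the resolvent identity $(\lambda+A)^{-1}(1+A)^{-1}=(1-\lambda)^{-1}\bigl[(\lambda+A)^{-1}-(1+A)^{-1}\bigr]$ twice, the residue sum collapses to exactly $(\lambda+A)^{-1}(1+A)^{-2}$; the two Laplace transforms agree, and uniqueness yields $S(t)=T(t)(1+A)^{-2}$. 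I expect the main obstacle to be precisely this bookkeeping: the top-down orientation flips the sign relative to the usual Bromwich integral, the semicircle estimate must be uniform in the radius, and the double residue at $z=-1$ is captured only when the contour lies to the right of $-1$, so that the computation is cleanest (and the stated identity is obtained) for $\omega$ taken close to $0$.
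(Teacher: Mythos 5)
Your argument is correct on the range of $\omega$ where the stated formula is actually true, and it takes a genuinely different route from the paper's in both halves. For the Lipschitz estimate, the paper pushes the scalar identity $\ue^{-t\bfz}-\ue^{-s\bfz}=-\bfz\int_s^t\ue^{-r\bfz}\,\ud r$ through the Hille--Phillips calculus to obtain $T(t)-T(s)=-A\int_s^t T(r)\,\ud r$ and then multiplies by $(1+A)^{-1}$; your telescoping estimate based on $(1+A)^{-1}=\int_0^\infty\ue^{-s}T(s)\,\ud s$ reaches the same conclusion by direct computation. For the inversion formula, both arguments end with uniqueness of Laplace transforms, but the decisive middle step differs: the paper identifies $S(t)$ with $\Psi\bigl(\ue^{-t\bfz}(1+\bfz)^{-2}\bigr)=\ue^{-tA}(1+A)^{-2}$ by invoking the half-plane functional calculus for operators of strong right half-plane type $0$ (Batty--Haase--Mubeen) and quotes a lemma of that paper for $\int_0^\infty\ue^{-\lambda t}\ue^{-tA}(1+A)^{-2}\,\ud t=(\lambda+A)^{-1}(1+A)^{-2}$, whereas you compute the Laplace transform of $S(t)$ by Fubini and evaluate the resulting resolvent integral by closing the contour to the left, where $\norm{R(z,A)}\le M/\abs{\omega}$ and the rational factor decays like $\abs{z}^{-3}$. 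Your bookkeeping is right: with the top-down orientation the line integral equals \emph{minus} the sum of the residues enclosed on the left, and two applications of the resolvent identity collapse that sum to $-(\lambda+A)^{-1}(1+A)^{-2}$, so the two transforms agree for $\re\lambda>\abs{\omega}$ and uniqueness applies. What your route buys is self-containedness (no external functional calculus machinery); what the paper's buys is brevity and the conceptual point that the formula is essentially the definition of the half-plane calculus read backwards.

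Your closing caveat deserves to be stated more forcefully: the restriction to contours lying to the right of $z=-1$ is not a matter of convenience but of truth. As printed (``for each $\omega<0$''), the proposition is false for $\omega<-1$: take $X=\C$, $A=0$, $T\equiv\Id$ and $t=0$; then $T(0)(1+A)^{-2}=1$, while the contour integral equals the sum of the residues of $1/\bigl(z(1+z)^2\bigr)$ lying to the right of the line, namely $1+(-1)=0$. The paper's proof carries the same implicit restriction, since the half-plane calculus defines $\Psi\bigl(\ue^{-t\bfz}(1+\bfz)^{-2}\bigr)$ by a contour integral taken inside the domain of holomorphy of this function, i.e.\ with $-1<\omega<0$. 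So your proof, restricted to $-1<\omega<0$ as you indicate, proves exactly what is provable; the hypothesis in the statement should be corrected accordingly, and this is a defect of the paper's statement rather than a gap in your argument.
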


\begin{proof}
The HP-calculus turns the scalar identity
\[  \ue^{-tz\bfz} - \ue^{-s\bfz} = -\bfz \int_s^t \ue^{-r\bfz}\,
\ud{r}
\qquad (s,t\in \R_+)
\]
 into the operator identity
\[  T(t) - T(s) = -A \int_s^t T(r)\, \ud{r}.
\]
Multiplying with $(1+A)^{-1}$ from the right and estimating
yields
\[ 
 \norm{T(t)(1+A)^{-1} - T(s)(1+A)^{-1}}
 \le  M \norm{A(1 +A)^{-1}} \abs{s-t} \qquad (s,t\in \R_+).
\]
For the second claim we note first the estimate
\beq\label{sgr.eq.sthp}   \norm{(\lambda + A)^{-1}} \le \frac{M}{\re \lambda}\qquad 
(\re \lambda > 0),
\eeq
where $M := \sup_{t > 0} \norm{T(t)}$. Consequently,
$A$ is an operator of 
strong right half-plane type  $0$, and hence admits a functional
calculus $\Psi$, say, on half planes as in \cite{BatHaaMub2013}. 
Writing $\ue^{-tA} := \Psi(\ue^{t\bfz})$ one obtains
\[ S(t) := \frac{1}{2\upi \ui} \int_{\omega + \ui \R} 
\frac{\ue^{-tz}}{(1+z)^2} \, R(z, A)\, \ud{z} 
= \Psi( \frac{\ue^{-t\bfz}}{(1+\bfz)^2} ) = \ue^{-tA}(1+A)^{-2}
\]
for $t\ge 0$ by definition of $\Psi$ and usual functional calculus rules. Taking Laplace
transforms,
by \cite[Lemma 2.4]{BatHaaMub2013} we obtain
\beq\label{sgr.eq.coi-aux}
  \int_0^\infty \ue^{-\lambda t} S(t)\, \ud{t}
=   \int_0^\infty \ue^{-\lambda t} \ue^{-tA}(1+A)^{-2}\, \ud{t}
= (\lambda + A)^{-1}(1+A)^{-2}
\eeq
whenever $\re \lambda > -\omega$. Since $\omega$ can be chosen arbitrarily
close to $0$, the identity \eqref{sgr.eq.coi-aux} 
actually holds for all $\re \lambda > 0$. 
Since the Laplace transform is injective, it follows that 
\[    S(t) = T(t)(1+A)^{-2} \qquad (t\ge 0)
\]
as claimed.
\end{proof}

As a consequence we obtain that the commutant of the semigroup and
the commutant of its generator coincide.

\begin{cor}\label{sgr.c.commutator}
For a bounded operator $S\in \BL(X)$ the following assertions are
equivalent:
\begin{aufzii}
\item $S$ commutes with $(1+A)^{-1}$
\item $S$ commutes with each $T(t)$, $t> 0$. 
\end{aufzii}
\end{cor}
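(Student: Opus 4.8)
The plan is to prove the two implications separately, with (ii)$\then$(i) being routine and (i)$\then$(ii) the substantive direction that feeds on the complex inversion formula just established. For (ii)$\then$(i), I would observe that $(1+A)^{-1}$ is itself a value of $\Psi_T$: taking the measure $\mu(\ud{s}) = \ue^{-s}\,\ud{s}$ gives $\Psi_T(\mu) = \int_0^\infty \ue^{-s}T(s)\,\ud{s} = (1+A)^{-1}$ by the defining resolvent relation at $\lambda = 1$. An operator $S$ commuting with every $T(s)$ therefore commutes with this strong integral, hence with $(1+A)^{-1}$.

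For (i)$\then$(ii), I would first upgrade the hypothesis. Commuting with the single operator $(1+A)^{-1}$ is the same as commuting with the entire resolvent family, i.e.\ the equality $\{(1+A)^{-1}\}' = \{R(z,A)\suchthat z\in\resolv(A)\}'$ recorded in the definition of $\calA_A$ (alternatively one sees this directly from the resolvent identity). In particular $S$ commutes with $(1+A)^{-2}$. Next I would invoke Proposition \ref{sgr.p.coi}: for $\omega < 0$,
\[ T(t)(1+A)^{-2} = \frac{1}{2\upi\ui}\int_{\omega+\ui\R}\frac{\ue^{-tz}}{(1+z)^2}\,R(z,A)\,\ud{z}. \]
Since $S$ is bounded and commutes with each $R(z,A)$ on the contour, it passes through this operator-norm convergent integral, yielding $S\,T(t)(1+A)^{-2} = T(t)(1+A)^{-2}\,S$.

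The final step, and the one requiring care, is to cancel the regulariser $(1+A)^{-2}$. The naive route --- deducing $\bigl(ST(t)-T(t)S\bigr)(1+A)^{-2} = 0$ and then using density of $\ran((1+A)^{-2})$ --- is \emph{not} available here, because $A$ is not assumed densely defined (the semigroup need not be strongly continuous at $0$), so $\dom(A^2) = \ran((1+A)^{-2})$ may fail to be dense. This is the main obstacle. I would circumvent it by multiplying on the \emph{other} side and exploiting injectivity instead of density. Setting $C := ST(t) - T(t)S$ and using that $T(t)$ commutes with $(1+A)^{-2}$ together with the two commutation facts above, one computes $(1+A)^{-2}C = S\,T(t)(1+A)^{-2} - T(t)(1+A)^{-2}S = 0$. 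Since $(1+A)^{-1}$ is injective (it is the inverse of $1+A$), so is $(1+A)^{-2}$, and this forces $C = 0$, i.e.\ $S$ commutes with $T(t)$. As $t\ge 0$ is arbitrary, assertion (ii) follows.
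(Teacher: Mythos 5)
Your proof is correct and takes essentially the same route as the paper's: (ii)$\then$(i) via the Laplace representation of the resolvent, and (i)$\then$(ii) by upgrading the hypothesis to commutation with the whole resolvent family, pushing $S$ through the norm-convergent complex inversion integral of Proposition \ref{sgr.p.coi}, and then cancelling $(1+A)^{-2}$ by injectivity rather than density. Your explicit remark that the density argument is unavailable (since $A$ need not be densely defined) is exactly the point the paper's use of injectivity silently addresses.
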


\begin{proof}
The implication (ii)$\dann$(i) is trivial. Suppose that (i) holds.
Then $S$ commutes with $R(\lambda,A)$ for each $\lambda \in
\resolv(A)$ \cite[Prop.{ }A.2.6]{HaaseFC}. 
By the complex inversion formula, $S$ commutes
with $T(t) (1+A)^{-2} = (1+A)^{-2}T(t)$. It follows that 
\[   (1+A)^{-2} ST(t) = S (1+A)^{-2}T(t) = 
(1+A)^{-2} T(t) S.
\]
Since $(1+A)^{-2}$ is injective, $T(t)S = ST(t)$.
\end{proof}

\medskip

\subsection{A Topological Extension of the HP-Calculus}\label{sgr.s.topext}

We let, as before, $-A$ be the generator of a bounded semigroup as
above.  As in Section \ref{s.sectop} we consider the algebra
\[ \calA_A = \{ (1+A)^{-1}\}'
\]
which by Corollary \ref{sgr.c.commutator} 
coincides with the commutant of the semigroup.
Similarly to Section \ref{s.sectop}, for $((T_n)_n, T) \in \calA_A^\N\times
\calA_A$
we write
\[ T_n \stackrel{\tau_A^n}{\to} T
\]
if there is a point-separating subset $\calD \subseteq \calA_A'$ such
that 
\[    DT_n \to DT \quad \text{in operator norm, for each $D\in \calD$} 
\]
Note the difference to the structure $\tau_A^s$ considered in 
Section \ref{s.sectop}, where we allowed strong 
convergence. It is easily checked that $\tau_A^n$ is an algebraic
Hausdorff  convergence structure.

\begin{thm}\label{sgr.t.topext}
Let $-A$ be the generator of a bounded semigroup $T$ on a Banach space
$X$. Then the  Hille--Phillips calculus $\Phi_T$ is closable 
with respect to the joint convergence structure
\[  \text{\rm (pointwise convergence on $\cls{\C_+}$  ,
  $\tau_A^n$-convergence ) }
\quad \text{on}\quad   (\Ha^\infty(\cls{\C_+}) , \calA_A).
\]
 \end{thm}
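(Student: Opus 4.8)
The plan is to transport the proof of Theorem~\ref{sectop.t.main} to the present setting, taking advantage of two features that make things cleaner here: the operator-side convergence $\tau_A^n$ is \emph{norm} convergence, and the regularizer $(1+\bfz)^{-2}=\Lap(s\ue^{-s}\,\diff s)$ already belongs to the Hille--Phillips algebra $\calL\calM(\C_+)$, so that $(1+\bfz)^{-2}f_n$ is again an admissible function and no auxiliary kernel computation (as in the sectorial case) is required.

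First the reduction. Let $(f_n)_n$ in $\calL\calM(\C_+)$ satisfy $f_n\to 0$ pointwise on $\cls{\C_+}$ and $\Phi_T(f_n)\stackrel{\tau_A^n}{\to}T\in\calA_A$; I must show $T=0$. By the definition of $\tau_A^n$ there is a point-separating set $\calD\subseteq\calA_A'$ with $D\Phi_T(f_n)\to DT$ in operator norm for each $D\in\calD$. Fix $D\in\calD$. Since $(1+\bfz)^{-2}\in\calL\calM(\C_+)$ and $\Phi_T$ is multiplicative there, one has $(1+A)^{-2}\Phi_T(f_n)=\Phi_T\bigl((1+\bfz)^{-2}f_n\bigr)$; moreover $(1+A)^{-2}$ and $\Phi_T(f_n)$ lie in $\calA_A$ and hence commute with $D\in\calA_A'$. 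Therefore
\[
 D\,\Phi_T\bigl((1+\bfz)^{-2}f_n\bigr)=(1+A)^{-2}\,D\Phi_T(f_n)\longrightarrow (1+A)^{-2}DT
\]
in operator norm. The whole theorem thus reduces to the single claim that $\Phi_T\bigl((1+\bfz)^{-2}f_n\bigr)\to 0$ in operator norm: granting it, the displayed limit is $0$, whence $(1+A)^{-2}DT=0$; as $(1+A)^{-2}$ is injective (a power of a resolvent) this gives $DT=0$, and as $D\in\calD$ was arbitrary and $\calD$ is point-separating we conclude $\ran(T)\subseteq\bigcap_{D\in\calD}\ker(D)=\{0\}$, i.e. $T=0$.

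It remains to prove the norm convergence $\Phi_T\bigl((1+\bfz)^{-2}f_n\bigr)=(1+A)^{-2}\Phi_T(f_n)\to 0$, and this is where the complex inversion formula of Proposition~\ref{sgr.p.coi} is the decisive tool. The idea is to represent the regularized operator as a resolvent contour integral in which the boundary values of $f_n$ enter as the data, and then to let $n\to\infty$ by dominated convergence, using $f_n\to 0$ pointwise together with the decay supplied by $(1+\bfz)^{-2}$ and the resolvent estimate $\norm{R(z,A)}\le M/\abs{\re z}$ from \eqref{sgr.eq.sthp}. Starting from
\[
 T(s)(1+A)^{-2}=\frac{1}{2\upi\ui}\int_{\omega+\ui\R}\frac{\ue^{-sz}}{(1+z)^2}\,R(z,A)\,\diff z\qquad(\omega<0)
\]
and $\Phi_T(f_n)=\int_{\R_+}T(s)\,\mu_n(\diff s)$ for $f_n=\Lap\mu_n$, one wants to interchange the $s$-integration with the contour integral so that $\int_{\R_+}\ue^{-sz}\,\mu_n(\diff s)=f_n(z)$ appears against the weight $(1+z)^{-2}R(z,A)$. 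I expect this analytic passage to be the main obstacle: the contour on which $R(z,A)$ is available lies in the resolvent half-plane $\re z<0$, where $\abs{\ue^{-sz}}=\ue^{s\abs{\re z}}$ grows in $s$, so the naive Fubini interchange against a general $\mu_n\in\eM(\R_+)$ is not licit. Resolving this is the heart of the matter; the natural routes are either to push the contour to the imaginary axis --- controlling the resolvent there by the boundedness of $T$ and the norm-Lipschitz continuity of $s\mapsto T(s)(1+A)^{-1}$ recorded in Proposition~\ref{sgr.p.coi} --- or to argue in the time domain, where $(1+\bfz)^{-2}$ acts by convolution with the fixed $\Ell1$-kernel $s\ue^{-s}$ and the pointwise vanishing of $f_n=\Lap\mu_n$ pins down the limit via the injectivity of the Laplace transform. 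Once this norm convergence is established, the reduction above closes the proof.
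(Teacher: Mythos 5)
Your reduction is logically sound, but the single claim to which you reduce the theorem --- that $f_n\to 0$ pointwise on $\cls{\C_+}$ forces $\Phi_T\bigl((1+\bfz)^{-2}f_n\bigr)\to 0$ in operator norm --- is false, so the step you yourself flag as ``the heart of the matter'' cannot be carried out by any argument. Take for $T$ the right shift semigroup on $X=\Ell{1}(\R_+)$, so that $\Phi_T(\Lap\mu)$ is convolution by $\mu$; testing against $N^{-1}\ue^{\ui t\,\cdot}\,\car_{[0,N]}$ and letting $N\to\infty$ shows that the operator norm of convolution by $\nu\in\eM(\R_+)$ dominates $\abs{(\Lap\nu)(\ui t)}$ for every $t\in\R$. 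Now pick a smooth $\phi\ge 0$ supported in $[1,2]$ with $\int\phi=1$ and put $\mu_n(\ud{s}):=n^2\ue^{\ui ns}\phi(s)\,\ud{s}$ and $f_n:=\Lap\mu_n\in\calL\calM(\C_+)$. For each fixed $z\in\cls{\C_+}$, three integrations by parts give $\abs{f_n(z)}\le C_{\re z}\,n^2\,\abs{n-\im z}^{-3}\to 0$, so $f_n\to 0$ pointwise on $\cls{\C_+}$; nevertheless
\[
\bignorm{\Phi_T\bigl((1+\bfz)^{-2}f_n\bigr)}
\;\ge\;\Bigabs{\frac{(\Lap\mu_n)(\ui n)}{(1+\ui n)^2}}
\;=\;\frac{n^2}{1+n^2}\;\longrightarrow\;1,
\]
because $(\Lap\mu_n)(\ui n)=n^2\int\phi=n^2$. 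One checks, again by integration by parts, that $(1+A)^{-4}\Phi_T(f_n)\to 0$ in operator norm, so this sequence even satisfies all hypotheses of the theorem with $\calD=\{(1+A)^{-4}\}$ and $T=0$; hence the failure is not curable by invoking the hypothesis $D\Phi_T(f_n)\to DT$. This also explains why neither of your rescue routes can work: the contour cannot be pushed onto $\ui\R$ because $\spec(A)$ may be all of $\cls{\C_+}$ (as it is here), and in the time domain the kernel $s\ue^{-s}$ damps the high-frequency mass of $\mu_n$ only polynomially while $\norm{\mu_n}$ grows. The analogy with Theorem \ref{sectop.t.main} breaks exactly here: there, bp-convergence supplies a uniform bound and the contour lies strictly inside the sector where both $f_n$ and the resolvent are controlled; neither feature is available on $\cls{\C_+}$ for a general bounded semigroup. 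A final warning sign: your argument uses the convergence $D\Phi_T(f_n)\to DT$ only to identify a limit and would run verbatim with strong convergence, so it would prove the $\tau_A^s$-version of the theorem --- which the paper states explicitly is an open problem.

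The paper's proof instead couples the two hypotheses at the scalar level, via Gelfand theory in the commutative unital Banach algebra $\calA_A'$, rather than at the operator-norm level. For a character $\chi$ of $\calA_A'$ with $\alpha:=\chi((1+A)^{-1})\neq 0$, the norm continuity of $t\mapsto T(t)(1+A)^{-1}$ (Proposition \ref{sgr.p.coi}) together with Cauchy's functional equation yields $\chi(T(t))=\ue^{-\lambda t}$ for some $\lambda\in\cls{\C_+}$, and applying $\chi$ to the HP-integral gives
\[
\chi\bigl(D\Phi_T(f_n)(1+A)^{-1}\bigr)=\alpha\,\chi(D)\,f_n(\lambda).
\]
Since characters are norm continuous --- but not strongly continuous, which is precisely where $\tau_A^n$ is irreplaceable --- the left-hand side tends to $\chi\bigl(DT(1+A)^{-1}\bigr)$, while the right-hand side tends to $0$ by pointwise convergence at the single point $\lambda$. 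Point-separation of $\calD$ and injectivity of $(1+A)^{-1}$ then give $T=0$. The moral is that the pointwise hypothesis can be exploited only at distinguished points produced by characters, not integrated against a contour.
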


\begin{proof}
Suppose that $f_n = \Lap\mu_n \in \calL\calM(\C_+)$ is pointwise convergent
on $\cls{\C_+}$ to $0$ and that $\calD \subseteq \calA_A'$ is a
point-separating subset of $\calA_A$ such that  
\[ D\Phi_T(f_n) \to DT
\]
in operator norm. It suffices to show that $(1+A)^{-1}DT=0$. 

To this aim, note that $\calA_A'$ is a commutative unital Banach
algebra. Since $\Phi_T(f_n) \in \calA_A'$, we have also $DT \in
\calA_A'$.
Hence, by Gelfand theory, it suffices to show that 
\[ \chi( (1+A)^{-1}DT) = 0
\]
for each multiplicative linear functional $\chi: \calA_A'\to
\C$. Fix such a functional $\chi$. Since
\[ \chi( (1+A)^{-1}DT)  = \chi( (1+A)^{-1}) \chi(DT)  
\]
we may suppose without loss of generality
that $\alpha := \chi((1+A)^{-1}) \neq 0$.

Consider the function $c: \R_+ \to \C$, $c(t) := \chi( T(t))$. Then
\[  c(t+s) = c(t) c(s)\qquad (t,s \ge 0)
\]
and $c$ is bounded. Moreover, 
\[ t \mapsto \alpha c(t) = \chi( T(t)(1+A)^{-1})
\]
is continuous (by Proposition \ref{sgr.p.coi}).  Since $\alpha \neq 0$, 
$c$ is continuous. It follows from a classical theorem of Cauchy that
there is $\lambda \in \cls{\C_+}$ such that 
\[ c(t) = \ue^{-\lambda t} \qquad (t\ge 0).
\]
Next, we find  
\[ \Phi(f_n)(1+A)^{-1} = \int_{\R_+} T(t)(1+A)^{-1}\, \mu_n(\ud{t}).
\]
By Proposition \ref{sgr.p.coi}, the integrand is a norm-continuous function
of $t$. Hence,
\begin{align*}
 \chi( D\Phi_T(f_n)(1+A)^{-1}) & = 
\chi(D) \int_{\R_+} \chi( T(t)(1+A)^{-1}) \, \mu_n(\ud{t})
\\ & = \alpha \chi(D) \int_{\R_+} \ue^{-\lambda t} \, \mu_n(t)
= \alpha \chi(D) f_n(\lambda).
\end{align*}
Letting $n \to \infty$ yields
\[  \chi((1+A)^{-1}DT) = 0
\]
as desired. (It is in this last step that we need the operator norm
convergence $D\Phi_T(f_n)\to DT$.)
\end{proof}

\begin{rem}
It is not difficult to see that the spectra  of $(1+A)^{-1}$  in
$\calA_A'$ and in $\BL(X)$ coincide. It follows from Gelfand theory that 
\[ \spec((1+A)^{-1}) =\{ \chi((1+A)^{-1}) \suchthat  0 \neq \chi 
\,\, \text{is a multiplicative functional on $\calA_A'$}\}.
\]
This implies, eventually, that if $\alpha := \chi( (1+A)^{-1}) \neq 0$ then 
$\chi(T(t)) = \ue^{-\lambda t}$, where
$(1 + \lambda)^{-1} = \alpha \in \spec( (1+A)^{-1})$, and
hence $\lambda \in \spec(A)$ by the spectral mapping theorem for the
resolvent. All in all we obtain that we can replace pointwise
convergence on $\cls{\C_+}$ by pointwise convergence on $\spec(A)$ in 
Theorem \ref{sgr.t.topext}.  (These arguments actually show that 
the failing of the spectral mapping theorem for the semigroup is
precisely due to the existence of multiplicative functionals $\chi$ on
$\calA_A'$ that  vanish on $(1+A)^{-1}$ but
do not vanish on some $T(t)$. However, these functionals are irrelevant in
our context.) 
\end{rem}

According to Theorem \ref{sgr.t.topext}, the Hille--Phillips calculus $\Phi_T$
has a topological extension based on the joint convergence structure
\[  \text{\rm (pointwise convergence on $\cls{\C_+}$  ,
  $\tau_A^n$-convergence ) }
\quad \text{on}\quad   (\Ha^\infty(\cls{\C_+}) , \calA_A).
\]
Let us call this the {\emdf semi-uniform extension} of the HP-calculus.
We do not know whether one can replace $\tau_A^n$ by $\tau_A^s$ here
in general. However, there are special cases, when it is possible.

\medskip

\subsection{Compatibility of the HP-Calculus and the Sectorial Calculus}
\label{sgr.s.sec-sgr}

By \eqref{sgr.eq.sthp}, the negative generator $A$ of the bounded
semigroup $T$  is sectorial of angle $\omega_{\sct}(A) \le
\frac{\upi}{2}$. 
Hence, there are now two competing functional calculi for it, the
Hille--Phillips calculus $\Phi_T$ and the sectorial calculus $\Phi_A$, 
each coming with its associated algebraic and topological extensions.
Of course, we expect  compatibility, so let us have a closer look.

\medskip
Suppose first that $\omega_{\sct}(A) < \frac{\upi}{2}$. Then 
by Proposition \ref{hir.p.hol}, the Hille--Phillips calculus
is a restriction of the uniform extension of the elementary sectorial
calculus for $A$. By commutativity, compatibility is still valid for the
respective algebraic extensions, that is: the extended Hille--Phillips
calculus is a subcalculus of $\Phi_A^\uni$.

Now, suppose that $\omega_{\sct}(A) = \frac{\upi}{2}$. 
It has been shown in \cite[Lemma 3.3.1]{HaaseFC} that each $e \in
\calE[\sector{\frac{\upi}{2}}]$ is contained in $\calL\calM(\C_+)$ with 
\[ \Phi_T(e) = \Phi_A(e)
\]
(The actual formulation of \cite[Lemma 3.3.1]{HaaseFC} yields a little less, but
its proof works in the more general situation considered here.) 
It follows that 
\[  \calE_e[\sector{\frac{\upi}{2}}] \subseteq \calL\calM(\C_+)\quad
\text{and}\quad \Phi_T = \Phi_A\,\,
\text{on}\,\, \calE_e[\sector{\frac{\upi}{2}}].
\]
By commutativity of the algebras, the algebraic extensions of these
calculi also are compatible (Theorem \ref{ext.t.succ-comp}). That is, the
(algebraically) extended
Hille--Phillips calculus is an extension of the sectorial calculus for
$A$. Furthermore, the uniform extension of the sectorial calculus
is clearly contained in the semi-uniform extension of the
Hille--Phillips calculus as described above.

\begin{rems}
\begin{aufziii}
\item  The bounded Stieltjes algebra is actually included in the
Hille--Phillips
algebra. This can be seen by a direct computation. More generally,
each function $f\in \Mer[\sector{\frac{\upi}{2}}]$ such that 
$\Phi_A(f)$ is bounded for {\em each} negative generator of a bounded
semigroup, is contained in $\calL\calM(\C_+)$. (Choose $T$ to be the 
right  semigroup on $\Ell{1}(\R_+)$.) 

\item  At present, we do not know how the bp-extension of the sectorial
calculus and the semi-uniform extension of the HP-calculus relate. 

\end{aufziii}
\end{rems}

\medskip
On the other hand, we can ``reach'' the HP-calculus 
from the sectorial calculus by employing a modification of the
uniform extension. Namely, consider the joint convergence structure
\beq\label{sgr.eq.joint-uni} 
\text{\rm ( uniform convergence on $\C_+$ , operator norm convergence  ) 
}
\eeq
on $\calL\calM(\C_+)$ and $\calA_A$, respectively. 
By compatibility and Theorem \ref{sgr.t.topext}, the sectorial calculus
on $\calE_e[\sector{\frac{\upi}{2}}]$ is closable with respect to that
structure. The next result shows that the functions
\[  \frac{\ue^{-t\bfz}}{(1+ \bfz)^2} \qquad (t >  0)
\]
are in the domain of the corresponding topological extension.

\begin{lem}\label{sgr.l.coi-approx}
Let $-A$ be the generator of a bounded semigroup $T$, and let 
$t > 0$ and $\omega < 0$.
For any  $n\in \N$ the function
\[ f_n(z) := \frac{1}{2\upi \ui} \int_{\omega + \ui[-n,n]} 
\frac{\ue^{-wt}}{(1+w)^2} \frac{\ud{w}}{w-z}
\]
is contained in $\calE_e[\sector{\frac{\upi}{2}}]$. Moreover, 
\[ f_n \to \frac{\ue^{-t\bfz}}{(1+ \bfz)^2}\quad (n \to \infty) 
\]
uniformly on $\cls{\C_+}$ and 
\[  \Phi_A(f_n) \to   T(t)(1+A)^{-2}
\]
in operator norm. 
\end{lem}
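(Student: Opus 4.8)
The plan is to recognize $f_n$ as a truncation of the Cauchy integral that represents $\ue^{-t\bfz}(1+\bfz)^{-2}$, and then to transport this representation through the sectorial calculus by Fubini's theorem, using the complex inversion formula (Proposition \ref{sgr.p.coi}) to identify the limit. Throughout I write $g(w) := \ue^{-wt}(1+w)^{-2}$ and take $-1 < \omega < 0$, so that $g$ is holomorphic to the right of the line $\re w = \omega$ and the contour avoids the pole of $g$ at $w=-1$; the truncated contour $\omega + \ui[-n,n]$ is oriented top-down, matching Proposition \ref{sgr.p.coi}.

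For the membership statement, I first analyse $f_n$ as a function of $z$. Since the contour is a compact segment in the open left half-plane, $f_n$ is holomorphic on $\C\setminus(\omega+\ui[-n,n])$, and bounding the distance from $z$ to the segment shows that $f_n$ is bounded and holomorphic on $\sector{\omega'}$ whenever $\pi/2 < \omega' < \pi - \arctan(n/|\omega|)$; such $\omega' > \pi/2$ always exists. Expanding $(w-z)^{-1}$ at $z=\infty$ gives $f_n(z) = O(1/z)$ as $z\to\infty$, while $f_n(0) = \frac{1}{2\pi\ui}\int_{\omega+\ui[-n,n]} g(w)\,\ud w/w$ is finite. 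Setting $e_n := f_n - f_n(0)(1+\bfz)^{-1}$, one checks that $e_n$ vanishes like $z$ at $0$ and like $1/z$ at $\infty$, so $e_n\in\calE(\sector{\omega'})$ and hence $f_n = e_n + f_n(0)(1+\bfz)^{-1}\in\calE_e(\sector{\omega'})\subseteq\calE_e[\sector{\pi/2}]$.

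For the uniform convergence, closing the contour to the right (where $g$ is holomorphic and decays like $|w|^{-2}\ue^{-t\re w}$) yields by Cauchy's formula the full-line representation $\ue^{-tz}(1+z)^{-2} = \frac{1}{2\pi\ui}\int_{\omega+\ui\R} g(w)\,\ud w/(w-z)$ for $z\in\cls{\C_+}$. The difference $f_n - \ue^{-t\bfz}(1+\bfz)^{-2}$ is then the tail over $|\im w|>n$; using $|g(\omega+\ui s)|\le \ue^{|\omega|t}s^{-2}$ and $|w-z|\ge \re z - \omega \ge |\omega|$ for $z\in\cls{\C_+}$, this tail is $O(1/n)$ uniformly in $z$, which gives uniform convergence on $\cls{\C_+}$. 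For the operator statement I would show first that
\[ \Phi_A(f_n) = \frac{1}{2\pi\ui}\int_{\omega+\ui[-n,n]} g(w)\, R(w,A)\,\ud w. \]
Writing $\Phi_A(f_n) = \Phi_A(e_n) + f_n(0)(1+A)^{-1}$ and inserting the integral formula \eqref{sec.eq.def} for $\Phi_A(e_n)$ over a sectorial contour $\rand{\sector{\delta}}$ with $\omega_\sct(A)<\delta<\omega'$, I substitute the representation of $e_n$, whose kernel is $\zeta(1+w)\big/\big(w(w-\zeta)(1+\zeta)\big)$, and apply Fubini; absolute integrability holds because the segment is compact, $g$ is bounded on it, and the kernel times $\norm{R(\zeta,A)}\le C/|\zeta|$ is integrable over $\rand{\sector{\delta}}$ uniformly in $w$. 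Since $\Phi_A\big((w-\bfz)^{-1} - \tfrac{1}{w}(1+\bfz)^{-1}\big) = R(w,A) - \tfrac{1}{w}(1+A)^{-1}$, the correction term cancels and the displayed identity follows. Finally, $\norm{R(w,A)}\le M/|\omega|$ from \eqref{sgr.eq.sthp} together with $|g(\omega+\ui s)|\le \ue^{|\omega|t}s^{-2}$ shows that the truncated operator integral converges in operator norm to the full-line integral, which equals $T(t)(1+A)^{-2}$ by Proposition \ref{sgr.p.coi}.

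The main obstacle is the Fubini interchange in the last step: justifying that $\Phi_A$ may be pulled inside the contour integral. The delicate points are that $f_n\notin\calE$, so one must carry the explicit $(1+\bfz)^{-1}$ correction through the computation and verify that it cancels against $\tfrac{1}{w}(1+A)^{-1}$ after integration, and that the orientation of the truncated contour must agree with the top-down orientation of the complex inversion formula. Everything else reduces to routine contour estimates once the decomposition $f_n = e_n + f_n(0)(1+\bfz)^{-1}$ is established.
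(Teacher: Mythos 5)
Your proof is correct and takes essentially the same route as the paper's, which is only a sketch: decay analysis at $0$ and $\infty$ for membership in $\calE_e$, the identity $\Phi_A(f_n)=\frac{1}{2\pi\ui}\int_{\omega+\ui[-n,n]}\ue^{-wt}(1+w)^{-2}R(w,A)\,\ud{w}$ dismissed as "standard arguments", and the complex inversion formula for the limits; your Fubini computation with the kernel $\zeta(1+w)/\bigl(w(w-\zeta)(1+\zeta)\bigr)$ supplies exactly those omitted arguments. Note also that your restriction to $-1<\omega<0$ is in fact necessary (for $\omega<-1$ the pole of $(1+w)^{-2}$ at $w=-1$ lies to the right of the contour, so closing the contour picks up a residue and the stated limit fails), a point the paper's blanket "$\omega<0$" glosses over.
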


\begin{proof}
Note that $f_n$ is holomorphic on 
$\C \ohne (\omega {+}\ui[-n,n])$ and hence on a sector
$\sector{\vphi}$ for $\vphi > \upi/2$. On each smaller sector
we have $f_n(z) = O(\abs{z}^{-1})$ as $\abs{z} \to \infty$ and 
$f_n(z) -  f_n(0) = O(\abs{z})$ as $\abs{z} \to 0$. It follows
that $f_n \in \calE_e(\sector{\vphi})$.

The remaining statements follow from the complex inversion formula.
One needs the identity
\[ \Phi_A(f_n) = \frac{1}{2\upi \ui} \int_{\omega + \ui[-n,n]} 
\frac{\ue^{-wt}}{(1+w)^2}\, R(w,A)\ud{w}, 
\]
which is proved by standard arguments.
\end{proof}

Since $T(t)$ can be reconstructed algebraically from $T(t)
(1+A)^{-2}$, we see that the semigroup operators are contained
in the algebraic extension of the topological extension given by
\eqref{sgr.eq.joint-uni} of the elementary sectorial calculus.

\section{Normal Operators}\label{s.spt}

Normal operators on Hilbert spaces are known, by the spectral theorem,
to have the best functional calculus one can hope for. The
(Borel) functional calculus for a normal operator is heavily
used in many areas of mathematics and mathematical
physics. Despite this importance of the functional calculus, 
the spectral theorem is most frequently
formulated in terms of projection-valued measures or multiplication operators,
and the functional calculus itself appears merely as a derived concept.  

This expositional dependence 
(of the functional calculus on the spectral measure)
is manifest in  the classical extension 
of the calculus from bounded to unbounded
functions as described, e.g.,  in Rudin's book \cite{RudinFA}. 
Since the description of $f(A)$ for unbounded $f$ in terms of spectral
measures is far from simple, working with the unbounded part
of the calculus on the basis of this exposition is rather cumbersome.

However, the situation now is different from when Rudin's classic text
was written, in at least two respects. Firstly, we now have an axiomatic notion
of a functional calculus (beyond bounded operators in its
range). This enables us to develop the properties of the calculus
from axioms rather than from a particular construction, which makes
things far more perspicuous and, eventually, far easier to handle.

\vanish{
In the classical approach, assertions like (FC2)
appear as consequences of the construction (cf.{ }\cite[13.24]{RudinFA}). 
Consequences of (FC2)  are itself not acknowledged as such, and hence remain 
dependent on the construction as well. In contrast, we can now develop
the theory of Borel functional calculus from a minimalistic axiom
system and integrate the construction steps of such a functional calculus
into the theory as separate theorems.
}

Secondly, we now have an elegant tool to go from bounded to unbounded
functions: the algebraic extension procedure. As a result, the
unbounded part of the construction of the functional calculus for a
normal operator on a Hilbert space just becomes a corollary of
Theorem \ref{ext.t.ext}. Actually,  all algebras in this context are
commutative and there is always an anchor element, so one does not even need
the full force of Theorem \ref{ext.t.ext}, but only the relatively elementary methods
of \cite{HaaseFC}. 

In order to render these remarks less cryptic, we need of course 
be more specific. We shall sketch the main features below.  A
more detailed treatment can be found in the separate paper
\cite{Haase2020bpre}.

\medskip

Let $(X,\Sigma)$ be a {\em measurable space},
i.e.,  $X$ is a set and  $\Sigma$ is a  $\sigma$-algebra of subsets of
$X$. We let
\begin{align*} \Meas(X,\Sigma) & := \{ f: X \to \C \suchthat \text{$f$ measurable}\}.
\end{align*}
A {\emdf measurable (functional) calculus}  
on $(X,\Sigma)$ is a pair
$(\Phi, H)$ where $H$ is a Hilbert space and 
\[ \Phi: \Meas(X,\Sigma)\to \Clo(H)
\]
is a mapping with the following properties ($f,\: g \in \Meas(X,\Sigma),\: \lambda \in \C$):
\begin{aufziii}
\item[\quad(MFC1)] $\Phi(\car) = \Id$;
\item[\quad(MFC2)] $\Phi(f) + \Phi(g) \subseteq \Phi(f+g)$ and 
$\lambda \Phi(f) \subseteq \Phi(\lambda f)$;
\item[\quad (MFC3)] $\Phi(f)\Phi(g) \subseteq \Phi(fg)$\quad  and 
\[ \dom(\Phi(f)\Phi(g)) = \dom(\Phi(g))\cap \dom(\Phi(fg));
\]
\item[\quad(MFC4)] $\Phi(f) \in \BL(H)$  and $\Phi(f)^* =
  \Phi(\konj{f})$  if $f$ is bounded;
\item[\quad(MFC5)]  If $f_n \to f$ pointwise and boundedly, then
  $\Phi(f_n) \to \Phi(f)$ weakly.  
\end{aufziii}
Property (MFC5) is called the {\emdf weak bp-continuity} of the mapping
$\Phi$. 

\medskip
Evidently, (MFC1)--(MFC3) are just the axioms (FC1)--(FC3) of a
proto-calculus. For  $f\in \Meas(X, \Sigma)$ let
\[ e := \frac{1}{1 + \abs{f}}
\]
Then $e$ is a bounded function and $ef$ is also bounded. 
Hence, by (MFC4), $e$ is a regularizer of $f$. 
Moreover, $\Phi(e^{-1})$ is defined, and hence $\Phi(e^{-1}) = \Phi(e)^{-1}$
(Theorem \ref{afc.t.pro-cal}). It follows that
\[ \Phi(f) = \Phi(e^{-1} e f) = \Phi(e)^{-1} \Phi(ef),
\]
which just means that the set $\{ e\}$ is determining for $\Phi(f)$. 
This show that 
\[ \calE := \{ e\in \Meas(X, \Sigma) \suchthat \text{$e$ is
  bounded}\}
\]
is an algebraic core for $\Phi$. 
(In particular, $\Phi$ satisfies (FC4) and hence is a calculus.) 

As a result, each measurable calculus
coincides with the algebraic extension of its restriction
to the bounded functions. To construct a measurable calculus,
it therefore suffices to construct a calculus on the bounded measurable
functions and then apply the algebraic extension procedure. And this
is a far simpler method than employing spectral measures.

\medskip

It is remarkable (and very practical) that only (MFC1)--(MC5) are
needed to establish all the well-known properties of the Borel
calculus for normal operators. For example, one can prove
that the identity
\[ \Phi(\konj{f}) = \Phi(f)^*
\]
holds for each $f\in \Meas(X, \Sigma)$, and not just for bounded
functions as guaranteed by (MFC4). Next, observe that 
for given $f,g$ the 
sequence of functions
\[ e_n := \frac{n}{n+\abs{f} + \abs{g}}
\]
form a common  approximate identity for $f$ and $g$. (This is actually
a strong approximate identity, since strong convergence in (MFC5)  
holds automatically.) By Theorem \ref{api.t.api} we obtain  
\[ \cls{\Phi(f) + \Phi(g)} = \Phi(f+g),\qquad \cls{\Phi(f)\Phi(g)}
= \Phi(fg).
\]
One of the most important results in this abstract development
of measurable calculi concerns uniqueness. We only cite a 
corollary of a more general theorem:

\begin{thm}
Let $X\subseteq \C^d$, endowed with the trace $\sigma$-algebra
of the Borel algebra.
Let $(\Phi, H)$ and $(\Psi,H)$ be two measurable calculi on $X$
such that
\[ \Phi(\bfz_j) = \Psi(\bfz_j) \quad (j=1, \dots, d).
\]
Then $\Phi = \Psi$. 
\end{thm}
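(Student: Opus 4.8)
The plan is to reduce the claim to agreement on bounded functions, and then to propagate agreement from the coordinate functions to all bounded Borel functions by a functional monotone-class argument. Recall that the set $\calE$ of bounded measurable functions is an algebraic core for both $\Phi$ and $\Psi$ (as shown just before the theorem); hence by Lemma \ref{urc.l.uni} it suffices to prove $\Phi = \Psi$ on $\calE$.

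Write $A_j := \Phi(\bfz_j) = \Psi(\bfz_j)$. Since $\Phi(\konj f) = \Phi(f)^*$ holds for every $f$, one also has $\Phi(\konj{\bfz_j}) = A_j^* = \Psi(\konj{\bfz_j})$, and since $\bfz_j$ and $\konj{\bfz_j}$ admit a common strong approximate identity (e.g. $\tfrac{n}{n+\abs{\bfz_j}}$), Theorem \ref{api.t.api} gives $\Phi(\bfz_j\konj{\bfz_j}) = \overline{A_j^{*}A_j} = \Psi(\bfz_j\konj{\bfz_j})$. Consequently the bounded regularizers $e_j := (1 + \bfz_j\konj{\bfz_j})^{-1}$ and $g_j := \bfz_j\, e_j$ fall into agreement: by Theorem \ref{afc.t.pro-cal}(b),(c) one has $\Phi(e_j) = (\Id + \overline{A_j^{*}A_j})^{-1} = \Psi(e_j)$, and via (FC3) together with boundedness $\Phi(g_j)$ is the bounded operator $\overline{A_j\,\Phi(e_j)}$, again common to both calculi. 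I deliberately avoid the naive resolvents $(\lambda - \bfz_j)^{-1}$ here, because $X$ may be unbounded and $A_j$ may then have empty resolvent set; the functions $e_j,g_j$ are bounded on all of $X$ regardless.

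Now set $\calG := \{ f \in \calE \suchthat \Phi(f) = \Psi(f)\}$. It is a conjugation-closed unital subalgebra of $\calE$: sums and products are handled by Theorem \ref{afc.t.pro-cal}(d), conjugation by (MFC4), and $\car \in \calG$ by (MFC1). Crucially $\calG$ is closed under bounded pointwise sequential limits, since if $f_n \in \calG$ and $f_n \to f$ pointwise and boundedly then (MFC5) forces $\Phi(f_n)\to\Phi(f)$ and $\Psi(f_n)\to\Psi(f)$ weakly, whence the common weak limit gives $\Phi(f)=\Psi(f)$. By the previous paragraph $\calG$ contains the self-adjoint multiplicative system generated by $\{ e_j, g_j, \konj{g_j} \suchthat 1 \le j \le d\}$; this system separates the points of $X$ (one recovers $z_j = g_j/e_j$) and hence generates the Borel $\sigma$-algebra of $X$. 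The complex functional monotone-class theorem then yields that $\calG$ contains every bounded Borel function, i.e. $\calG = \calE$, and with the first paragraph this gives $\Phi = \Psi$.

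The principal obstacle is the last step, the passage from the generating bounded functions to all of $\calE$: this is exactly where (MFC5) is indispensable, since it is what makes $\calG$ closed under bp-limits and thus amenable to the monotone-class theorem. The supporting identities of the second paragraph — especially the evaluation of $\Phi(\bfz_j\konj{\bfz_j})$ through Theorem \ref{api.t.api} — are the technical heart that lets one produce bounded members of $\calG$ without assuming that $A_j$ has any resolvent points.
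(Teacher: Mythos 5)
Your proof is correct. A preliminary remark: this paper does not actually contain a proof of the theorem --- it is explicitly cited as a corollary of a more general uniqueness result established in the companion paper \cite{Haase2020bpre} --- so there is no in-paper argument to compare against. Your argument is a legitimate, essentially self-contained proof built from this paper's own toolkit: reduction to the bounded part via the algebraic-core property of $\calE$ and Lemma \ref{urc.l.uni}; transfer of the hypothesis from $\bfz_j$ to the bounded functions $e_j=(1+\bfz_j\konj{\bfz_j})^{-1}$ and $g_j=\bfz_j e_j$ via Theorem \ref{afc.t.pro-cal} and Theorem \ref{api.t.api} (your observation that this avoids any appeal to resolvents of $A_j$ is exactly the right move, since no function $(\lambda-\bfz_j)^{-1}$ need be bounded on $X$); and bp-sequential closedness of the agreement set $\calG$ via (MFC5), finished off by the multiplicative-system (monotone-class) theorem.

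Two caveats. First, your identification $\Phi(\konj{\bfz_j})=A_j^{*}=\Psi(\konj{\bfz_j})$ rests on the identity $\Phi(\konj f)=\Phi(f)^{*}$ for \emph{unbounded} $f$; axiom (MFC4) gives it only for bounded $f$. The paper asserts the unbounded version (``one can prove that \dots'') but defers its proof to \cite{Haase2020bpre}, so your argument is complete only modulo that asserted fact. This is a legitimate citation within the paper's framework, and the identity is indeed provable directly from (MFC1)--(MFC5) by regularization, independently of any uniqueness statement, so there is no circularity --- but the dependence should be flagged. Second, the phrase ``this system separates the points of $X$ and hence generates the Borel $\sigma$-algebra'' is not a valid implication as stated: point separation by a countable family does not in general imply generation of the Borel $\sigma$-algebra. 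The correct justification is precisely the one in your parenthesis: since $e_j$ vanishes nowhere and $\bfz_j=g_j/e_j$ pointwise on $X$, each coordinate function is measurable with respect to the $\sigma$-algebra generated by $\{e_j,g_j \suchthat 1\le j\le d\}$, and hence that $\sigma$-algebra contains (and therefore equals) the trace Borel $\sigma$-algebra. With the argument phrased that way, the monotone-class step, and with it the whole proof, goes through.
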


This theorem implies, e.g., the composition rule 
\[ (f\nach g)(A) = f( g(A))
\]
for a normal operator $A$ on a Hilbert space $H$, since
both mappings 
\[ \Phi(f) := (f\nach g)(A) \quad \text{and}\quad \Psi(f) := f(g(A))
\]
are Borel calculi on $\C$ that agree for $f = \bfz$. 

\medskip

For more about the functional calculus approach to the spectral
theorem we refer to \cite{Haase2020bpre}.

\medskip

\vanish{
\section{Construction of Functional Calculi by Gelfand Theory}

Let $X$ be a Banach space and let $A$ a closed operator on $X$
with nonempty resolvent set $\resolv(A) \neq \leer$.
We let
\[ \calA_A := \cls{\mathrm{alg}}\{ R(\lambda,A) \suchthat \lambda \in
\resolv(A)\}
\]
the smallest norm-closed unital subalgebra of $\BL(X)$ that contains
all resolvents of $A$. 

The {\emdf commutant algebra} of $A$ is 
\[ \calC_A := \{ T \in \BL(X) \suchthat TA \subseteq AT\}.
\]
It is a unital subalgebra of $\BL(X)$ that contains all
resolvent operators $R(\lambda,A)$, $\lambda \in \resolv(A)$.
It is closed in the strong operator topology. By 
\cite{...}, $\calC_A = \calA_A'$, the commutant algebra of $\calA_A$.

The {\emdf double commutant  algebra}
of $A$ is the commutant algebra
of $\calC_A$, i.e.,  
\[ \calB_A := \calC_A' = \{ S\in \BL(X) \suchthat ST = TS
\,\,\text{for all $T\in \calC_A$}\} = \calA_A''.
\]
This is a strongly closed, commutative unital subalgebra of $\BL(X)$,
containing all resolvents of $A$. Moreover, $\calB_A = \calB_A'$.

As $\calA_A$ is commutative, Gelfand theory applies. We let
\[ \Gamma_A := \Gamma(\calA_A) := \{ \gamma : \calA_A \to \C \suchthat 
0 \neq \gamma \,\,\text{is a character}\}
\]
be the {\emdf Gelfand space} of non-zero characters (= multiplicative
linear functionals) on $\calA_A$. It is a  
subset of the dual unit ball of $\calA_A$, compact in the weak$^*$
topology.  The associated {\emdf Gelfand map} is
\[ \Psi : \calA_A \to \Ce(\Gamma_A), \quad T \mapsto (\gamma \mapsto
\gamma(T)).
\]
It is continuous and norm-decreasing, and one has
\[  \{ \gamma(T) \suchthat \gamma \in \Gamma_A \} = \spec_{\calA_A}(T),
\]
the spectrum of $T$ with respect to the algebra $\calA_A$.

}

\appendix

\section{The  Closed Graph Theorem for the Weak$ ^*$-Topology}\label{app.cgt}

In our investigations on the dual calculus,
the following theorem   is needed.

\begin{thm}\label{sal.t.cgt}
Let $X,Y$ be  Banach spaces and $T \in \BL(X',Y')$ such that $T$ has a
closed graph with respect to the weak$^*$ topologies. Then $T$ is continuous
with respect to the weak$^*$ topologies and hence of the form
$T= S'$ for some $S\in \BL(Y;X)$. 
\end{thm}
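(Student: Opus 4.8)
The plan is to prove the two assertions in order: first that $T$ is weak$^*$-to-weak$^*$ continuous, and then observe that the stated representation $T = S'$ is merely a reformulation of this continuity. Recall that a linear map $T\colon X' \to Y'$ is weak$^*$-to-weak$^*$ continuous if and only if $T = S'$ for some $S \in \BL(Y;X)$: indeed, given continuity, for each $y\in Y$ the functional $\phi_y\colon X' \to \C$, $\phi_y(x') := \dprod{Tx'}{y}$, is weak$^*$ continuous, hence represented by an element $Sy \in X$; the map $S$ is clearly linear, it is bounded because $\norm{Sy} = \sup_{\norm{x'}\le 1}\abs{\dprod{Tx'}{y}} \le \norm{T}\,\norm{y}$, and $\dprod{Tx'}{y} = \dprod{x'}{Sy} = \dprod{S'x'}{y}$ for all $x',y$ gives $T = S'$. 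So the whole proof reduces to showing that each $\phi_y$ is weak$^*$ continuous on $X'$.

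The first main step is to establish continuity \emph{on balls}. Fix $r > 0$ and write $B := \{x'\in X' : \norm{x'}\le r\}$. Since $T$ is norm-bounded, $T(B)$ is contained in the weak$^*$-compact ball $C := \{y'\in Y' : \norm{y'}\le r\norm{T}\}$ (Banach--Alaoglu), and $B\times C$ is weak$^*$-compact by Tychonoff. By hypothesis the graph $G = \{(x',Tx') : x'\in X'\}$ is closed in the product weak$^*$ topology, so $G \cap (B\times C) = \{(x',Tx') : x'\in B\}$ is a weak$^*$-compact subset of $B \times C$. The first-coordinate projection restricts to a continuous bijection from this compact set onto the Hausdorff space $B$, hence is a homeomorphism; composing its inverse $x'\mapsto(x',Tx')$ with the second projection shows that $T|_B$ is weak$^*$-to-weak$^*$ continuous. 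This is where boundedness of $T$ is genuinely used: without it $T(B)$ need not be relatively weak$^*$-compact and the intersection argument collapses. Consequently $\phi_y|_B = \dprod{T(\cdot)}{y}$ is weak$^*$ continuous on $B$ for every $r>0$ and every $y\in Y$.

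The final step is to globalize from balls to all of $X'$, and this is the main obstacle, requiring the Krein--Smulian theorem. Fix $y\in Y$. For every $r>0$ the set $\ker\phi_y \cap \{x' : \norm{x'}\le r\}$ is the preimage of $\{0\}$ under the weak$^*$-continuous map $\phi_y$ restricted to the weak$^*$-compact ball $\{x':\norm{x'}\le r\}$, hence is weak$^*$ closed. By the Krein--Smulian theorem (a convex set meeting every closed ball in a weak$^*$-closed set is itself weak$^*$ closed), the subspace $\ker\phi_y$ is weak$^*$ closed in $X'$. A linear functional with closed kernel is continuous for the ambient topology, so $\phi_y$ is weak$^*$ continuous, i.e.\ $\phi_y \in X$. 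As this holds for every $y\in Y$, the reformulation of the first paragraph yields $S\in\BL(Y;X)$ with $T = S'$; in particular $T$ is weak$^*$-to-weak$^*$ continuous, completing the proof.
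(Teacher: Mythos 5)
Your proof is correct and follows essentially the same route as the paper's: restrict $T$ to closed balls, use Banach--Alaoglu compactness together with the closed graph hypothesis to obtain weak$^*$ continuity there, and then upgrade the ball-continuity of the functionals $x'\mapsto \langle Tx',y\rangle$ to genuine weak$^*$ continuity, i.e.\ membership in $X$. The only difference is that you prove inline the two facts the paper merely cites: the closed-graph criterion for maps between compact Hausdorff spaces (via the projection-homeomorphism argument, in place of the reference to Munkres) and the classical result that a functional weak$^*$ continuous on balls lies in $X$ (via Krein--Smulian, in place of the reference to Str\u{a}til\u{a}--Zsid\'o).
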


Theorem \ref{sal.t.cgt} is actually  a special case of much
more general results about operators between certain topological
vector spaces. One of the earliest references for it is \cite[Theorem
1]{McIntosh1969}. A more explicit version for Fr\'echet spaces is
 \cite[p.79]{KoetheTVS2}, with the caveat that one really needs 
a closed graph (rather than just a sequentially closed graph) to make
the proof work. Again less explicit, Theorem \ref{sal.t.cgt}
is a special case of the results in  \cite{Ruess1977}.
(I am indebted to Wolfgang Ruess for these bibliographical remarks.)

\medskip
However, as all these references rely on expert knowledge
in the field of topological vector spaces, we include
a short ad hoc proof for the convenience of the reader.

\begin{proof}[Proof of Theorem \ref{sal.t.cgt}]
Without loss of generality, we may suppose that $\norm{T}\le 1$. 
Let $K := \Ball_{X'}[0,1]$ and $L := \Ball_{Y'}[0,1]$ be the closed
unit balls of $X'$
and $Y'$, respectively. Then $K, L$ are compact with respect to the
weak$^*$ topologies. Clearly, $T(K) \subseteq L$, and 
by hypothesis, $T\res{K} : K \to L$ has a closed graph. Hence,
$T\res{K}$ is continuous \cite[\S26, Ex.8]{MunkresTop}.
 This implies that for each $y\in Y$ the element
$T'y$ of $X''$ is weak$^*$ continuous on $K$, so is an element of $X$
by a classical result (see
\cite[1.2]{StratilaZsidoLvNA} for a simple proof).
This means that 
$T'$ maps $Y$ into $X$ and with  $S := T\res{Y}$ the claim follows.
\end{proof}


\def\cprime{$'$} \def\cprime{$'$} \def\cprime{$'$} \def\cprime{$'$}

\medskip
\subsection*{Acknowledgements}

In preliminary form, parts of this work have appeared in 
 the lecture notes to the
21st International Internet Seminar on ``Functional Calculus'' 
during the academic year 2017/2018. I am  indebted to the participating
students  and  colleagues  for valuable remarks and
discussions. 

I also want to thank my colleague and friend Yuri Tomilov for
showing interest in this work and for some helpful comments.

This work was completed while I was spending a research
sabbatical at UNSW in Sydney. I am  grateful to
Fedor Sukochev for his kind invitation. Moreover, 
I  gratefully acknowledge the financial support from the DFG,
 project number 431663331.
\end{document}